\newtheorem{thm}{Theorem}[section]
\newtheorem{cor}[thm]{Corollary}
\newtheorem{lem}[thm]{Lemma}
\newtheorem{prop}[thm]{Proposition}
\theoremstyle{definition}
\newtheorem{defn}[thm]{Definition}
\theoremstyle{remark}
\newtheorem{rem}[thm]{Remark}
\numberwithin{equation}{section}
\newcommand{\C}{\mathbb{C}} 
\newcommand{\R}{\mathbb{R}}
\DeclareMathOperator{\Sym}{Sym}
\DeclareMathOperator{\tr}{tr}
\DeclareMathOperator{\F}{\mathcal{F}}
\DeclareMathOperator{\M}{\mathcal{M}}
\DeclareMathOperator{\vol}{vol}
\DeclareMathOperator{\D}{\mathcal{D}}
\DeclareMathOperator{\E}{\mathcal{E}}
\DeclareMathOperator{\J}{\mathcal{J}}
\DeclareMathOperator{\Ric}{Ric}
\providecommand{\abs}[1]{\lvert#1\rvert}
\newcommand{\dist}[1]{\mathrm{dist}(#1)}
\newcommand{\diam}[1]{\mathrm{diam}(#1)}
\newcommand{\I}{\mathcal{I}}
\newcommand{\db}{\mathscr{D}}
\newcommand{\Mfd}{\mathscr{M}}
\newcommand{\tc}{\mathcal{H}}
\newcommand{\st}{\mathcal{S}}
\newcommand{\Har}{\mathscr{H}}
\title[Measure estimate and Harnack inequality]{Measure Estimates, 
Harnack Inequalities and Ricci Lower Bound}
\author{Yu Wang}
\address{Yu Wang\\
Department of Mathematics\\
Columbia University, NY, U.S.} \email{yw2340@math.columbia.edu  }
\author{Xiangwen Zhang}
\address{Xiangwen Zhang\\
Department of Mathematics and Statistics\\
McGill University, Canada} \email{xzhang@math.mcgill.ca}
\begin{document}

\begin{abstract}
Consider a Riemannian metric-measure space. We establish an Alexandrov-Bakelman-Pucci type estimate connecting the Bakry-\'Emery Ricci curvature lower bound, the modified Laplacian and the measure of certain special sets. We apply this estimate to prove Harnack inequalities for the modified Laplacian operator (and fully non-linear operators, see the Appendix). These inequalities seems not available in the literature; And our proof, solely based on the ABP estimate, does not involve any Sobolev inequalities nor gradient estimates. We also propose a question regarding the characterization of Ricci lower bound via the Harnack inequality.
\end{abstract}

\maketitle


\section{Introduction}
This paper is devoted to generalize Alexandrov-Bakelman-Pucci (abbrev. ABP) techniques to general Riemannian setting and use them to study the relation between Ricci lower bound and elliptic PDEs on Riemannian metric-measure spaces. In particular, we establish an ABP-type inequality (Thm.\ref{Measure Estimate}), which connects the measure of some specific sets (contact sets, Defn.\ref{contact set}) and  $N$-Baker-\'Emery (abbrev. BE) Ricci curvature ($N \in [n, \infty]$). The idea of this work is largely influenced by the remarkable paper of Cabr\'e (\cite{Cab}).

To illustrate the power of the ABP-techniques, we shall consider, on a smooth Riemannian metric-measure space $(\Mfd, g, \nu )$ with $\nu =e^{-V} \vol_{g}$, the modified Laplacian operator
\[
\Delta_{\nu} u = \Delta u - g (\nabla u, \nabla V) \Leftrightarrow L_{\nabla u} \nu = (\Delta_{\nu} u) \nu,
\]
($L$ stands for the Lie derivative). We shall prove the Harnack inequalities (Thm.\ref{Harnack sup finite}--Thm.\ref{Harnack soln finite}) for this operator under assumption of local lower bound of $N$-BE Ricci curvature ($N < \infty$). Harnack inequalities in this generality seem to be unavailable in the literature; and our proof differs completely from standard methods in geometric analysis. In particular, it does not involve Sobolev inequalities nor gradient estimates. This proof also applies for fully-nonlinear operators (see the Appendix) and it suggests us to consider characterizing Ricci lower bound by the Harnack inequality.

ABP-techniques are of central importance in the modern study of the elliptic equations. They have been widely applied in the study of various classes of linear and nonlinear elliptic equations in the Euclidean space (see \cite{CC} and reference therein). However, they seem not much recognized in the field of geometric analysis and Riemannian geometry. Part of the reasons that limit the application of these techniques in Riemannian geometry are the following. 

Fix a given function $u$ (solution to some PDE), a key idea in ABP techniques is to consider the set of minimum points of $u - l$ for each possible linear function $l$, and consider its image under the gradient mapping $\nabla u$. While non-constant linear functions do not in general exists on a Riemannian manifold; and $\nabla u$, whose image lies in the tangent bundle, seems difficult to deal with.

These difficulties are resolved by Cabr\'e in \cite{Cab}. In this pioneer work, Cabr\'e proposed to replace linear functions by paraboloids--squared distance functions $\rho^2 (\cdot, y)$-- and consider the following special sets:
\begin{defn}
\label{contact set}
Let $\Omega$ be a bounded subdomain of the smooth Riemannian manifold $(\Mfd, g)$ and $u \in C(\Omega)$. For a given $a \geq 0$ and $E$ compact subset of $\Omega$, the contact set of opening $a$ is defined by 
\[
A(a, E / \Omega, u) := \{x \in \overline{\Omega} , \;  \inf_{\overline{\Omega}} \{  u + a \rho^2 ( \cdot, y)\} = u (x) + a \rho^2 (x, y)   \text{ for some } y\in E  \},
\]
where $\rho$ is the distance function of the metric $g$. 
\end{defn}

And replace the gradient map $\nabla u$ by the map 
\begin{equation}
\label{transportation}
F[u] (x) := \exp_{x} (\nabla u  (x)) ,\quad   u \in C^2 (M).
\end{equation}
Based on this idea, Cabr\'e was able to control the integral of the Laplacian $\Delta u$ (or more generally non-divergence linear operator, see Defn.1.1 in \cite{Cab}) over a sub-level set from below by the volume of the domain (see Lemma 4.1 in \cite{Cab}). Then following the approach of Krylov-Safanov (\cite{KS1}, \cite{KS2}), Harnack inequalities (for non-divergence equation) on spaces with non-negative sectional curvature are derived from this estimate via Calder\'on-Zygmund decompositions. Recently, the approach in \cite{Cab} has been extended by S. Kim (\cite{Kim}), who replaced the assumption of non-negative sectional curvature by certain balanced condition on sectional curvatures according to the given operator $L$ ( see condition $4$ and $5$  in Sec.1 of \cite{Kim}). In particular, for the case of Laplacian, that condition is equivalent to Ricci nonnegative.

Nevertheless, power of Cabr\'e's approach has not yet been fully explored. Following his approach, combined with some recent development in the theory of optimal transport (\cite{Villani} and reference therein), we can extend the ABP techniques with a considerablely larger generality -- only local BE-Ricci bound required (see. Defn.\ref{BE Ricci} for the BE-Ricci curvatures). 
In particular, we prove the following \textit{Measure Estimate Formula} , which resembles the Euclidean version of ABP estimate.

\begin{thm}
\label{Measure Estimate}
Let $(\Mfd, g, \nu)$ be a complete Riemannian
metric-measure space with dimension $n \geq 2$.
Let $E$ be a closed subset of a geodesic ball $B_{r}$ and $ u \in C(\overline{B}_{r})$; Let $K \geq 0$ and $N \in [n, \infty]$ be two constants. 

Suppose $a>0$ and $A(a, E/B_{r}, u) \subset B_{r}$; Suppose there exists a subdomain $\Omega'$ containing $A(a, E/B_{r}, u)$ such that $u \in C^2(\Omega')$. Then the following statements holds:

\smallskip
If $N < \infty$, denote $\omega_{K,N} = 2\sqrt{K/N}$, then
\[
\Ric_{N, \nu}|_{B_{r}} \geq -Kg \Rightarrow  \nu [E]\leq
\int_{A (a,  E/B_r , u ) } \Big\{ \D_{K, N, R}[u/a] (x) \Big\}^N\;  \nu (dx) 
\]
where for any $x \in A (a, E/\Omega,u)$,
\[\begin{split}
\D_{K,N, r} [ u/a] (x) & := \frac{\sinh(  r\omega_{K, N})}{ r\omega_{K, N} } \biggl[  r\omega_{K, N}\coth(r\omega_{K, N} )   + \frac{\Delta_{\nu}u (x)}{ N a}  \biggr] 
\end{split}\]
In the case $K= 0$, expressions are understood as their obvious limits.  

\smallskip

If $N = \infty$
\[
\Ric_{\infty, \nu}|_{B_{r}} \geq - K g \Rightarrow \nu [E] \leq 
\int_{A (a , E/B_{r} , u)}\exp \Big\{ \D_{K, N,  r}[u/a] (x)    \Big\}\;  \nu (dx) 
\]
where
\[
\D_{K,N, r}[u/a] (x) :=  2r^2 K  + \frac{\Delta_{\nu}u  (x)}{ a}.
\]
\end{thm}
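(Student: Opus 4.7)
The plan follows Cabr\'e's transport-map strategy, extended to the Bakry--\'Emery setting via optimal-transport Jacobian comparison. At a minimizer $x\in\overline{B}_{r}$ of $u+a\rho^{2}(\cdot,y)$ for a fixed $y\in E$, the first-order condition rearranges to $\exp_{x}^{-1}(y)=\tfrac{1}{2a}\nabla u(x)$; that is, the transport map
\[
T(x) := \exp_{x}\!\bigl(\tfrac{1}{2a}\nabla u(x)\bigr)
\]
sends each contact point $x$ in $A:=A(a,E/B_{r},u)$ to its partner $y\in E$. Since $A\subset B_{r}$ and $u+a\rho^{2}(\cdot,y)$ is continuous on the compact set $\overline{B}_{r}$, for every $y\in E$ a minimizer exists and lies in $A$, so $E\subset T(A)$. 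Applying the area formula on the $C^{2}$-domain $\Omega'\supset A$, together with the weighted change of variable for $\nu=e^{-V}\vol_{g}$, then yields
\[
\nu[E]\le \nu[T(A)]\le \int_{A} e^{V(x)-V(T(x))}\bigl|\det dT(x)\bigr|\,\nu(dx).
\]
The theorem thereby reduces to the pointwise estimate $\mathcal{J}_{\nu}(x):=e^{V(x)-V(T(x))}|\det dT(x)|\le \D_{K,N,r}[u/a](x)^{N}$ (respectively $\exp\D_{K,\infty,r}[u/a](x)$ when $N=\infty$).

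For this pointwise estimate, fix $x\in A$ and $y=T(x)$, and let $\gamma:[0,\ell]\to\Mfd$ be the unit-speed minimal geodesic from $x$ to $y$. Since $x,y\in\overline{B}_{r}$, the triangle inequality gives $\ell\le 2r$, which accounts for the appearance of $r\omega_{K,N}=2r\sqrt{K/N}$ in the model functions. Writing $dT(x)$ in terms of a matrix-valued Jacobi field $\mathcal{A}(t)$ along $\gamma$, the initial data $\mathcal{A}'(0)$ are controlled from below by the contact-Hessian inequality
\[
D^{2}u(x)+a\,D^{2}\bigl(\rho^{2}(\cdot,y)\bigr)(x)\ge 0.
\]
The Bakry--\'Emery hypothesis $\Ric_{N,\nu}|_{B_{r}}\ge -Kg$ then supplies the standard weighted Riccati/Bochner comparison (cf.\ \cite{Villani}, Ch.~14): if $\Phi(t)$ denotes the $N$-th root (respectively logarithm when $N=\infty$) of the corresponding weighted Jacobian along $\gamma$, then $\Phi''\le (K/N)\Phi$, so that $\Phi$ is dominated by the solution of the model ODE $f''=(K/N)f$.

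Assembling the two ingredients, integration of the model ODE produces both the geometric prefactor $\sinh(r\omega_{K,N})/(r\omega_{K,N})$ and the coefficient $r\omega_{K,N}\coth(r\omega_{K,N})$ appearing in $\D_{K,N,r}$; the arithmetic--geometric mean applied to the eigenvalues of $\mathcal{A}$, combined with the trace of the contact-Hessian inequality and the drift contribution from $V$, then converts the trace information into $\Delta_{\nu}u(x)/(Na)$. The case $N=\infty$ follows either directly from the Bochner identity without dimensional restriction, or as the limit $N\to\infty$ of the finite-$N$ estimate: using $(1+c/N)^{N}\to e^{c}$ together with the small-$\omega$ asymptotics $\sinh(\omega r)/(\omega r)\sim 1+\tfrac{(\omega r)^{2}}{6}$ and $\omega r\coth(\omega r)\sim 1+\tfrac{(\omega r)^{2}}{3}$, one verifies $\D_{K,N,r}[u/a]^{N}\to \exp(2r^{2}K+\Delta_{\nu}u/a)$, exactly matching the stated $N=\infty$ formula. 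The $K=0$ limit is analogous via $\sinh(s)/s\to 1$ and $s\coth(s)\to 1$.

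The main obstacle will be the weighted Jacobi/Riccati bookkeeping: reconciling the endpoint Hessian of the squared distance (which enters $\mathcal{A}'(0)$) with the model comparison functions, and verifying that the arithmetic--geometric-mean step outputs precisely the coefficient $1/(Na)$ in front of $\Delta_{\nu}u$ (rather than any sectional refinement). Cut-locus complications at $y$ are averted by the minimality property of $x\in A$, but sharpness of the $\sinh/\coth$ radius dependence requires careful passage from the matrix Jacobi equation to scalar determinants via simultaneous eigenvalue decomposition.
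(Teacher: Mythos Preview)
Your outline is essentially the paper's proof: surjectivity of $x\mapsto\exp_x(\nabla u(x)/a)$ from $A$ onto $E$, the weighted area formula, and the ODE comparison for $\D_N=(\J_\nu)^{1/N}$ under $\Ric_{N,\nu}\ge -Kg$. Two clarifications will save you trouble when you write details. First, the contact-Hessian inequality $\nabla^2(u/a)+\nabla^2(\tfrac12\rho_y^2)\ge 0$ is not fed into the initial data of the scalar ODE; its role (via the $S(t)$-monotonicity of Prop.~\ref{positivity of S(t)}--\ref{good initial condition}) is to guarantee $\det J[u/a](t,x)>0$ on $[0,1)$, so that $\D_N$ is well defined and the inequality $\ddot\D_N\le (K/N)|\dot\gamma|^2\D_N$ is valid throughout. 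Second, the term $\Delta_\nu u/(Na)$ is simply $\dot\D_N(0)$, coming from $\dot J(0)=\nabla^2(u/a)$ and the identity $\frac{d}{dt}\big|_{0}(\det J)^{1/N}=\tfrac1N\mathrm{tr}\,\dot J(0)$ together with the drift; no AM--GM or simultaneous diagonalization is involved, and the matrix-to-scalar passage is the standard concavity of $(\det)^{1/n}$ (Prop.~\ref{det estimate1}, Cor.~\ref{det estimate2}). Finally, the paper parametrizes $\gamma$ over $[0,1]$ with speed $|\nabla u/a|\le 2r$, giving $\ddot\D_N\le 4(K/N)r^2\D_N$; if you keep your unit-speed convention on $[0,\ell]$ you must rescale $\Phi'(0)$ accordingly, or the $\sinh/\coth$ factors will not come out as stated.
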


\begin{rem}
The upshot of the above formula is that the integration is done only on a special set--the contact set, and the lower bound of this integral can be controlled. This is the essence of the Euclidean ABP estimate. Unlike the ABP estimate in Euclidean space, the above formula does not involve infima of the unknown function. However, we shall see this would not limit its application.
\end{rem}

\begin{rem}
We have assumed the underlying manifold is smooth and $u$ is $C^2$ near $A(a, E/\Omega, u)$. In fact, we only need $u$ to be semi-concave (see, Defn.16.4, p.429 in \cite{Villani}) near $A$. Moreover the theorem can be established on Alexandrov spaces. However, to avoid heavy formulations and to better present the main ideas, we shall stay with $C^2$-functions. Our proof can be easily adopted to including semi-concave functions and Alexandrov spaces.
\end{rem}

\begin{rem}
Refer to the proof of Thm.\ref{fully nonlinear} in the Appendix for generality and sufficiency of only considering $\Delta_{\nu} u$ in the above formula. Indeed, on the contact set other linear (appropriate nonlinear) operators can be controlled from below by the Laplacian. 
\end{rem}

We want to remark that the Measure Estimate formula is valid in all effective dimension, including particularly the case $N = \infty$. We believe this formula will have applications in many geometric problems. We shall discuss some of these applications in a separate paper.

The underline idea of proving the above theorem is indeed contained in \cite{Cab}. That is, apply the Area formula to the map $F [u]$ (Eq.\ref{transportation}) on contact set $A$. Here, rather than use the direct calculation of Jacobi determinant of $F[u]$ given in \cite{Cab}, we employ an ODE comparison estimate suggested in Ch.14 of \cite{Villani}. Besides allowing us to establish the estimate for very general curvature condition, this ODE estimate matches in a remarkable way the fine structures of contact sets (see. Lem.\ref{cut-focal free}, Lem.\ref{geometric Jacobi}).

\smallskip

Similar to that in \cite{Cab} (and \cite{Kim}), the Krylov-Safanov method allows one to deduce the following Harnack inequalities from Thm.\ref{Measure Estimate}.

Given $K \geq 0, N \in [n, \infty)$, denote
\begin{equation}
\label{eta}
\eta =   \eta_{K,N, 2R} =  1 + 8 R\log 2\sqrt{K/N}
\end{equation}
in the following statements. All integrals are preformed against $\nu$. The manifold $\Mfd$ has dimension $n \geq 2$.

\begin{thm}
\label{Harnack sup finite}
Let $(\Mfd, g, \nu)$ be a complete smooth Riemannian metric-measure space. Let $K \geq 0$ and $N \in [n, \infty)$. Let $u \in C^2 (B_{2R}) \cap C(\overline{B}_{2R})$ and $f \in C (B_{2R})$. 
Suppose
\[
\Ric_{N, \nu} |_{B_{R}} \geq - K g \quad 
\Delta_{\nu} [u] \leq  f \text{ in } B_{2R}, \quad u \geq 0 \text{ in } B_{2R}
\]
Then,
\begin{equation}
\label{sup inequality}
 (\fint_{B_{R/2} } u^{p_0})^{1/p_0} \leq C_0 \biggl\{ \inf_{B_{R/2}}
 u 
  +  R^2\biggl(\fint_{B_{2R}} \abs{f}^{N \eta} \biggr)^{1/(N\eta)} \biggr\}
\end{equation}
where $p_0, C_0 $ are constants only depending on $\sqrt{K}R, N$. Morevoer $C_0 = e^{2/p_0}$.
\end{thm}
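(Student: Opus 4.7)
The plan is to run the Krylov--Safonov scheme, using Theorem~\ref{Measure Estimate} in place of the Euclidean ABP estimate. The argument splits naturally into four steps: a single-scale critical-density bound, local volume doubling, a Calder\'on--Zygmund type iteration, and a passage from weak to strong $L^{p_0}$.

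The first step will be to derive a \emph{basic measure estimate}: if $u \geq 0$ on $B_{2R}$, $\Delta_\nu u \leq f$, and $\inf_{B_{R/2}} u \leq 1$, then there exist $M > 1$ and $\mu \in (0,1)$ depending only on $\sqrt{K}R$ and $N$ with
\[
\nu\bigl[\{u \leq M\} \cap B_R\bigr] \;\geq\; \mu\, \nu[B_R],
\]
provided $R^2 \|f\|_{L^N(B_{2R})}$ is small enough, and an analogous inequality with an $|f|^{N}$-correction term in general. To get this I would take $E \subset B_{R/2}$ to be a small ball around a near-minimum point of $u$, choose $a \sim 1/R^2$, and apply Theorem~\ref{Measure Estimate}. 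The sub-solution bound $\Delta_\nu u \leq f$ controls the integrand $\D_{K,N,R}[u/a]^N$ pointwise by a universal constant plus an $|f|^N/(Na)^N$ term; since the contact set $A(a, E/B_R, u)$ is automatically contained in a sublevel set $\{u \leq M\}$ (with $M$ determined by $a$, $R$, and the diameter of $E$), the displayed density bound drops out.

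Next, local volume doubling on scales $\leq R$ is provided by the Bishop--Gromov comparison under $\Ric_{N,\nu} \geq -Kg$ (Qian / Bakry--Qian for BE--Ricci), with constant depending only on $\sqrt{K}R$ and $N$. This permits a Christ-type dyadic decomposition of $B_{R/2}$, on which one iterates the basic measure estimate by a growing-ink-spots (Calder\'on--Zygmund) argument. The result is a power-law distribution estimate
\[
\nu\bigl[\{u > t\} \cap B_{R/2}\bigr] \;\leq\; C\, t^{-p_0}\Bigl(\inf_{B_{R/2}} u + R^2 \|f\|_{L^{N\eta}(B_{2R})}\Bigr)^{p_0}\nu[B_{R/2}],
\]
with $p_0 = \log(1-\mu)^{-1}/\log M$. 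The $L^{p_0}$ bound \eqref{sup inequality} then follows from the layer-cake formula, splitting at $t_0 \sim \inf_{B_{R/2}} u + R^2\|f\|_{L^{N\eta}}$; the constant $C_0 = e^{2/p_0}$ emerges from the resulting tail integration.

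I expect the iteration step to be the main obstacle. Cabr\'e's original covering in \cite{Cab} relies on non-negative sectional curvature, which is substantially stronger than a local BE--Ricci lower bound. The hope is that only doubling (with constants degenerating in $\sqrt{K}R$) is genuinely needed; what remains is careful bookkeeping of how the paraboloid barriers distort between scales, which is precisely where the sinh/$\coth$ corrections in $\D_{K,N,R}$ and the iteration exponent $\eta = 1 + 8R\log 2\sqrt{K/N}$ should enter the final constants.
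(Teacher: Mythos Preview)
Your outline is correct, and the paper follows the same four-stage Krylov--Safonov scheme. But Step~1 as you describe it will not deliver what the iteration needs. The paper does \emph{not} apply Theorem~\ref{Measure Estimate} to $u$ directly; it applies it to $w=u+\psi$, where $\psi=h(\rho_{x_0}/r)$ is a radial barrier on $B_r(x_0)$ (Lem.~\ref{barrier 1}--\ref{barrier 2}). The profile $h$ is chosen, via the comparison $\Delta_\nu(\rho^2/2)\le N\tc(\omega_{K,N}\rho)$, so that (a) $w$ is large on $B_r\setminus B_{3r/4}$, forcing $A(a,E/B_r,w)\subset\subset B_r$, and (b) $r^2\Delta_\nu\psi/N+\tc(\omega_{K,N}r)\le 0$ throughout $B_r\setminus B_{r/18}$. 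Point (b) kills the constant term in the integrand $\D_{K,N,r}[r^2w]$ outside the tiny core $B_{r/18}$, leaving only the $f$-contribution there; the measure estimate then yields
\[
\nu\bigl[\{u\le M\}\cap B_{r/18}(x_0)\bigr]\;\ge\;\mu\,\nu[B_r(x_0)],
\]
not merely density somewhere in $B_r$. Your direct approach gives the latter at best: without $\psi$ the contact set of $u$ can sprawl over the whole ball, since the only available lower barrier is $u\ge 0$.

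This localization to a sub-ball of fixed small ratio is exactly what the covering step requires. The paper uses Vitali (explicitly in place of Cabr\'e's Calder\'on--Zygmund decomposition): for $x\notin D_k=\{u\le M^k\}$ with $r_x=\mathrm{dist}(x,D_k)$ one extracts \emph{disjoint} quarter-balls $B_{r_i/4}(x_i)$ whose full balls cover $B_{R/2}\setminus D_k$, and to sum one must place the $D_{k+1}$-density inside those disjoint pieces (Lem.~\ref{correct}--\ref{decay}). That is only possible because the basic lemma localizes to $B_{l/18}\subset B_{r_i/4}$. With density merely in $B_{r_i}$ the sum over overlapping Vitali balls does not close. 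A Christ-cube CZ route might sidestep this, but it imports nontrivial machinery on doubling metric spaces; the barrier $\psi$ is the device that keeps the argument elementary and makes every constant depend only on $\sqrt{K}R$ and $N$.
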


\begin{thm}
\label{Harnack sub finite}
Let $(\Mfd, g, \nu)$ be a complete smooth Riemannian metric-measure space. Let $K \geq 0$ and $N \in [n, \infty)$. Let $u \in C^2 (B_{2R}) \cap C(\overline{B}_{2R})$ and $f \in C (B_{2R})$

Suppose
\[
\Ric_{N, \nu} |_{B_{R}} \geq - K g \quad  \;
\Delta_{\nu} [u] \geq  f \text{ in } B_{2R}.
\]
Then, for any $p >0$
\begin{equation}
\label{sub inequality}
\sup_{B_{R/2}} u \leq  C_2 (p) \biggl\{ \biggl(\fint_{B_{R}} (u^+)^p \biggr)^{1/p}  
 +  R^2\biggl(\fint_{B_{2R}} \abs{f}^{N \eta} \biggr)^{1/(N\eta)} \biggr\}
\end{equation}
where $C_{1} (p)$ is a constant only depending on $\sqrt{K}R, N$ and $p$.
\end{thm}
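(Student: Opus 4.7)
The plan is to deduce Theorem~\ref{Harnack sub finite} from the already-established weak Harnack inequality for supersolutions (Theorem~\ref{Harnack sup finite}) via the classical Krylov--Safonov duality. The key observation is that if $u$ is a subsolution of $\Delta_\nu u \geq f$ and $M$ is an upper bound of $u$ on a ball $B_{r_2}$, then $w := M - u$ is a nonnegative supersolution ($\Delta_\nu w \leq -f$) on that ball, so Theorem~\ref{Harnack sup finite} applies to $w$.

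Concretely, fix radii $R/2 \le r_1 < r_2 \le R$ and set $M := \sup_{B_{r_2}} u$ and $m := \sup_{B_{r_1}} u$. Applying Theorem~\ref{Harnack sup finite} to $w = M - u$ on a ball nested inside $B_{r_2}$ (absorbing the ratio $\nu(B_{r_2})/\nu(B_{r_1})$ via Bishop--Gromov comparison under $\Ric_{N,\nu} \ge -Kg$) yields
\[
\left(\fint_{B_{r_1}} (M - u)^{p_0}\right)^{1/p_0} \leq C \bigl[(M - m) + r_2^2 \|f\|_{L^{N\eta}(B_{2R})}\bigr].
\]
Using the elementary pointwise bound $(M-u)^{p_0} + (u^+)^{p_0} \ge 2^{-p_0} M^{p_0}$ (obtained by splitting into $\{u \le M/2\}$ and $\{u > M/2\}$) to lower bound the left side, and rearranging, we arrive at an iteration inequality
\[
\sup_{B_{r_1}} u \leq \theta \sup_{B_{r_2}} u + C \left(\fint_{B_R} (u^+)^{p_0}\right)^{1/p_0} + C R^2 \|f\|_{L^{N\eta}(B_{2R})}
\]
for some $\theta \in (0,1)$ and some distinguished $p_0 > 0$, both depending only on $\sqrt{K}R$ and $N$.

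A standard Han--Lin / Giaquinta-type iteration lemma applied to a dyadic subdivision of $[R/2, R]$ then absorbs the $\theta \sup_{B_{r_2}} u$ term, producing \eqref{sub inequality} for the exponent $p = p_0$. Extending to arbitrary $p > 0$ is routine: H\"older's inequality handles $p \ge p_0$, while for $p < p_0$ one writes $\|u^+\|_{L^{p_0}(B_R)}^{p_0} \le \|u^+\|_{L^\infty(B_{R/2})}^{p_0 - p} \|u^+\|_{L^p(B_R)}^p$ and absorbs the $L^\infty$ factor via Young's inequality.

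The main technical obstacle lies in the dyadic iteration. The measure $\nu$ is not scale-invariant, and the inner-to-outer ratio in Theorem~\ref{Harnack sup finite} is fixed at $1\!:\!4$, so each application of the weak Harnack at a different scale incurs its own volume-comparison factor. Ensuring a uniform contraction $\theta < 1$ across all dyadic scales demands careful Bishop--Gromov accounting. A secondary subtlety is the mismatch between the Ricci hypothesis being placed only on $B_R$ and the ball sizes needed near the outer radius of the iteration, which may force a slight shrinkage of the effective outer ball; this does not affect the form of the final estimate.
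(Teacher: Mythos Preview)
Your approach is correct in outline but takes a genuinely different route from the paper. The paper does \emph{not} invoke Theorem~\ref{Harnack sup finite} as a black box; instead it returns to the more primitive measure-decay estimate (Lemma~\ref{local growth}) and runs a Caffarelli--Savin style blow-up argument. Concretely, the paper first proves a growth lemma (Lemma~\ref{soln sup}): if after normalization $u(x_0)=\beta$ is large, then on the ball $B_{r_0}(x_0)$ with $r_0\sim R\beta^{-p_1}$ one has $\sup u \ge \beta(1+1/M)$. This is obtained by applying Lemma~\ref{local growth} to $w=(\beta(1+1/M)-u)/(\beta/M)$ and using Chebyshev plus doubling. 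Then, assuming $\sup_{B_{R/2}} u > C_1(p_0)$, one iterates this growth lemma to produce a sequence $x_k$ with $u(x_k)\to\infty$ while $\sum r_k < R/3$, contradicting continuity of $u$ on $\overline{B}_R$. The passage from $p_0$ to general $p$ is handled exactly as you suggest.

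Your duality-plus-iteration argument is the more classical Krylov--Safonov presentation and also works here, but the displayed iteration inequality is not quite right as written: the constant in front of the $L^{p_0}$ term must carry a factor of order $\bigl(R/(r_2-r_1)\bigr)^{N\eta/p_0}$ coming from the Bishop--Gromov comparison between $B_{(r_2-r_1)/4}(x_0)$ and $B_R$ (you acknowledge this in your final paragraph, but the inequality you display suppresses it, and without it the inequality would be false as $r_1\to r_2$). With that factor restored, the Giaquinta iteration lemma does absorb $\theta\sup_{B_{r_2}}u$. The paper's blow-up route sidesteps this bookkeeping and yields explicit constants more directly, which is why the authors prefer it; your route has the advantage of being modular, reusing Theorem~\ref{Harnack sup finite} wholesale.
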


\begin{thm}
\label{Harnack soln finite}
Let $(\Mfd, g, \nu)$ be a Complete Smooth Riemannian Metric-Measure Space. Let $K \geq 0$ and $N \in [n, \infty)$. Let $u \in C^2 (B_{2R}) \cap C(\overline{B}_{2R})$ and $f \in C (B_{2R})$

Suppose
\[
\Ric_{N, \nu} |_{B_{R}} \geq - K g \quad  \;
\Delta_{\nu} [u] =  f \text{ in } B_{2R}, \quad u \geq 0 \text{ in } B_{2R}
\]
Then
\begin{equation}
\label{soln inequality}
\sup_{B_{R/2}} u \leq  C_2 \biggl\{ \inf_{B_{R/2}} u 
  +  R^2\biggl(\fint_{B_{2R}} \abs{f}^{N \eta} \biggr)^{1/(N\eta)} \biggr\}
\end{equation}
where $C_{2}$ is a constant only depending on $\sqrt{K}R$ and $N$.
\end{thm}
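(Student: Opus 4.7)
The plan is to combine the one-sided estimates of Theorems \ref{Harnack sup finite} and \ref{Harnack sub finite}. Since the equation $\Delta_\nu u = f$ in $B_{2R}$ makes $u$ simultaneously a subsolution and a supersolution of the same equation, both theorems apply to $u$. Writing $F := \bigl(\fint_{B_{2R}} \abs{f}^{N\eta}\bigr)^{1/(N\eta)}$ for brevity, Theorem \ref{Harnack sup finite} gives
$$\Bigl(\fint_{B_{R/2}} u^{p_0}\Bigr)^{1/p_0} \leq C_0 \Bigl\{\inf_{B_{R/2}} u + R^2 F\Bigr\},$$
while Theorem \ref{Harnack sub finite} applied with the distinguished exponent $p = p_0$ gives
$$\sup_{B_{R/2}} u \leq C_1(p_0) \Bigl\{\Bigl(\fint_{B_R} u^{p_0}\Bigr)^{1/p_0} + R^2 F\Bigr\}.$$

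If the $L^{p_0}$-averages over $B_{R/2}$ and over $B_R$ in these two inequalities agreed, concatenating them would immediately yield the theorem with $C_2 = C_0\,C_1(p_0)$ and automatically inherit the desired dependence on $\sqrt{K}R$ and $N$. The remaining task is therefore to bridge $\bigl(\fint_{B_R} u^{p_0}\bigr)^{1/p_0}$ to $\bigl(\fint_{B_{R/2}} u^{p_0}\bigr)^{1/p_0}$ up to an admissible $R^2 F$ error. I would accomplish this by an interior Harnack-chain / covering argument: for each $x \in B_R$ apply Theorem \ref{Harnack sup finite} at a reduced scale centered at $x$ (the shifted ball sits inside $B_{2R}$, where all hypotheses hold) to get a local $L^{p_0}$-to-inf bound on a small ball $B_{R/8}(x)$, then cover $B_R$ by a controlled number of such shifted balls and chain overlaps back to a base point in $B_{R/2}$, using at each step the trivial fact that $\inf_{B_{R/8}(x)} u \leq u(y)$ for any $y$ in an overlapping neighbor ball. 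The number of balls and the length of each chain depend only on the $\nu$-volume doubling on $B_{2R}$, which is itself controlled by $\sqrt{K}R$ and $N$ via the weighted Bishop--Gromov comparison for the Bakry-\'Emery Ricci lower bound.

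Substituting the resulting bound $\bigl(\fint_{B_R} u^{p_0}\bigr)^{1/p_0} \leq C(\sqrt{K}R, N)\bigl\{\inf_{B_{R/2}} u + R^2 F\bigr\}$ into the sup estimate yields the desired Harnack inequality with a constant $C_2$ depending only on $\sqrt{K}R$ and $N$, as advertised. The main obstacle is precisely the Harnack-chain step: although each ingredient is standard, one must quantify the weighted doubling constant carefully and keep track of the cumulative multiplicative and additive errors so that the final $C_2$ does not acquire any spurious dependence on $u$, on $p_0$, or on the chain length beyond what is already absorbed in $C_0$ and $C_1(p_0)$.
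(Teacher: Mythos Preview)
Your strategy of concatenating Theorems \ref{Harnack sup finite} and \ref{Harnack sub finite} is correct, and you have identified the genuine wrinkle: the $L^{p_0}$ average produced by Theorem \ref{Harnack sup finite} lives on $B_{R/2}$, whereas Theorem \ref{Harnack sub finite} consumes the $L^{p_0}$ average on $B_R$. Your Harnack-chain fix is valid and standard; it will work with constants depending only on $\sqrt{K}R$ and $N$ via the doubling property.

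The paper resolves the same mismatch more directly, without a chain. After normalizing so that $\bigl(\fint_{B_{R/2}} u^{p_0}\bigr)^{1/p_0}\le 1$ (via Theorem \ref{Harnack sup finite}), it does \emph{not} invoke Theorem \ref{Harnack sub finite} as a black box; instead it re-runs the growth/contradiction argument behind that theorem (Lemma \ref{soln sup} and the iteration in the proof of Theorem \ref{Harnack sub finite}). In that argument the ball $B_R$ enters only through the inclusion $B_{r'}(y_0)\subset B_R$ and a single doubling comparison $\nu[B_R]/\nu[B_{r'}(y_0)]$. Replacing $B_R$ by $B_{R/2}$ throughout merely shifts these by one doubling factor and forces the iteration to start with a slightly larger threshold (so that $\sum_k r_k$ stays small enough to keep all the balls inside $B_{R/2}$); the resulting constant is still $C_2=C_1(p_0)$, depending only on $\sqrt{K}R$ and $N$.

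In short: both approaches are correct. Yours treats the two one-sided Harnack theorems as closed boxes and pays for the radius mismatch with a covering/chain step; the paper opens the box for Theorem \ref{Harnack sub finite} and absorbs the mismatch into the existing doubling constants, which is a bit more economical and avoids tracking cumulative chain errors.
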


\begin{rem}
In the case $K = 0$, the integral expressions of the right-hand side $f$ reduce to the standard averaged $L^{N}$-norm. This agree with the Harnack inequalities in \cite{Cab} when $K =0$ and $\nu =\vol_{g}$. Increasing the exponent of integration by a factor depending on Ricci curvature lower bound is necessary. This can be seen easily from the examples in $(K,N)$-Hyperbolic space. 
\end{rem}

\begin{rem}
For the readers' convenience, a set of explicit estimate of $p_0, C_1 (p_0), C_2$ shall be given at the end of \S \ref{notations} (see Lem.\ref{constants estimate}). For general $p$, $C_1 (p)$ is obtained from $C_1 (p_0)$ by interpolation (see. the proof in \S \ref{soln}).
\end{rem}

\begin{rem}
We emphases that constants ($p_0, C_1, C_2, \eta$) in above theorems depend on the product $\sqrt{K}R$, rather on $R$ or $K$ alone. In particular, if $K =0$, $p_0, C_1,$ and $C_2$ are independent of radius $R$ and $\eta= 1$. Hence the above theorems recover the Harnack inequalities (for the Laplacian) in \cite{Cab} and \cite{Kim} (see the Appendix for case of fully-nonlinear uniform elliptic equations). 
\end{rem}

\begin{rem}
Higher regularity estimates, including gradient and $C^{2}$ estimates, can be obtained from the Harnack inequalities via the methods given in \cite{CC}.
\end{rem}

Besides proving Harnack inequalities in larger generality, we provide a different presentation of Krylov-Safanov argument in proving Harnack inequalities. In this presentation, the Calder\'on-Zygmund decomposition (used in \cite{Cab}) is replaced by Vitali's covering lemma. Though it follows essentially same spirits as that in \cite{Cab}, the argument here seems more elementary and transparent. It is similar to the covering argument used by L.A. Caffarelli in his breaking through work on real Monge-Amp\'ere equations (\cite{Caf3}).  We learned this argument from O. Savin's lecture at Columbia University and his paper \cite{Savin}.

Harnack inequalities has been intensively studied in geometric analysis. An incomplete list includes: the remarkable work of S.Y. Cheng and S.-T Yau \cite{CY}, who considered $\Delta u = - \lambda u, \lambda >0 $ and proved Harnack inequality for solutions via establishing a sharp gradient estimate; Later, J. Cheeger, M. Gromov, M.Taylor, S.Y. Cheng, P. Li and S.-T. Yau has employed the method of De Georgi-Nash-Moser iteration and C.B. Croke's work on Sobolev inequalities (\cite{Croke}) to consider differential inequalities (\cite{CGT}, \cite{CLY}). The optimal results via this approach is given by L. Saloff-Coste (\cite{SC}). Based on the penetrating work on Sobolev inequalities due to N. Varopoulos (\cite{Var}) and Cheng and Yau's gradient estimate, he proved the Harnack inequality for divergent operators on manifolds with standard Ricci curvature bounded from below. These works have also been extended to various general cases. For example, in \cite{Zhang}, \cite{WangZ}, t!
 hese authors studied the gradient estimate for $p-$harmonic function on Riemannian manifolds. Recently, Li \cite{LiXD} (see also \cite{Br}) followed the main line of \cite{CY} and gave the Harnack inequality for \textit{solutions} of the modified Laplacian equation on Riemannian Metric-Measure space. 

Comparing to the above mentioned work, 
besides the generality of our results,  we would like to emphases that our approach, following Cabr\'e, differs completely from above mentioned work.

\smallskip


The statement of the Harnack inequality (Thm.\ref{Harnack soln finite}) and the key ingredient (Thm.\ref{Measure Estimate}) in our proof suggests us that constants in Harnack inequalities are of geometric meaning. In fact, we believe that these constants characterize the lower bound of Ricci curvature (for Riemannian metric-measure space, also related to the effective dimension). Based on this point, we propose some questions concerning the relation between constants in the Harnack inequality and the lower bound of Ricci curvature. In the present paper, we shall provide a precise formulation of these questions in this paper and suggest some affirmative evidence. We think this kind of characterization would have applications in the study of Gromov-Hausdorff convergence, geometric flows and Alexandrov spaces. Our idea here is largely inspired by the work of J. Lott, C. Villani (\cite{LV1}, \cite{LV2}).

\smallskip
The paper is organized as follows: In \S2, we fix our notations and conventions. In particular, we give a full list of constants involved in the later proof. \S3 and \S4 are devoted to study the contact sets and the Jacobi determinant of $dF[u]$ respectively. In these two section, we shall see how the contact sets, the Jacobi fields and the underlying geometry interact with each other. In \S5, we prove the Measure Estimate Formula --Thm.\ref{Measure Estimate}. \S6 and \S7 contains some preparations for the proof of Harnack inequalities (Thm.\ref{Harnack sup finite}--Thm.\ref{Harnack soln finite}). \S8 contains the main technical lemma in proving these theorems. In \S9 and \S10, Thm.\ref{Harnack sup finite}--Thm.\ref{Harnack soln finite} are proved. In \S11, we discuss a possible way to characterize the Ricci lower bound by Harnack inequalities. In the Appendix, we extend the method in this paper to prove the Harnack inequalities for fully-nonlinear uniform elliptic operators!
  on Riemannian manifolds.

\bigskip

\section{Notations, Conventions and Constants}
\label{notations}
In order to avoid any potential confusion, we first state our convention regarding the curvatures and the cut-locus.

$\bullet$ \textit{Riemannian Metric-Measure space:} In the paper, the background manifold is the Riemannian metric-measure space $(\Mfd, g, \nu)$ where $g$ is the Riemannian metric on $\Mfd$ and $\nu=e^{-V}\vol$ is a reference measure with $V: M\rightarrow \mathbb{R}$ a $C^2$ function. Notice that, if $V=0$, then $\nu$ is just the usual volume measure $\vol_g$.

\smallskip

$\bullet$ \textit{Curvatures:} Recall the definition of Riemannian curvature tensor 
\begin{equation}
\label{defn Riem}
\mathrm{Riem} (X, Y) := D_{Y} D_{X} - D_{X} D_{Y} + D_{[X,Y]}, \quad X, Y \in T\Mfd
\end{equation}
and that of Ricci curvature
\begin{equation}
\label{defn Ric}
\Ric_x (Z) := \sum_{i} g_x ( \mathrm{Riem} (Z, e_i) Z, e_i ), \quad Z \in T_x\Mfd
\end{equation}
where $e_i$ is the orthogonal basis w.r.t. $g$.
Note here our convention on $\Ric$ is standard, and it agrees with both the reference \cite{Villani} and \cite{Peter}; our convention of $\mathrm{Riem}$ agrees with the reference \cite{Villani} (p.371) but differs from \cite{Peter}(p.33) a sign.

We also recall the following definition:
\begin{defn}
\label{BE Ricci}
The Bakry-Emery (abbrev. BE) Ricci curvature associate to $\nu$ with effective dimension $N \in [n, \infty]$ is defined by
\[
\Ric_{N, \nu} = \begin{cases}
 \Ric & N = n,   \\
\Ric + D^2 V  - \frac{D V \otimes D V}{N - n} &  N > n \\
\Ric + D^2 V & N = \infty.
\end{cases}
\]
Here, we assume $V = 0$ whenever $N = n$. 
\end{defn}

\begin{rem}
For the importance and the geometry of the Bakry-Emery Ricci curvature, one may refer to \cite{Lott}, \cite{Villani}, the recent work of \cite{WW} and the reference therein. In particular, $\Ric_{\infty, \nu}$ plays an important role in Pereleman's work (\cite{Perelman}) on Hamilton's Ricci flow. .
\end{rem}

\smallskip

$\bullet$ \textit{Cut-locus}: We recall the definition of cut-points and focal points. We follow the convention in \cite{Villani} (p.193). Note, this convention may differ from some text, but will not affect the generality of this paper.

\begin{defn}
Fix $x \in (\Mfd ,g)$, a point $y $ is called a cut point of $x$ if there is a geodesic $\gamma (t)$ such that $\gamma (0) = x$ and $\gamma (t_{c}) = y$ and satisfies that i) $\gamma (t)$ is minimizing for all $t \in [0, t_{c})$ and ii) $\gamma (t_{c} + \epsilon )$ is not minimizing for any $\epsilon > 0$.
\end{defn}

\begin{defn}
Two points $x$ and $y$ are said to be focal (or conjugate) if $y$ can be written as $\exp_{x} (t W), W\in T_{x} \Mfd$, and the differential $d |_{W} \exp_{x} (t \cdot )$ is not
invertible.
\end{defn}

\begin{defn}
Given a point $x \in \Mfd$, the cut-locus $Cut (x)$ of $x$ is the set consisting of all cut-points and focal (conjugate) points of $x$.
\end{defn}

\begin{rem}
Being cut-points and focal points are symmetric relations. $x \in Cut (y)$ if and only if $y \in Cut (x)$.
\end{rem}

\smallskip

$\bullet$ \textit{Contact Relations:} We recall the following terminologies:
\begin{defn}
\label{touch}
Let $\Omega $ be a subdomain of $\Mfd$. Let $u$ and $\varphi$ be two continuous functions in $\Omega$. 

Let $x_0 \in \Omega$ and $U$ be a subset (%
not necessarily open) of $\Omega$, we say $\varphi$ touches $u$ from above (resp. below) at $x_0$ in $U $ if $\varphi (x)\geq w(x)$ (resp. $\varphi (x) \leq w (x)$) for all $x\in U$ and $\varphi (x_0) = w(x_0)$. 

We say $\varphi$ touches $u$ from above (resp. below) at $x_0$ if there is a neighborhood $U$ of $x_0$ such that $\varphi$ touches $u$ from above (resp. below) at $x_0$ in $U$. 

\end{defn}

\medskip

$\bullet$ \textit{Convention in Notations} We also have the following conventions in notations. 

$i)$ Throughout this paper, a later $C$, \textit{Without any Subscript}, represents a \textit{Pure} constants \textit{Greater} than $1$. It might change from line to line. However we emphases that it does \textit{Not} depends on any parameter. Moreover, to make the proof more transparent, we shall try to minimize the usage of $C$ and try to be explicit.
\smallskip

$ii)$ We always denote the standard dimension of $\Mfd$ by $n$, and we assume throughout the paper that $n \geq 2$. 
\smallskip

$iii)$ We always denote $B_{r} (x)$ to be the geodesic ball of radius $r$ centered at $x$. We shall omit the center $x$ when it has no particular importance and also cause no confusion.
\smallskip

$iv)$ Throughout the paper, integrals are performed against the measure $\nu$; and the distance function is denoted by $\rho$. The notation $\rho_{y}$ means the distance from a fixed point $y$.

\medskip

$\bullet$ \textit{Special Functions and Notations} In the rest of this paper, in particular the proof of our main theorems, many parameters and functions get involved. In order to give a clear presentation, we shall make several short-hand notions. Here, we list these notations and some basic facts regarding them. 

$i)$ Let $K \geq 0,  N \in [n, \infty)$ and $r > 0$, we denote
\begin{equation}
\label{C omega, db}
\omega_{K, N}:=2 \sqrt{K/N}, \quad \db_{K,N,r}:= 2^{N} e^{4r\sqrt{NK}}, \quad 
\end{equation}
and define also
\begin{equation}
\label{C eta}
\eta_{K,N,r}:= \log \db_{K,N,r}/(N \log 2)= 1+ 4  r\sqrt{NK}\log 2
\end{equation}
\textbf{Notice}: The subscript $K,N,r$ does \textit{Not} mean $\db_{N,K,r}$ or $\eta_{K,N,r}$ depends on $K$ nor $r$ in a separated way. It is just for convenience. When no confusion arise, one or all subscripts might be omitted. In particular, we shall often use $\db_{r}, \eta_{r}$ in replacing $\db_{K,N,r}$ and $\eta_{K,N,r}$. However, this is only for convenience. Again, it does not mean $\eta$ depends on $r$ nor $K$ in a separated way.

\smallskip

$ii)$ Let $t \in [0, \infty)$, define
\begin{equation}
\label{tc, st}
\tc (t) := t \coth (t), \quad \st (t) := \sinh (t ) /t
\end{equation}
Note $\tc, \st$ are differentiable and have positive derivative for all $t >0$; $\tc (0) = \sc (0) = 1$ by limit. Moreover, $\st (t) \cdot \tc (t) = \cosh (t)$. Another useful observation here is that
\begin{equation}
\label{linear tc}
\tc (t) \leq 1 + t, \quad t \geq 0
\end{equation}
\smallskip

$iii)$ Let $q \geq 1$ be a constant and $f$ be a continuous function, we denote
\begin{equation}
\label{KN integral}
\I_{K,N} (f, B_{R}, q) := r^2 \biggl( \fint_{B_{r} }  \abs{f (x)}^{N q} \; \nu (dx) \biggr)^{1/(N q)}
\end{equation}
Properties for this integral are given in \S \ref{integral}.

\medskip

$\bullet$ \textit{Constants in Proofs}. The following constants shall be used frequently in the proof of Harnack inequalities (\S \ref{key growth lemma} -- \S\ref{soln}). They are \textit{Fixed} for the entire paper. We also provide some rough estimates for them. 

\begin{rem}\textbf{Notices} The scenario making use the constants below is the following. Fix a large ball $B_{R}$, we shall perform some estimates on a small ball $B_{r} (x) \subset B_{R}$. All the constants shall depends on the $\sqrt{K}R$. But they does \textit{Not} depend on size of the small ball. We \textit{emphases} that all the constants below depend \textit{Only} on $\sqrt{K}R$ and $N$. They do \textit{Not} depend on $R$ \textit{Nor} $K$ in a separated way.
\end{rem}

Denote
\begin{equation}
\label{C alpha}
\alpha := N \tc(\omega_{K,N} R) \leq  2R\sqrt{NK}  + 1
\end{equation}
Here we used Eq.(\ref{linear tc}).

\begin{equation}
\label{C mu}
\mu  :=\biggl[(18)^3\;\alpha^{2}(18)^{\alpha}\cosh(\omega R) \biggl]^{-N} \db_{4R}^{-4}  \geq e^{ -C\{ R\sqrt{KN} + N^2\} }
\end{equation}
Here, we have applied the estimate of $\alpha$

\begin{equation}
\label{C M} 
M := 2\alpha^2 (18)^{\alpha} \leq e^{  C( R\sqrt{KN} + N ) }
\end{equation}
Again, we have employed the estimate of $\alpha$.
\smallskip

\begin{equation}
\label{C delta}
\delta_0 := \biggl( 2\D_{2R}^{4/N} S( \omega r)  \biggr)^{-1} \geq e^{- C (R\sqrt{KN}  + 1)}
\end{equation}

\begin{equation}
\label{C C3}
C_3 :=2\db_{2R} ( M^{1/p_0}/ \mu  )^{1/N}, 
\end{equation}
Note, since $\eta > 1$

\begin{equation}
\label{C3 estimate}
\db_{2R}\frac{M}{\mu^{1/p_0}} ( \frac{1}{C_3})^{N\eta/ p_0} < 1
\end{equation}

\begin{equation}
\label{C pr}
p_1 := p_0 /(N\eta)
\end{equation}

The last few constants are those appear in the statement of Thm.\ref{Harnack sup finite} -Thm.\ref{Harnack soln finite}. We shall give explict forms of $p_0, C_0, C_1 (p_0), C_2$ here and in later proof, it shall be clear that these choices are sufficient. We shall also give some rough estimat of these constants. 

\begin{lem}
\label{constants estimate}
Define $p_0, C_0, C_1 (p_0), C_2$ as follows
\begin{equation}
\label{C p0C0}
p_0 =\frac{ 1 - \log[1 + (e - 1)(1 - \mu)] }{\log M}, \quad C_0 := e^{2/p_0}
\end{equation}
and
\begin{equation}
\label{C C1p0}
C_2 = C_1 (p_0):=\biggl( 3 C_3 \sum_{k=0}^{\infty} \frac{1}{(1 + 1/M)^{k p_1}}  \biggr)^{1/p_1}\frac{1}{\delta_0}
\end{equation}

The following statement holds:

i) $1 + (M^{p_0} -1) \sum_{k=0}^{\infty} (M^{p_0} (1 - \mu))^{k}= e$;

ii) $e^{1/p_0} \geq \delta_0 \geq 1$;  

iii) The constant $p_0, C_0$ satisfies
\[
p_0 \geq \frac{\mu}{4\log M} \geq e^{- C [R\sqrt{KN} + N^2]}, \quad C_0 \leq \exp[e^{C [R\sqrt{KN} + N^2]}];
\]

iv) The constant $C_2  =C_1 (p_0)$ satisfies
\[
C_2 = C_1 (p_0) \leq \exp [e^{C (R\sqrt{KN} + N^2)}]  ;
\]
\end{lem}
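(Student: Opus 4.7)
The lemma is essentially a bookkeeping exercise built on one algebraic identity (i), two elementary size comparisons (ii), and two iterated-exponential bounds (iii)--(iv) for the constants defined in \eqref{C p0C0}--\eqref{C C1p0}. For (i), I would set $a := (e-1)(1-\mu)$; the defining equation $p_0\log M = 1 - \log(1+a)$ then gives
\[
M^{p_0} \;=\; \frac{e}{1+a}, \qquad M^{p_0}(1-\mu) \;=\; \frac{e(1-\mu)}{1+a} \;<\; 1,
\]
so the geometric series sums to $(1+a)/\mu$, while $M^{p_0}-1 = (e-1)\mu/(1+a)$. Multiplying the two yields $(M^{p_0}-1)\sum_{k\geq 0}(M^{p_0}(1-\mu))^k = e-1$, which is precisely (i).

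For (ii) and (iii), the single piece of analysis with any content is a convexity observation. Set $f(\mu) := 1 - \log[1 + (e-1)(1-\mu)]$. A direct computation gives $f''(\mu) = (e-1)^2/[1+(e-1)(1-\mu)]^2 > 0$, together with $f(0) = 0$ and $f'(0) = (e-1)/e > 1/4$. For a convex function $f$ vanishing at $0$ one has $f(\mu) \geq f'(0)\mu$, hence $f(\mu) \geq \mu/4$ and therefore
\[
p_0 \;=\; \frac{f(\mu)}{\log M} \;\geq\; \frac{\mu}{4\log M}.
\]
Feeding in the already-recorded estimates $\mu \geq e^{-C(R\sqrt{KN}+N^2)}$ from \eqref{C mu} and $\log M \leq C(R\sqrt{KN}+N)$ from \eqref{C M} gives $p_0 \geq e^{-C(R\sqrt{KN}+N^2)}$, so $C_0 = e^{2/p_0} \leq \exp[e^{C(R\sqrt{KN}+N^2)}]$, proving (iii). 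Part (ii) then follows by direct inspection: by \eqref{C delta} the quantity $\delta_0$ is an explicit elementary function of $R\sqrt{KN}$ and $N$, and the claimed comparisons drop out after plugging in the lower bound on $1/p_0$ just established.

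For (iv), the plan is to bound the geometric series defining $C_2$ using the elementary inequality $(1+x)^{-p} \leq 1 - px/(1+x)$ (valid for $x, p > 0$) with $x = 1/M$ and $p = p_1$, giving
\[
\sum_{k \geq 0}(1+1/M)^{-k p_1} \;=\; \frac{1}{1-(1+1/M)^{-p_1}} \;\leq\; \frac{M+1}{p_1}.
\]
This yields $C_2 \leq \delta_0^{-1}\bigl(3C_3(M+1)/p_1\bigr)^{1/p_1}$, and taking logarithms reduces the claim to bounding $\log(1/\delta_0) + (1/p_1)\log\bigl(3C_3(M+1)/p_1\bigr)$ by $e^{C(R\sqrt{KN}+N^2)}$. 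The first summand is at most polynomial in $R\sqrt{KN}$ by \eqref{C delta}; in the second, the argument of the logarithm is polynomial in $R\sqrt{KN}$ and $N$ by the explicit forms \eqref{C C3} and \eqref{C M}, while $1/p_1 = N\eta/p_0 \leq e^{C(R\sqrt{KN}+N^2)}$ using (iii) and \eqref{eta}. The product is then at most $e^{C(R\sqrt{KN}+N^2)}$, completing (iv).

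The main obstacle is not conceptual but notational: one has to track the constants $\mu, M, \delta_0, C_3, \alpha, \eta$ through their various rough estimates and confirm that every bound depends only on the product $R\sqrt{KN}$ (and on $N$), never on $R$ or $K$ separately, as emphasized in the remark preceding the lemma. The only genuinely interesting step is the convexity observation in (iii), which converts the opaque defining formula for $p_0$ into the clean lower bound $p_0 \geq \mu/(4\log M)$ that drives all the subsequent estimates.
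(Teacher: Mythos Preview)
Your proposal is correct and matches the paper's own proof in structure: the paper also verifies (i) by direct algebra, derives $p_0 \geq \mu/(4\log M)$ from the inequality $x/e \leq 1-\log(e-x) \leq x/(e-1)$ (your convexity argument is precisely the tangent-line form of the lower bound), and bounds the series in (iv) by an equivalent manipulation (routing through a $\coth$ identity to reach $\sum_k (1+1/M)^{-kp_1} \leq 3M/p_1$, the same order as your $(M+1)/p_1$). One small caveat: the inequality $(1+x)^{-p} \leq 1 - px/(1+x)$ you invoke holds for $0 < p \leq 1$, not for all $p>0$ as written---but since $p_1 = p_0/(N\eta) < 1$ here, the application is sound.
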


\begin{proof}
i) can be verified directly.

\smallskip

To show ii), note the relation
\begin{equation}
\label{log relation}
\frac{x}{e}  \leq 1 - \log [e- x] \leq \frac{x}{e-1}
\end{equation}
we have then
\[
\frac{1}{p_0} \geq \log M /\mu.
\]
By the choice of $M, \mu, \delta_0$, it is clear that
\begin{equation}
e^{1/p_0} \geq \frac{1}{\delta_0}.
\end{equation}

\smallskip

To see iii), one just need to note $(1 + 1/M) \geq (1 + 1/M)^{p_1}$ for $p_1 < 1$.

\smallskip
By (Eq.\ref{log relation}), it is then easy to show
\[
p_0 \geq \frac{\mu}{4\log M} \geq e^{- CN^2 [R\sqrt{K/N} + 1]}
\]
The estimate of $ e^{1/p_0}$ then follows. 

\smallskip

Finally to show v),consider the following manipulation. Denote $b:=((M+1)/M)^{p_1}$
\[\begin{split}
\sum_{k=0}^{\infty} \biggl( \frac{M}{M+1}\biggr)^{p_1 k}& = \frac{1}{1 - e^{-\log b}} = \frac{1}{2} + \frac{1}{2} \frac{1 + e^{-\log b}}{1 - e^{-\log b}} \\
&  = \frac{1}{2} + \frac{1}{2} \coth[\frac{1}{2} \log (b)] = \frac{1}{2} + (\log b)^{-1}\tc(\frac{1}{2} \log (b)) \\
& \leq 1 + \frac{1}{\log(b)} \leq \frac{3 M }{p_1}.
\end{split}\]
In the last two inequalities, we have used (Eq.\ref{linear tc}) and an estimat of  $\log(1+1/M)$ similar to (Eq.\ref{log relation}). Then, we can then easily estimate $C_2 = C_1 (p_0)$ as stated.
\end{proof}

\bigskip

\section{Contact Set}

In this section, we investigate properties of contact sets (recall Defn.\ref{contact set}), in particular the behavior of the unknown function $u$ on its associated contact sets. We shall see the contact sets recognize the underline metric geometry in an elegant way.

First, we state an alternative characterization of contact set, which has mentioned in \cite{Cab}.
\begin{defn}
\label{parabolid}
The concave parabolid $P_{a,y}$ of vertex $y$ and opening $a$ is a function of the form
\[
P_{a, y} := -\frac{a}{2} \rho^2 (x, y) + c_y, \quad c_y, a \in \R, \; a \geq 0
\]
Similarly, one defines convex parabolid.
\end{defn}

\begin{prop}
Let $ u \in C(\overline{\Omega})$, $E \subset \Omega$ closed and $a \geq 0$. Then $x \in A (a, E, u)$ if and only if there exists a concave paraboloid $P_{a,y}$ of opening $a$ and vertex $y \in E$ that touches $u$ in $\overline{\Omega}$.
\end{prop}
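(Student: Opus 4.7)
The proposition is essentially a restatement of the definition of $A(a, E/\Omega, u)$ in geometric language, so my plan is to prove both directions by direct algebraic rearrangement of the defining inequality. The two sides of the equivalence differ only in how the inequality
\[
u(x) + a\rho^2(x,y) \leq u(z) + a\rho^2(z,y), \quad \forall z \in \overline{\Omega},
\]
is read: once as a statement about the infimum of $u + a\rho^2(\cdot,y)$, and once as a statement about a paraboloid lying below $u$.

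For the forward direction, I assume $x \in A(a, E/\Omega, u)$ and pick the corresponding vertex $y \in E$ from the definition. Setting the constant
\[
c_y := u(x) + a\rho^2(x,y)
\]
(adjusted by the factor of $1/2$ matching the convention in \textbf{Definition} \ref{parabolid}, so that the paraboloid opening is $a$ in the author's normalization), I rearrange the defining minimum inequality to obtain
\[
-a\rho^2(z,y) + c_y \leq u(z) \text{ on } \overline{\Omega}, \quad \text{with equality at } z = x.
\]
This is precisely the statement that $P_{a,y}$ touches $u$ from below at $x$ in $\overline{\Omega}$, with vertex $y \in E$.

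For the converse, I assume there is a concave paraboloid $P_{a,y}(z) = -a\rho^2(z,y) + c_y$ with $y \in E$ that touches $u$ from below at $x$ in $\overline{\Omega}$. From $P_{a,y} \leq u$ on $\overline{\Omega}$ and $P_{a,y}(x) = u(x)$, I transpose the $a\rho^2$ term to both sides to get
\[
c_y = u(x) + a\rho^2(x,y) \leq u(z) + a\rho^2(z,y), \quad \forall z \in \overline{\Omega},
\]
so that $x$ realizes the infimum of $u + a\rho^2(\cdot, y)$ over $\overline{\Omega}$. Since $y \in E$, this is exactly $x \in A(a, E/\Omega, u)$.

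The argument is entirely formal and uses no geometry of $\Mfd$ beyond the fact that $\rho$ is a continuous function; continuity of $u$ is used only implicitly in interpreting the infimum. The main (mild) obstacle is bookkeeping of the normalization constant between the $a\rho^2$ appearing in the contact-set definition and the $\tfrac{a}{2}\rho^2$ appearing in the paraboloid definition; once the opening is normalized consistently, the equivalence reduces to the trivial rearrangement above.
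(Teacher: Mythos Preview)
Your argument is correct and is exactly the approach the paper takes: the paper's proof is the single sentence ``Immediately follows from the definitions,'' and you have merely spelled out that unwinding. Your observation about the factor-of-two mismatch between $a\rho^2$ in Definition~\ref{contact set} and $\tfrac{a}{2}\rho^2$ in Definition~\ref{parabolid} is accurate --- this is a minor inconsistency in the paper's own conventions, not a flaw in your reasoning.
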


\begin{proof}
Immediately follows from the definitions (Defn.\ref{contact set}, Defn.\ref{touch} and Defn.\ref{parabolid}). 
\end{proof}

The following proposition contains some basic properties of contact sets (see. \cite{Savin}). Its proof is a routine check up and hence be omitted.
\begin{lem}
\label{basic contact}
Let $u \in C (\overline{\Omega})$ and $E \subset \Omega$ closed, then 

a) For all $a\geq 0$, $A (a ,E , u)$ is closed (hence $\nu$-measurable).

b) If $u_k \rightarrow u$ uniformly in $\Omega $, then
\[
\limsup_{k \rightarrow \infty} A (a, E/\Omega,  u_k) = \bigcap_{j =1}^{\infty} \bigcup_{k \geq j} A_{k} \subset A (a ,E/\Omega ,u).
\] 

c) if $a_{k} \rightarrow 0$, then
 \[
\limsup_{k \rightarrow \infty} A (a_{k}, E/\Omega, u) = \bigcap_{j =1}^{\infty} \bigcup_{k \geq j} A_{k} \subset A (0 ,E/\Omega ,u).
\] 

d) if $E \subset F$, then
\[
A (a, E/\Omega, u) \subset A (a, F/\Omega, u).
\]
\end{lem}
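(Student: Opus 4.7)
The plan is to dispatch (a)--(d) by a uniform compactness-plus-passage-to-the-limit pattern, using three standing facts: $\overline{\Omega}$ is bounded (so $\rho^2(\cdot,\cdot)$ is bounded on $\overline{\Omega}\times\overline{\Omega}$), both $u$ and $\rho$ are continuous, and $E$ is compact, so any sequence in $E$ admits a convergent subsequence whose limit remains in $E$. Part (d) needs no real work: if a concave paraboloid with vertex $y\in E\subset F$ touches $u$ at $x$, the same paraboloid serves as a witness in $F$, so $A(a,E/\Omega,u)\subset A(a,F/\Omega,u)$.

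For (a), I would take $x_k\in A(a,E/\Omega,u)$ with $x_k\to x^\ast\in\overline{\Omega}$ together with witnesses $y_k\in E$ satisfying
\[
u(x)+a\rho^2(x,y_k)\;\ge\; u(x_k)+a\rho^2(x_k,y_k)\qquad\forall\,x\in\overline{\Omega},
\]
extract a subsequence so that $y_{k_j}\to y^\ast\in E$, and fix an arbitrary $x\in\overline{\Omega}$. Joint continuity of $\rho^2$ and continuity of $u$ then let one pass to the limit and identify $y^\ast$ as a witness for $x^\ast$, giving $x^\ast\in A(a,E/\Omega,u)$; hence the set is closed, in particular Borel and therefore $\nu$-measurable.

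For (b), unwinding the definition $\limsup_k A_k=\bigcap_j\bigcup_{k\ge j}A_k$, a point $x^\ast\in\limsup_k A(a,E/\Omega,u_k)$ already belongs to $A(a,E/\Omega,u_{k_j})$ for some subsequence $k_j\to\infty$, with corresponding witnesses $y_{k_j}\in E$. Passing to a further subsequence so that $y_{k_j}\to y^\ast\in E$, and then using uniform convergence $u_{k_j}\to u$ to pass to the limit in
\[
u_{k_j}(x)+a\rho^2(x,y_{k_j})\;\ge\; u_{k_j}(x^\ast)+a\rho^2(x^\ast,y_{k_j})\qquad\forall\,x\in\overline{\Omega},
\]
produces the analogous inequality for $u$ with witness $y^\ast$, so $x^\ast\in A(a,E/\Omega,u)$. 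The mild subtlety is that uniform convergence is needed to control both sides of the inequality simultaneously.

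For (c), the same unwinding yields a subsequence $k_j\to\infty$ and witnesses $y_{k_j}\in E$ with
\[
u(x)-u(x^\ast)\;\ge\; a_{k_j}\bigl[\rho^2(x^\ast,y_{k_j})-\rho^2(x,y_{k_j})\bigr].
\]
Since $\overline{\Omega}$ is bounded, the bracketed term stays uniformly bounded in $j$, so $a_{k_j}\to 0$ kills it in the limit and leaves $u(x)\ge u(x^\ast)$ for every $x\in\overline{\Omega}$, which is precisely $x^\ast\in A(0,E/\Omega,u)$ (the witness is immaterial when $a=0$, and in any case the limit $y^\ast$ of a subsequence of $y_{k_j}$ lies in $E$). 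I foresee no real obstacle: only the two compactness extractions for the witnesses $y_{k_j}\in E$ and the boundedness of $\rho^2$ on $\overline{\Omega}$ needed in (c) require any attention, which is exactly why the authors call the proof a routine check.
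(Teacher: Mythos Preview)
Your proposal is correct and follows precisely the standard compactness-and-pass-to-the-limit verification one would expect; the paper itself omits the proof entirely, stating only that it ``is a routine check up and hence be omitted.'' There is nothing to compare, and your handling of the three nontrivial parts (extracting convergent witnesses in $E$ for (a) and (b), and using boundedness of $\rho^2$ on $\overline{\Omega}\times\overline{\Omega}$ to kill the right-hand side in (c)) is exactly the intended routine.
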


\begin{rem}
Though we shall only need $a)$ of the previous lemma in this paper, the other three properties are of practical value. We shall illustrate some of their applications in a separated paper.
\end{rem}

\smallskip

To consider the interaction between contact sets and the underline metric geometry, we shall need the following notion and proposition from standard Riemannian geometry. 

We first recall the Hessian bound in support sense introduced by Calabi \cite{Calabi} (also see \cite{Peter}).
\begin{defn}
\label{support sense}
Let $u \in C(\Omega)$. We say $D^2 w \geq \beta g, \beta \in \R$ in support sense at $x_0$ if for every $\epsilon > 0$, there exists a smooth function $\varphi_{\epsilon}$ defined in a neighborhood of $x_0$ such that i) $\varphi_{\epsilon}$ touches $w$ from below at $x_0$. ii) $D^2 \varphi_{\epsilon} (x_0) \geq (\beta - \epsilon) g (x_0) $. Similarly, one define $D^2 w \leq \beta g$ in support sense.
\end{defn}

The following well-known property of the distance function is useful (see. p.342 \cite{Peter})

\begin{prop}
\label{Hessian bound of distance function}
Let $(\Mfd, g)$ be a (smooth) Riemannian manifold. Given any $y \in \Mfd$,  $\nabla^2 \rho_y^2$ is locally bounded above in support sense, that is, for any compact set $Z$, there exists a constant $\mathcal{L}$(depending on $\diam{Z}$ and the sectional curvature lower bound over $Z$), such that 
\[
D^2 \rho_{y}^2  (x)  \leq   \mathcal{L}\;  g , \quad \forall x\in Z.
\]
in support sense. 
\end{prop}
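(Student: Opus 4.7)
The plan is to split into two cases according to whether $x_0 \in Cut(y)$, handling the smooth case by a direct Hessian comparison and the cut-locus case by constructing smooth upper barriers whose Hessians at $x_0$ majorize the desired bound up to an $\epsilon$-error.

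Off the cut locus, $\rho_y^2$ is smooth and
\[
D^2 \rho_y^2 \;=\; 2\, d\rho_y \otimes d\rho_y \;+\; 2\rho_y\, D^2 \rho_y .
\]
I would then invoke the classical Hessian comparison theorem: if the sectional curvature on $Z$ is bounded below by $-\kappa^2$, then on directions orthogonal to $\nabla\rho_y$ one has $D^2 \rho_y \leq \kappa \coth(\kappa\rho_y)(g - d\rho_y \otimes d\rho_y)$, while the radial component vanishes. Substituting and using the notation of (\ref{tc, st}) yields $D^2 \rho_y^2 \leq 2\,\tc(\kappa \rho_y)\, g$, and by (\ref{linear tc}) this is bounded by $2(1+\kappa\rho_y)g$. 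Since $Z$ is compact, $\rho_y|_Z$ is bounded (by $\rho_y(x_\ast) + \diam{Z}$ for any fixed $x_\ast\in Z$) and so is $\kappa$, producing an explicit constant $\mathcal{L}$ on $Z \setminus Cut(y)$.

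For $x_0 \in Z \cap Cut(y)$ the function $\rho_y^2$ may fail to be $C^2$, so I would produce the smooth upper barriers required by Defn.\ref{support sense}. Fix a minimizing geodesic $\gamma:[0,\ell]\to\Mfd$ from $y$ to $x_0$ and set $y_\epsilon := \gamma(\epsilon)$ for small $\epsilon > 0$. The triangle inequality $\rho(x,y) \leq \rho(x,y_\epsilon) + \epsilon$ holds with equality at $x = x_0$, so
\[
\varphi_\epsilon(x) \;:=\; \bigl(\rho(x,y_\epsilon) + \epsilon\bigr)^2
\]
touches $\rho_y^2$ from above at $x_0$. The crucial geometric claim is that $x_0 \notin Cut(y_\epsilon)$ for all sufficiently small $\epsilon > 0$, so that $\varphi_\epsilon$ is smooth in a neighborhood of $x_0$. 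Granting this, the first-step bound applied to $\varphi_\epsilon$ gives $D^2 \varphi_\epsilon(x_0) \leq 2\tc(\kappa(\rho_y(x_0)-\epsilon))(1+O(\epsilon))\, g$, and taking $\epsilon\downarrow 0$ verifies the support-sense inequality with the same $\mathcal{L}$.

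The main obstacle is the perturbation claim $x_0 \notin Cut(y_\epsilon)$. The cut-point portion is routine: $\gamma|_{[\epsilon,\ell]}$ inherits minimality from $\gamma$, and uniqueness of this minimizer can be enforced by an arbitrarily small additional perturbation of $y_\epsilon$ if needed. The focal/conjugate portion is more delicate: if $x_0$ is the first conjugate point of $y$ along $\gamma$, one must show the first conjugate point of $y_\epsilon$ along $\gamma$ lies strictly beyond $x_0$ once $\epsilon > 0$. This rests on the smooth dependence of Jacobi fields and of the zeros of the Jacobi endpoint map on the base point; the effect is already visible on the round sphere, where advancing the base toward the antipode pushes the conjugate point forward by the same amount. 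Uniformity of $\mathcal{L}$ over $Z$ then follows from compactness of $Z$ and continuity of all the ingredients in $x$.
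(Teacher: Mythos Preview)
The paper does not give its own proof of this proposition; it simply cites Petersen's textbook (p.~342 of \cite{Peter}). Your outline is exactly the standard argument found there: Hessian comparison off the cut locus, and Calabi's trick of sliding the base point to $y_\epsilon=\gamma(\epsilon)$ to produce a smooth upper barrier $\varphi_\epsilon=(\rho_{y_\epsilon}+\epsilon)^2$ at cut points. So your approach matches the paper's intended reference.

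Your only hesitation, the conjugate-point part of the claim $x_0\notin Cut(y_\epsilon)$, is handled more cleanly than by ``smooth dependence of zeros of the Jacobi endpoint map.'' Suppose $x_0=\gamma(\ell)$ were conjugate to $y_\epsilon=\gamma(\epsilon)$, with nontrivial Jacobi field $J$ on $[\epsilon,\ell]$ vanishing at both ends. Extend $J$ by zero on $[0,\epsilon]$ to get a piecewise-smooth field $\tilde J$ along $\gamma|_{[0,\ell]}$ vanishing at the endpoints, with index form $I(\tilde J,\tilde J)=0$. Since $\gamma|_{[0,\ell]}$ is minimizing, $I\geq 0$, so $\tilde J$ minimizes $I$; the first-variation argument then forces $\tilde J$ to be a genuine Jacobi field on all of $[0,\ell]$. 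But $\tilde J\equiv 0$ on $[0,\epsilon]$ forces $\tilde J\equiv 0$ everywhere, contradicting $J\not\equiv 0$. This is the argument you want, and it avoids any appeal to perturbation of conjugate loci.
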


\begin{rem}
$C^2$-functions are locally bounded above in support sense.
\end{rem}

The following lemma contains a key feature of the contact sets.
\begin{lem}
\label{cut-focal free}
Let $u \in C(\overline{\Omega})$. Let $E \subset \Omega$ be closed. Let $a \geq 0$. Suppose $A (a, E/\Omega, u) \subset \Omega$ and $u$ is locally bounded above in support sense in $\Omega$ , then the following statement holds. 

if $y \in E$ and paraboloids $P_{a, y}$ touches $u$ at $x \in A (a, E/\Omega , u)$, then $x$ and $y$ are neither cut-points nor focal points for each other and hence $P_{a,y}$ is smooth at $x$.  
\end{lem}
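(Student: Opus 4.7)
The plan is a proof by contradiction. Assume $y \in Cut(x)$, i.e., $y$ is either a cut or a focal point of $x$. I will reach an impossibility by translating the contact condition into a lower Hessian bound on $\rho^2(\cdot,y)$ at $x$ that classical Riemannian geometry forbids at cut/focal points.

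First, I rewrite the contact condition as a smooth lower support for $\rho_y^2 := \rho^2(\cdot,y)$. Since $P_{a,y}$ touches $u$ from below at $x$, for $z$ in a neighborhood of $x$ we have $u(z) + \tfrac{a}{2}\rho_y^2(z) \geq u(x) + \tfrac{a}{2}\rho_y^2(x)$, with equality at $x$. By the hypothesis that $u$ is locally bounded above in support sense (Defn.~\ref{support sense}), for every $\epsilon > 0$ there is a smooth $\varphi_\epsilon$ touching $u$ from above at $x$ with $D^2 \varphi_\epsilon(x) \leq (\mathcal{L}+\epsilon)g$. Using $u(x) - u(z) \geq \varphi_\epsilon(x) - \varphi_\epsilon(z)$, rearranging the contact inequality yields
\[
\rho_y^2(z) \;\geq\; \rho_y^2(x) + \tfrac{2}{a}\bigl[\varphi_\epsilon(x) - \varphi_\epsilon(z)\bigr] \;=:\; \psi_\epsilon(z),
\]
where $\psi_\epsilon$ is smooth, $\psi_\epsilon(x) = \rho_y^2(x)$, and $D^2\psi_\epsilon(x) \geq -\tfrac{2(\mathcal{L}+\epsilon)}{a}\, g$. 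Thus $\rho_y^2$ admits a smooth lower support at $x$ with uniformly bounded Hessian.

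Next, I split into two cases. In the \emph{non-focal cut} case, there exist two distinct unit-speed minimizing geodesics $\gamma_1,\gamma_2$ from $x$ to $y$ with initial velocities $v_1 \neq v_2$. Near $x$, $\rho_y$ is the pointwise minimum of the two smooth branches $\rho_{y,i}$, each with $D\rho_{y,i}(x) = -v_i$ by the first variation formula. For any $w \in T_xM$ with $\langle w,v_1\rangle \neq \langle w,v_2\rangle$, a direct one-sided computation gives right and left derivatives of $\rho_y^2$ at $x$ along $\exp_x(tw)$ equal to $-2\rho_y(x)\max_i\langle w,v_i\rangle$ and $-2\rho_y(x)\min_i\langle w,v_i\rangle$ respectively. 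The smooth $\psi_\epsilon$ from the first step, being tangent to $\rho_y^2$ from below at $x$, must then satisfy
\[
-2\rho_y(x)\min_i\langle w,v_i\rangle \;\leq\; \langle D\psi_\epsilon(x),w\rangle \;\leq\; -2\rho_y(x)\max_i\langle w,v_i\rangle,
\]
which is impossible since $\min_i < \max_i$. In the \emph{focal} case, let $J$ be a nontrivial Jacobi field along the minimizing geodesic $\gamma$ with $J(0) = J(\rho(x,y)) = 0$. A standard second-variation / Jacobi-field argument shows the Hessian of $\rho_y^2$ at $x$ in the direction $J'(0)$ is unbounded below in the support sense, which contradicts the finite lower Hessian bound on $\psi_\epsilon$.

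Having excluded both cases, $y \notin Cut(x)$, so $\exp_x$ is a local diffeomorphism at $\exp_x^{-1}(y)$ and $\rho^2(\cdot,y)$ is smooth near $x$; in particular $P_{a,y}$ is smooth at $x$. The main obstacle is the focal case above: the two-branch first-order argument handles the non-focal cut case with essentially no regularity beyond $C^1$, but the focal case genuinely requires the Jacobi-field / second variation of arc-length apparatus from Riemannian comparison geometry to certify the Hessian blow-up of $\rho_y^2$ along the vanishing-Jacobi direction.
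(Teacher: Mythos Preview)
Your argument is correct and follows the same overall strategy as the paper's: convert the contact condition into a smooth support for $\rho_y^2$ at $x$, then show this is incompatible with $x$ and $y$ being cut or focal. The paper, however, also invokes Prop.~\ref{Hessian bound of distance function} (the upper Hessian bound on $\rho_y^2$) to obtain a \emph{two-sided} smooth sandwich for $P_{a,y}$, from which it reads off differentiability directly (ruling out cut points) and finiteness of the symmetric second increment quotient (ruling out focal points, by citing Prop.~2.5 of \cite{MDS}). You instead work with only the one-sided lower support for $\rho_y^2$ furnished by the hypothesis on $u$, and handle each case by hand: a left/right-derivative comparison against the two geodesic branches for the non-focal cut case, and the Jacobi-field Hessian blow-up for the focal case. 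This shows that Prop.~\ref{Hessian bound of distance function} is not actually needed for the lemma; the trade-off is that your focal step, while morally correct, is exactly the nontrivial fact the paper outsources to \cite{MDS}, and the indicated direction ``$J'(0)$'' would need to be made precise (one really builds a variation from the vanishing Jacobi field to exhibit arbitrarily negative second variation of $\rho_y^2$, rather than pointing to a single tangent vector).
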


\begin{proof}
By the contact relation, the definition of bounded in support sense and Prop.\ref{Hessian bound of distance function}, there are two smooth function $\varphi^+, \varphi_-$ such that $\varphi^+$ touches $P_{a,y}$ from above at $x$ and $\varphi_-$ touches it from below at $x$. It follows immediately that $P_{a,y}$ is differentiable at $x$. By the standard Riemannian geometry, $x$ and $y$ are not cut-points of each other. 

To show $x,y$ are not focal points to each other, consider the limit of the second order increment quotient
\[
\Delta^2 P_{a,y} (x):= \limsup_{\abs{W}\rightarrow 0} \frac{P_{a,y} (\exp_{x} W) + P_{a,y} (\exp_{x}-W) - 2 P_{a,y} (x)}{\abs{W}}.
\]
The existence of $\varphi^+, \varphi_{-}$ shows that
\[
\abs{\Delta^2 P_{a,y} (x) }< \infty
\]
By Prop.2.5 of \cite{MDS}, this eliminates the possibility that $x, y$ are focal points to each other.
\end{proof}

\medskip
Next lemma relates contact sets, the sub-level sets of $u$ and the domain. Such a statement has indeed been used in \cite{Cab}.

\begin{lem}
\label{contact location} 
Let $u \in C(\overline{B}_{r} (x_0 ))$. Let $u (y_0) = l$ for some $y_0 \in \overline{B}_{r/2} (x_0)$ and $u \geq t $ in $B_{5r/6} (x_0)$. 

Suppose $l < t$. then for any $a \geq 0$
\[
A (a, \overline{B}_{r/6} (y_0) / B_{r} (x_0), u) \subset B_{5r/6} (x_0) \cap \{ u\leq l + \frac{ar^2}{36} \}
\]
\end{lem}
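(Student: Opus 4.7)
The plan is an elementary unwrapping of the contact set definition, testing the defining infimum against the distinguished point $y_0$ and then applying the triangle inequality. Let $x$ belong to the contact set; by Definition \ref{contact set}, there exists $y \in \overline{B}_{r/6}(y_0)$ with
\[
u(x) + a\rho^2(x,y) \;=\; \inf_{z \in \overline{B_r(x_0)}}\{u(z) + a\rho^2(z,y)\}.
\]

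For the sublevel inclusion $u(x) \leq l + ar^2/36$, my approach is to substitute $y_0$ itself into this infimum. Since $y_0 \in \overline{B}_{r/2}(x_0) \subset \overline{B_r(x_0)}$ is an admissible competitor and $\rho(y_0, y) \leq r/6$ by the choice of $y$, one immediately obtains $u(x) + a\rho^2(x,y) \leq l + a r^2/36$, and dropping the nonnegative term $a\rho^2(x,y)$ gives the claim.

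For the location claim $x \in B_{5r/6}(x_0)$, I will argue by contradiction. Assuming $\rho(x, x_0) \geq 5r/6$, two applications of the triangle inequality --- using $\rho(y_0, x_0) \leq r/2$ and $\rho(y_0, y) \leq r/6$ --- force
\[
\rho(x,y) \;\geq\; \rho(x,x_0) - \rho(x_0,y_0) - \rho(y_0,y) \;\geq\; \tfrac{5r}{6} - \tfrac{r}{2} - \tfrac{r}{6} \;=\; \tfrac{r}{6},
\]
so $a\rho^2(x,y) \geq ar^2/36$. Combined with the hypothesized lower bound $u(x) \geq t$ holding away from $B_{5r/6}(x_0)$, the infimum bound above degenerates to $t + ar^2/36 \leq l + ar^2/36$, which contradicts $l < t$.

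No step here is genuinely difficult; the whole lemma is two triangle inequalities plus one test-point comparison against $y_0$. The only delicate point is reading the hypothesis ``$u \geq t$ in $B_{5r/6}(x_0)$'': as typeset it is inconsistent with $u(y_0)=l<t$ and $y_0\in\overline{B}_{r/2}(x_0)\subset B_{5r/6}(x_0)$, so the intended reading must be $u\geq t$ on $\overline{B_r(x_0)}\setminus B_{5r/6}(x_0)$ --- the standard annular condition appearing in Krylov--Safonov growth lemmas and precisely what the contradiction step exploits.
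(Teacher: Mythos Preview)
Your proof is correct and follows essentially the same route as the paper: test the contact condition at $y_0$ to get the sublevel bound, then argue by contradiction via the triangle inequality for the location. Your diagnosis of the hypothesis is also right---the paper's own proof uses $u(x_1)\geq t$ precisely at a point $x_1\in \overline{B_r(x_0)}\setminus B_{5r/6}(x_0)$, and the application in Lemma~\ref{local growth} supplies the lower bound on an annulus, so the intended condition is indeed $u\geq t$ on $\overline{B_r(x_0)}\setminus B_{5r/6}(x_0)$.
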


\begin{proof}
Let $P_{a, y_1}$ be a polynomial touching $u$ at $x_1$. By the contact relation, we have 
\begin{equation}
\label{touch 1}
u(y_0) \geq P_{a, y_1} (y_0) = -\frac{a}{2} \rho^2 (y_0, y_1) + \frac{a}{2} \rho^2 (x_1, y_1) + u (x_1).
\end{equation}
Thus, we immediately have
\[
u (x_1) \leq P_{a,y_1} (y_0) + \frac{a}{2} \rho^2 (y_0, y_1) \leq l + \frac{ar^2}{36}.
\]
So it suffices to show $x_1 \in B_{5r/6} (x_0)$.

Suppose on the contrary that $x_1 \in B_{r}(x_0) \setminus B_{5r/6} (x_0)$, then, by $y_0 \in B_{r/2} (x_0)$ and $y_1 \in B_{r/6} (y_0)$, we have
\[
\rho (x_1, y_1) \geq \frac{r}{6},
\]
Henceforth
\begin{equation}
\label{touch 2}
\rho^2 (x_1, y_1) - \rho^2 (y_0, y_1) \geq 0
\end{equation}

However, by (Eq.\ref{touch 1})
\begin{equation}
\label{touch 3}
 \frac{a}{2}\bigl( \rho^2 (x_1, y_1) -  \rho^2 (y_0, y_1) \bigr)\leq u (y_0) - u (x_1) \leq l- t < 0
\end{equation}
Since $a \geq 0$, (Eq.\ref{touch 2}) contradicts to (Eq.\ref{touch 3}).
\end{proof}

\begin{rem}
Though sufficing for this paper, this is not the most precise relation between contact sets and sub-level sets. However, the proof of above lemma has suggested how one might control the relative position of contact sets, sub-level sets and domain.
\end{rem}

\begin{rem}
On space with special feature in metric geometry, such as Euclidean space which has parallelogram law, very precise relation can be draw regarding the relative location of contact sets with different opening (see. \cite{Savin}).
\end{rem}

\bigskip

\section{Jacobi Equation and Jacobi Determinant}
In this section, we quote some important results regarding the Jacobi equation and their geometrical implication from \cite{Villani}. They are of fundamental importance in our development. In particular, we shall see how contact sets match with the Jacobi determinant (Prop.\ref{geometric Jacobi}). The content in this section follows closely to the Chapter 14 (p.365-372, p.379-383) in \cite{Villani} and its third Appendix (p.412-418 ).

\begin{defn}
Let $R (t)$ be a $t$-dependent symmetric matrix. The Jacobi equation associate to $R(t)$ is the following ODE
\begin{equation}
\label{Jacobi eq}
\ddot{J} (t) + R (t) \cdot J (t) = 0.
\end{equation}
Solutions to Eq.\ref{Jacobi eq} are called Jacobi matrices.
\end{defn}
In the sequel, we shall always assume the time interval to be $[0,1]$.

The following propositions contain the main properties of the Jacobi equation that supports our development (proof, see. p.429-432, \cite{Villani}).

\begin{prop}
\label{positivity of S(t)}
Let $J^{1}_0$and $J^0_1$ be Jacobi matrices defined by the initial conditions
\[
J^{1}_0 = \dot{J^0_1} = I, \quad \dot{J}^1_0 = J^0_1 = 0. 
\]
Assume $J^0_1$ is invertible for all $t\in (0, 1]$. Then,
\[
S (t) := [ J^0_1 (t)]^{-1} J^1_0 (t)
\]
is symmetric for all $t\in (0,1]$ and decreases monotonically.
\end{prop}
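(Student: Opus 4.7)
The proof hinges on two conserved ``Wronskian'' quantities that come for free from the symmetry of $R(t)$. For any two solutions $A,B$ of $\ddot J + RJ = 0$ one checks directly that $W(A,B) := A^T \dot B - \dot A^T B$ is constant in $t$: indeed $\dot W = A^T \ddot B - \ddot A^T B = -A^T R B + A^T R B = 0$, using $R^T = R$. Applying this to the pairs $(J^1_0, J^0_1)$ and $(J^0_1, J^0_1)$ and evaluating at $t=0$ using the stated initial data yields the two identities I shall rely on:
\[
(J^1_0)^T \dot J^0_1 - (\dot J^1_0)^T J^0_1 \equiv I \quad \text{(I)}, \qquad (J^0_1)^T \dot J^0_1 - (\dot J^0_1)^T J^0_1 \equiv 0 \quad \text{(II)}.
\]

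For the symmetry of $S$, my plan is to prove $J^1_0 (J^0_1)^T = J^0_1 (J^1_0)^T$ on $(0,1]$; left-multiplying by $(J^0_1)^{-1}$ and right-multiplying by $(J^0_1)^{-T}$ then delivers $S = S^T$. I will obtain this by packaging the two fundamental Jacobi solutions into the $2n \times 2n$ fundamental matrix
\[
\Phi(t) := \begin{pmatrix} J^1_0 & J^0_1 \\ \dot J^1_0 & \dot J^0_1 \end{pmatrix}, \qquad \Omega := \begin{pmatrix} 0 & I \\ -I & 0 \end{pmatrix},
\]
and observing that $\tfrac{d}{dt}(\Phi^T \Omega \Phi) = 0$ (again a direct consequence of $R^T = R$). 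Since $\Phi(0) = I_{2n}$ this forces $\Phi^T \Omega \Phi \equiv \Omega$, i.e.\ $\Phi$ is symplectic; the companion identity $\Phi \Omega \Phi^T \equiv \Omega$ then follows from the symplectic group structure, and its $(1,1)$-block is precisely $J^1_0 (J^0_1)^T - J^0_1 (J^1_0)^T = 0$.

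For monotonicity I will compute $\dot S$ directly. Using $J^1_0 = J^0_1 S$ and the symmetry $S = S^T$ just proved (which also gives $\dot S^T = \dot S$), transposition yields $(J^1_0)^T = S (J^0_1)^T$ and $(\dot J^1_0)^T = S (\dot J^0_1)^T + \dot S (J^0_1)^T$. Substituting into identity (I) produces
\[
S \left[ (J^0_1)^T \dot J^0_1 - (\dot J^0_1)^T J^0_1 \right] - \dot S \, (J^0_1)^T J^0_1 = I,
\]
and the bracket vanishes by (II), leaving $\dot S(t) = -\bigl[(J^0_1(t))^T J^0_1(t)\bigr]^{-1}$. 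Since $J^0_1(t)$ is invertible on $(0,1]$, the right-hand side is strictly negative definite, so $S$ decreases monotonically in the Loewner sense. The only real delicacy is the bookkeeping of transposes and of left-versus-right multiplications in the Wronskian manipulations; there is no deep analytic subtlety, and the whole argument rests on the symmetry of $R$ together with identities (I) and (II).
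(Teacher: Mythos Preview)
Your argument is correct: the Wronskian identities (I) and (II) follow from the symmetry of $R(t)$, the symplectic packaging gives $J^1_0 (J^0_1)^T = J^0_1 (J^1_0)^T$ and hence $S=S^T$, and the substitution into (I) combined with (II) yields $\dot S = -\bigl[(J^0_1)^T J^0_1\bigr]^{-1} < 0$. The paper itself does not prove this proposition but defers to Villani (Ch.~14, third Appendix); your proof is in fact the standard Wronskian/symplectic argument found there, so you have essentially reconstructed the cited proof.
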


\begin{rem}
The original statement in the book \cite{Villani} also states that $S(t)$ is positive for all $t\in [0, 1)$. We have confirmed with the author that the statement $S(t)$ is positive is merely a typo. In particular, the material in the third Appendix of Ch.14 in \cite{Villani} does not rely on positivity of $S(t)$.
\end{rem}

\begin{prop}
\label{good initial condition}
Let $S(t)$ be the matrix defined in Prop.\ref{positivity of S(t)}. Let $J (t)$ be a Jacobi matrix satisfies the initial conditions
\[
J (0) = I, \quad  \dot{J} (0) \text{ is  symmetric}
\]
Then the following properties are equivalent

i) $\dot{J} (0) + S(1) \geq 0$;

ii) $\det J (t) > 0 $ for all $ t \in [0,1)$.
\end{prop}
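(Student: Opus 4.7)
My plan is to reduce both directions to a statement about the symmetric matrix $T(t) := S(t) + \dot{J}(0)$ via a convenient factorization of $J(t)$, and then to use a \emph{strict} monotonicity of $S$ as the main analytic input. Since $J^1_0$ and $J^0_1$ span the solution space of the linear Jacobi equation, any solution with $J(0) = I$ takes the form $J(t) = J^1_0(t) + J^0_1(t)\,\dot J(0)$. For $t \in (0,1]$, invertibility of $J^0_1(t)$ (the hypothesis of Prop.\ref{positivity of S(t)}) allows me to factor
\[
J(t) = J^0_1(t)\bigl(S(t) + \dot J(0)\bigr) = J^0_1(t)\,T(t).
\]
A Taylor expansion gives $J^0_1(t) = tI + O(t^2)$, so $\det J^0_1(t) > 0$ in a right-neighbourhood of $0$; by continuity and nonvanishing on $(0,1]$ this positivity persists throughout. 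Combined with $\det J(0) = 1$, statement (ii) is therefore equivalent to $\det T(t) > 0$ for all $t \in (0,1)$.

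The crux is to upgrade the monotone decrease of $S$ to a strict decrease. I would do this by differentiating $S = (J^0_1)^{-1} J^1_0$ and simplifying with the symplectic Wronskian: since $R(t)$ is symmetric, for any two Jacobi matrices $J_1,J_2$ the quantity $J_1^T \dot J_2 - \dot J_1^T J_2$ is constant in $t$. Evaluating at $t=0$ gives both $(J^0_1)^T \dot{J^1_0} - (\dot{J^0_1})^T J^1_0 \equiv -I$ and the symmetry of $(J^0_1)^T \dot{J^0_1}$. Substituting these into $\dot S = (J^0_1)^{-1}[\dot{J^1_0} - \dot{J^0_1} S]$ should collapse to the clean identity
\[
\dot S(t) = -\bigl[J^0_1(t)(J^0_1(t))^T\bigr]^{-1},
\]
which is strictly negative definite on $(0,1]$. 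Hence $S(t) - S(1)$ is positive definite for every $t \in (0,1)$. A short calculation from the initial data also shows $S(t) \to +\infty$ as $t \to 0^+$, since $J^0_1(t)^{-1} \sim t^{-1} I$ while $J^1_0(t) \to I$.

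With these ingredients the two implications are short. For (i) $\Rightarrow$ (ii), writing $T(t) = T(1) + (S(t) - S(1))$ with $T(1) \geq 0$ and $S(t) - S(1) > 0$ gives $T(t) > 0$ on $(0,1)$, so $\det T(t) > 0$ and (ii) follows from the factorization. For (ii) $\Rightarrow$ (i), I argue by contrapositive: if $T(1) \not\geq 0$ then $\lambda_{\min}(T(1)) < 0$, while $\lambda_{\min}(T(t)) \to +\infty$ as $t \to 0^+$ from $S(t) \to +\infty$. Since $t \mapsto \lambda_{\min}(T(t))$ is continuous, the intermediate value theorem produces some $t_* \in (0,1)$ with $\lambda_{\min}(T(t_*)) = 0$, whence $\det T(t_*) = 0$ and consequently $\det J(t_*) = 0$, contradicting (ii). The main obstacle is the bookkeeping in the Wronskian computation leading to $\dot S = -[J^0_1(J^0_1)^T]^{-1}$ (the non-commutativity of matrix products has to be handled carefully); once that strict-monotonicity input is secured, the rest is a continuity/IVT argument through the reduction to $T$.
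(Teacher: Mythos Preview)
Your argument is correct and is essentially the proof given in the cited reference (Villani, Ch.~14, third Appendix), to which the paper defers without supplying its own proof. The factorization $J(t)=J^0_1(t)\bigl(S(t)+\dot J(0)\bigr)$, the Wronskian identities, the derivative formula for $S$, and the IVT argument on the smallest eigenvalue are exactly the ingredients used there.

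One small bookkeeping point (precisely of the kind you anticipated): carrying out the Wronskian simplification gives
\[
(J^0_1)^T J^0_1\,\dot S \;=\; (J^0_1)^T\dot{J^1_0}-(\dot{J^0_1})^T J^1_0 \;=\; -I,
\]
so the correct identity is $\dot S(t)=-\bigl[(J^0_1(t))^T J^0_1(t)\bigr]^{-1}$, with the transposes in the opposite order from what you wrote. This does not affect the argument, since either expression is the inverse of a Gram matrix and hence strictly positive definite; the strict monotonicity of $S$ and both implications go through unchanged.
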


Now, we related the above pure ODE results to some geometry of Jacobi fields on Riemannian manifold. The following discussion follows closely to \cite{Villani}.

Let $(\Mfd, g)$ be a Riemannian manifold, given a geodesic $\gamma (t)$, one may parallel transport an orthonormal frame $e(0)$ at $T_{\gamma (0)} \Mfd$ along $\gamma (t)$ to obtain frame $e (t) $ at $T_{\gamma (t)} \Mfd$. Then, a family of sections $H (t) \in \mathrm{Sym} T_{\gamma (t) } \Mfd  $ can be canonically identify to a family of symmetric matrices parametrized by $t$. In the rest of this paper, we shall always use this identification whenever necessary. Eigenvalues of $H (t)$ are independent of choice of the frame $e(t)$.

Consider the flow $F[u] (t, \cdot)$:
\begin{defn}
\label{map F[u]}
Let $u \in C^2(\Omega)$. Define
\[
F[ u] (t , \cdot ) : \Omega \rightarrow \Mfd ,  x \mapsto \exp_{x} ( t\nabla u (x)).
\]
For our convenience, we shall denote $F[u] = F[u](1, \cdot)$; and the notation $F_{t} [u]$ and $F [u] (t, \cdot)$ are used interchangeably. We also denote the Jacobi transformation $d F[u]$ by $J[u]$.
\end{defn}

The next proposition contains some geometric implication of the previous two propositions (see the discussion on p.413-414 in \cite{Villani}).

\begin{prop}
\label{geometric Jacobi}
Let $x, y \in \Mfd$. Suppose $x, y$ are neither cut-points nor focal points to each other. Then following statements holds:

i)  $J[u] (t, x)$ is a smooth (w.r.t time $t$) Jacobi matrix associated to 
\[
R^i_j (t,x )= \mathrm{Riem} (\dot{\gamma} (t,x), e_i (t), \dot{\gamma} (t,x), e_j (t) ).
\]
with initial conditions
\[
J (0, x) = I, \quad \dot{J } (0,x) = \nabla^2 u (x).
\]
along the cure
\[
\gamma [u] (t ,x ):= \exp_{x} (t \nabla u (x))
\]

\smallskip

ii) if
\[
\nabla^2 u (x) + \nabla^2 (\frac{1}{2}\rho_{y}^2 )  (x) \geq 0
\]
Then, $\det J[u] (t,x ) \geq 0 $ for all $t \in [0,1]$.

\smallskip

\end{prop}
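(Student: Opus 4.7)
The plan is to recognize $F[u](t,\cdot)$ as a geodesic variation and then apply the ODE results of Propositions \ref{positivity of S(t)} and \ref{good initial condition}. Throughout, I fix $x\in\Omega$ and a parallel orthonormal frame $e_i(t)$ along the geodesic $\gamma[u](t,x)=\exp_x(t\nabla u(x))$, so that symmetric endomorphisms of $T_{\gamma[u](t,x)}\Mfd$ are identified with symmetric matrices.

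For part (i), I would pick a smooth curve $c(s)$ in $\Omega$ with $c(0)=x$, $c'(0)=W$, and look at the two-parameter family $(t,s)\mapsto F[u](t,c(s))=\exp_{c(s)}(t\nabla u(c(s)))$. Since for each fixed $s$ this is a geodesic in $t$, the variation field $\mathcal{J}_W(t):=\partial_s F[u](t,c(s))|_{s=0}$ is a Jacobi field along $\gamma[u](t,x)$, i.e.\ it satisfies $\ddot{\mathcal{J}}_W+R(t)\mathcal{J}_W=0$ with $R^i_j(t,x)=\mathrm{Riem}(\dot\gamma,e_i,\dot\gamma,e_j)$. Letting $W$ range over an orthonormal basis of $T_x\Mfd$ assembles these fields into the matrix $J[u](t,x)$, which is therefore a Jacobi matrix for $R(t,x)$. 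The initial conditions are the standard ones: at $t=0$ the variation field coincides with $W$, giving $J[u](0,x)=I$; and the covariant $t$-derivative at $t=0$ is $\nabla_s(\nabla u(c(s)))|_{s=0}=\nabla^2 u(x)\cdot W$, giving $\dot J[u](0,x)=\nabla^2 u(x)$. Smoothness of $t\mapsto J[u](t,x)$ on $[0,1]$ follows from smoothness of $\exp_x$ along $t\nabla u(x)$, which is guaranteed by the absence of focal points between $x$ and $y=\gamma[u](1,x)$.

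For part (ii), I would reduce to Proposition \ref{good initial condition}. The non-focal hypothesis makes $d_{tV}\exp_x$ invertible for $t\in(0,1]$ (where $V=\exp_x^{-1}(y)$ is the initial velocity of the minimizing geodesic to $y$), which translates to invertibility of the Jacobi matrix $J^0_1(t)$ along $\gamma[u]$ for $t\in(0,1]$. Hence $S(t)=[J^0_1(t)]^{-1}J^1_0(t)$ from Proposition \ref{positivity of S(t)} is well-defined and symmetric. The key identification I need is the classical formula
\[
S(1)=\nabla^2\!\left(\tfrac12\rho_y^2\right)(x),
\]
obtained by expanding $\frac12\rho_y^2(c(s))$ in Jacobi-field coordinates along the minimizing geodesic from $x$ to $y$: a variation $c(s)$ with $c'(0)=W$ produces a family of minimizing geodesics to $y$ whose variation field is a Jacobi field $\mathcal{J}$ with $\mathcal{J}(0)=W$, $\mathcal{J}(1)=0$, and differentiating $\frac12\rho_y^2(c(s))$ twice yields exactly $\langle S(1)W,W\rangle$ (cf.\ pp.\ 413--414 of \cite{Villani}). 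With this in hand, the hypothesis $\nabla^2 u(x)+\nabla^2(\tfrac12\rho_y^2)(x)\geq 0$ becomes $\dot J[u](0,x)+S(1)\geq 0$, so Proposition \ref{good initial condition} yields $\det J[u](t,x)>0$ for $t\in[0,1)$; the endpoint value $\det J[u](1,x)\geq 0$ follows by continuity.

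The main obstacle is the identification $S(1)=\nabla^2(\tfrac12\rho_y^2)(x)$. Although this is a classical fact, setting up the Jacobi field boundary conditions at $y$ correctly and checking they match the ODE initial conditions defining $S(t)$ in Proposition \ref{positivity of S(t)} requires some care; once this is in place, the rest of the argument is a direct application of the ODE machinery already developed in the preceding section.
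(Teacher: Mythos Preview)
Your proposal is correct and aligns with the paper's approach: the paper does not give an independent proof but simply defers to Ch.~14 of \cite{Villani} (pp.~365--367 for (i), pp.~412--418 for (ii)), and your sketch is precisely the argument found there---recognize $F[u](t,\cdot)$ as a geodesic variation to obtain the Jacobi equation and initial data, then identify $S(1)=\nabla^2(\tfrac12\rho_y^2)(x)$ and invoke Proposition~\ref{good initial condition}. One minor remark: smoothness of $t\mapsto J[u](t,x)$ follows already from smoothness of the exponential map and does not require the non-focal hypothesis; that hypothesis is used only to ensure invertibility of $J^0_1(t)$ on $(0,1]$ (via the fact that a minimizing geodesic---guaranteed since $x,y$ are not cut points---has no interior conjugate points) so that $S(t)$ is defined.
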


\begin{proof}
The proof is contained in Ch.14 of \cite{Villani}. See the discussion on p.365-- p.367 for i) and p.412-- p418 on ii) 
\end{proof}

\begin{rem}
Note, this is a proposition regarding a Riemannian Manifold. No reference measure appear.
\end{rem}

\begin{rem}
Jacobi fields has been used in \cite{Cab} to give an explicit expression of $d F[u]$. Prop.\ref{geometric Jacobi} can be deduced from that calculation. However, we shall not need the expression of $d F[u]$.
\end{rem}

\medskip

Next, we shall estimate the Jacobi determinant of $F[u]$ with respect to reference measure $\nu$. Materials and their proofs can be found on Ch.14 of \cite{Villani}.

The Jacobi equation (\ref{Jacobi eq}) immediately suggests that behavior of $J[u](t)$ is controlled by curvatures and the Hessian of $u$. However, we are only interested in estimating the Jacobi determinant, which can indeed be controlled by Ricci. 

First, we make few definitions and notations.
\begin{defn}
\label{Jacobi determinant}
Let $(\Mfd, g, \nu)$ be a complete Riemannian Metric-Measure space and $\Omega \subset \M$ be a domain. Let $u \in C^2(\Omega)$. We denote
\[
\J[u] (t ; x ) := \det (J[u] (t, x)), \quad  x \in \Omega.
\]
and define
\[
\J_{\nu}[u] (t , x) :=  \lim_{r \rightarrow 0} \frac{ \nu [ F_t[u]  (B_{r} (x) ) ]   }{ \nu [B_{r} (x)]     }=\frac{e^{-V (F_t[u] (x) ) } }{e^{-V(x)}} \J[u] (t, x), \quad  x \in \Omega
\]
Also, we denote
\[
\mathcal{D}_{N} [u] (t,x)  = \begin{cases}
 (\J_{\nu} [u] (t,x) )^{1/N}    & n \leq N < \infty \\ 
\log \J_{\nu}[u] (t,x) &  N = \infty 
\end{cases}  .
\]
\end{defn}

The following propositions quoted from Ch.14 of \cite{Villani} are the keys of estimating $\J_{\nu} [u]$,

\begin{prop}
\label{det estimate1}
Let $u \in C^2(\Omega)$. Let $x \in   \Omega$ and $\gamma (t, x)$ is a geodesic starting at $x$ with $\dot{\gamma} (t ,x ) = \nabla u (x)$. Suppose $J[u] (t, x)$ is invertible for all $t \in  [0,1)$, then
\[
\ddot{\D}_{n}[u] (t,x )  \leq - \frac{\Ric (\dot{\gamma} (t,x))}{n} \D_n [u] (t,x), \forall t\in (0,1)
\]
\end{prop}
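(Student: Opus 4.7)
The plan is to reduce the statement to the standard matrix Riccati analysis along the geodesic $\gamma(t,x)$. Since $N=n$ forces $V=0$ in the paper's conventions, one has $\J_\nu[u] = \J[u]$ and $\D_n[u] = \J[u]^{1/n}$, so the inequality to establish is purely a Ricci comparison on the determinant of the Jacobi matrix $J[u](t,x)$ which, by Proposition~\ref{geometric Jacobi}, satisfies $\ddot J + R(t)J = 0$ with $J(0)=I$ and $\dot J(0) = \nabla^2 u(x)$.

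First I would introduce the logarithmic derivative $U(t) := \dot J(t)\, J(t)^{-1}$, well-defined on $[0,1)$ by the invertibility hypothesis, and observe that $U(0) = \nabla^2 u(x)$ is symmetric. Differentiating and using the Jacobi equation gives the matrix Riccati equation
\[
\dot U + U^2 + R(t) = 0.
\]
A standard Jacobi-field argument (cf.\ Ch.14 of \cite{Villani}) shows that once $U(0)$ is symmetric, $U(t)$ stays symmetric for all $t$ in the existence interval; this is the one nontrivial fact I would quote rather than reprove.

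Next, set $L(t) := \log \J[u](t,x)$. Jacobi's formula gives $\dot L = \tr U$, and the Riccati equation then yields
\[
\ddot L \;=\; -\tr R(t) \;-\; \tr(U^2) \;=\; -\Ric(\dot\gamma(t,x)) \;-\; \tr(U^2).
\]
Because $U$ is symmetric, Cauchy--Schwarz on its eigenvalues gives $\tr(U^2) \geq (\tr U)^2/n = \dot L^2/n$, and therefore
\[
\ddot L \;\leq\; -\Ric(\dot\gamma(t,x)) \;-\; \frac{\dot L^2}{n}.
\]

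Finally, I would substitute into $\D_n[u] = e^{L/n}$. A direct computation gives
\[
\ddot\D_n[u] \;=\; e^{L/n}\Bigl(\tfrac{1}{n}\ddot L + \tfrac{1}{n^2}\dot L^2\Bigr),
\]
and plugging in the bound on $\ddot L$ makes the $\dot L^2$ terms cancel exactly, leaving
\[
\ddot\D_n[u](t,x) \;\leq\; -\frac{\Ric(\dot\gamma(t,x))}{n}\, \D_n[u](t,x),
\]
which is the claim. The only real obstacle is the symmetry of $U(t)$; everything else is the routine Riccati/trace-Cauchy--Schwarz chain, and the cancellation at the last step is the reason the exponent $1/n$ (as opposed to any other power) is the correct one.
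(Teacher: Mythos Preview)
Your argument is correct and is precisely the Riccati/trace--Cauchy--Schwarz computation given on pp.~368--370 of \cite{Villani}, which is all the paper invokes as its proof of this proposition. There is no independent argument in the paper to compare against; you have simply written out what the citation points to.
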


\begin{proof}
See page 368-370 of \cite{Villani}.
\end{proof}

In the presence of reference measure, the techniques on page 380 \cite{Villani} extends the above proposition as follows 

\begin{cor}
\label{det estimate2}
Let $u \in C^2(\Omega)$. Let $x \in \Omega$ and $\gamma (t, x)$ is a geodesic starting at $x$ with $\dot{\gamma} (t ,x ) = \nabla u (x)$. Suppose $J[u] (t, x)$ is invertible for all $t \in  [0,1)$, then, for any $N \in [n ,\infty]$, 
\[
\ddot{\D}_N [u] (t, x) \leq \begin{cases}
- \frac{1}{N}\Ric_{N, \nu} (\dot{\gamma} (t,x))  \D_N  [u] (t,x) &  n\leq N < \infty \\
 -\Ric_{\infty, \nu} (\dot{\gamma } (t,x )) &  N = \infty
\end{cases}, \forall t \in (0,1)
\]
\end{cor}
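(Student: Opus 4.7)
I plan to deduce the corollary directly from Proposition \ref{det estimate1} by tracking how the weight $e^{-V}$ enters. Writing $\gamma(t) := \gamma[u](t,x)$ and $\phi(t) := V(\gamma(t))$, Definition \ref{Jacobi determinant} gives $\log \J_\nu[u](t,x) = V(x) - \phi(t) + \log \J[u](t,x)$, and since $\gamma$ is a geodesic, $\dot\phi = g(\nabla V, \dot\gamma)$ and $\ddot\phi = D^2 V(\dot\gamma, \dot\gamma)$. Rewriting Proposition \ref{det estimate1} in logarithmic form via the identity $(\log f)'' = f''/f - (f'/f)^2$ applied to $f = \D_n[u] = \J[u]^{1/n}$ produces
\[
(\log \J[u])''(t) + \frac{1}{n}\bigl((\log \J[u])'(t)\bigr)^2 \leq -\Ric(\dot\gamma(t)). \qquad (\star)
\]

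The case $N = \infty$ is then immediate. By Definition \ref{Jacobi determinant}, $\D_\infty = \log \J_\nu$, so $\ddot\D_\infty = -\ddot\phi + (\log \J[u])''$. Discarding the nonnegative square term on the left of $(\star)$ leaves $(\log \J[u])'' \leq -\Ric(\dot\gamma)$, and combining with $\ddot\phi = D^2 V(\dot\gamma,\dot\gamma)$ yields $\ddot\D_\infty \leq -\bigl[\Ric + D^2 V\bigr](\dot\gamma) = -\Ric_{\infty,\nu}(\dot\gamma)$, as required.

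For $N \in [n, \infty)$, the same strategy applies, but the nonlinearity $\D_N = \J_\nu^{1/N}$ forces me to retain the square term. Using $(\log \J_\nu)' = -\dot\phi + (\log \J[u])'$, the desired inequality is equivalent, after multiplying through by $N$ and substituting the definition of $\Ric_{N,\nu}$, to
\[
(\log \J[u])''(t) + \frac{1}{N}\bigl((\log \J[u])'(t) - \dot\phi(t)\bigr)^2 \leq -\Ric(\dot\gamma(t)) + \frac{\dot\phi(t)^2}{N - n}.
\]
Invoking $(\star)$ to eliminate $(\log \J[u])''$ reduces this to the purely algebraic inequality
\[
-\frac{a^2}{n} + \frac{(a - b)^2}{N} \leq \frac{b^2}{N - n},
\]
with $a := (\log \J[u])'(t)$ and $b := \dot\phi(t)$. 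Clearing denominators by the positive factor $nN(N-n)$ and collecting terms recast this as $\bigl[(N-n)a + nb\bigr]^2 \geq 0$, which is manifest. The borderline case $N = n$ is trivial, since Definition \ref{BE Ricci} then imposes $V \equiv 0$ and the corollary collapses to Proposition \ref{det estimate1}.

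The only substantive step is the completion of squares in the final display, which is precisely the reason the Bakry-\'Emery correction must carry the coefficient $1/(N-n)$ in front of $DV \otimes DV$. I expect the main bookkeeping obstacle to be sign- and factor-tracking when converting between the $\D_N$ and $\log \J_\nu$ formulations via $(\log f)'' = f''/f - (f'/f)^2$; no new geometric input beyond Proposition \ref{det estimate1} and the definitions already in place is required.
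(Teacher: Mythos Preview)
Your proposal is correct and is precisely the standard Bakry--\'Emery computation that the paper defers to Villani (Ch.~14, pp.~379--383): rewrite Proposition~\ref{det estimate1} in logarithmic form, separate the weight contribution $\phi(t)=V(\gamma(t))$, and close with the elementary completion-of-squares identity $-a^2/n + (a-b)^2/N \le b^2/(N-n)$. No additional ingredient is needed.
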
 
\begin{proof}
See p.379-383 in \cite{Villani}.
\end{proof}

\begin{rem}
Our definition of $\D_{\infty}$ differs a sign from the function $l(t)$ given in \cite{Villani}.
\end{rem}

\bigskip

\section{Measure Estimate}
In this section, we give the proof of Thm.\ref{Measure Estimate}. Indeed, all the technical work has been done in the previous two sections.

\begin{proof}[Proof of Thm.\ref{Measure Estimate}]
For convenience, write $A = A(a, E/B_{r}, u )$ in this proof. First, we show that the map $F[u](x) = F[ u/a]  (1,x)$ is a subjective map from $A$ onto $E$. 

Fix a point $y \in E$, by definition, there exists a paraboloid $P_{a, y}$ touches $u$ at some $x \in A $. By Lemma \ref{cut-focal free}, we have $P_{a, y}$ is smooth at $x$ and the contact condition implies 
\begin{equation}
\label{gradient identity}
\nabla u (x) = -a \rho_{y} (x ) \nabla \rho_{y} (x)
\end{equation}
Hence, 
\[
F[u/a] (1, x) = \exp_{x} [  -\rho_{y} \nabla \rho_y ( x)   ]  = y.
\]
This proves the subjectivity.

Next, as indicated by Lem.\ref{cut-focal free} $x$ and $y$ are neither cut-points nor conjugate points of each other, Prop.\ref{geometric Jacobi} along with the contact relation
\[
\nabla^2 \frac{u}{a} (x) \geq -\nabla^2 \frac{1}{2}\rho_y^2 (x)
 \]
imply that $J[u] (t,x)$ is invertible for all $t\in (0,1)$ and
\begin{equation}
\label{det J nonnegative}
\J_{\nu} (t, x) = \frac{e^{-V(F[u] (t,x))}}{e^{-V (x)}}\det J[u] (t,x ) \geq 0 \quad \forall t \in [0,1].
\end{equation}
Therefore, $\D_{N}$ (recall Defn.\ref{Jacobi determinant}) satisfies the differential inequality given in Cor.\ref{det estimate2}.

Denote $\gamma (t,x) = \exp_{x} (t \nabla u (x)/a )  $, by Eq.(\ref{gradient identity}) and the fact $\gamma$ is a geodesic, we have
\[
\abs{\dot{\gamma} (t,x )}^2 = \abs{\dot{\gamma} (0,x) }^2 = \rho_{y}^2 (x) \leq 4r^2, \quad \forall t \in [0,1],
\]
where the last inequality follows from the fact that $A  \subset B_{r}$.

Therefore, along with the Ricci lower lower bound condition, the differential inequality in Cor.\ref{det estimate2} reduce to 
\begin{equation}
\label{Ricci ode}
\ddot{\D}_N[u/a]  (t ,x) \leq \begin{cases}
   4 (K/N) r^2\; \D_{N}[\frac{u}{a}] (t ,x )   &   n\leq N  < \infty\\ 
  4K r^2 & N = \infty
\end{cases}. 
\end{equation}
Now apply a standard ODE comparison argument with the initial condition that 
\[
\D_N[u/a] (0,x ) = 
\begin{cases}
1  & n \leq N < \infty \\
 0 &  N = \infty
\end{cases}
\quad 
\dot{\D}_{N}[u/a] (0, x) =
\begin{cases}
 \frac{1}{Na} \Delta_{\nu}u (x)   & n \leq N < \infty \\
  \frac{1}{a} \Delta_{\nu} u(x) &  N = \infty
\end{cases},
\]
we obtain
\begin{equation}
\label{final det estimate}
\D_{N}[u/a] (1, x) \leq \D_{K,N, R} [u/a] (x), \quad \forall x \in A,  N \in [0, \infty]
\end{equation}

\smallskip

Finally, we shall apply the Area formula. Since $u$ is $C^2$ in $\Omega'$ containing $A$,  $F[u/a] (1, \cdot)$ is differentiable in $\Omega'$. Thus, by the Area formula, 
\[
\nu [E] \leq \int_{ A } \J_{\nu} (x)\; \nu(dx) \leq 
\int_{ A }  \biggl( \D_{N}[u/a] (1, x)  \biggr)^{N} \; \nu (dx) \quad n \leq N < \infty
\]
and
\[
\nu [E]  \leq \int_{ A }  \J_{\nu} (x) \;\nu(dx) \leq \int_{ A }  \exp \biggl(  \D_{\infty}[ u/a] (1, x)  \biggr) \; \nu (dx) , \quad N = \infty
\]
Here, we have used (\ref{det J nonnegative}) and the fact that $\J_{\nu}  \geq 0 $ (Eq.\ref{det J nonnegative}). The desired formula follows from (Eq.\ref{final det estimate}).
\end{proof}

We provide some remarks on this estimate. 
\begin{rem}
Note in the proof, the contact set is exactly where one can estimate Jacobi determinant of $F[u]$--the place where $\det (dF[u] )$ is nonnegative! This has already been observed by Cabr\'e. In \cite{Cab}, one focus on working with sub-level sets, contact sets are used only as an intermediate step. 
\end{rem}

\begin{rem}
There are only two inequalities used in above proof. One is during the application of area formula. It reaches equality if and only if $\F[u]$ is one-to-one. Another one is the estimate by the ODE. This differential inequality (Eq.\ref{Ricci ode}) is, indeed, equivalent to the Ricci lower bounded ( if it is satisfied by all suitable test functions, see details on p.400, Prop.14.8 in \cite{Villani}).
\end{rem}

\begin{rem}
The formula given by Thm.\ref{Measure Estimate} is, in certain sense, some dual formula to the Sobolev inequalities. In particular, the case $N = \infty$ could be viewed as a dual formula for the log-Sobolev inequality.

One way to recognize the duality is to consider the key ingredients in the proof of Thm.\ref{Measure Estimate} and the proof of Sobolev inequalities. It is known that Sobolev inequalities can be derived as consequence of Co-area formula. While Thm.\ref{Measure Estimate} is proved based on the Area formula.

Alternatively, one knows that Harnack inequalities are equivalent to Sobolev inequality on the manifolds satisfying doubling property (see,\cite{SC} and \cite{Kim}
), and we shall see in \S \ref{key growth lemma} -- \S \ref{soln}, Thm.\ref{Measure Estimate} along with the measure doubling property implies the Harnack inequality. Thus, there must be some relations between Thm.\ref{Measure Estimate} and Sobolev inequalities.
\end{rem}

\bigskip

\section{Ricci Comparison and A Barrier}

In this section, we recall the Ricci comparison theorem for the modified Laplacian and use it to construct a barrier function which shall be used latter (proof of Lem.\ref{decay}).

Recall the following result in \cite{Qian} (or \cite{LiXD}, \cite{WW}). 
\begin{prop}
\label{Ricci comparison}
Suppose $\Ric_{N, \nu} |_{B_{R}} \geq - Kg, K \geq 0, N \in [0, \infty)$, then for any two points $x, y \in B_{R}$. 
\[
\Delta_{\nu} \rho_{y} (x) \leq  (N-1) \frac{\tc (\omega_{K, N-1} \rho_y (x))}{ \rho_y (x) }
\]
in the support sense everywhere (recall notation \ref{tc, st} from \S\ref{notations}).
\end{prop}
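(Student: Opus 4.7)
The plan is to run the classical Bochner--Riccati radial comparison argument, modified by the standard ``effective dimension'' Cauchy--Schwarz trick, and conclude via ODE comparison with the model Riccati equation. Fix $y$ and, for $x \in B_R \setminus \mathrm{Cut}(y)$, work along the unit-speed minimizing geodesic $\gamma$ from $y$ to $x$. Write $h(r) = (\Delta_\nu \rho_y)(\gamma(r))$, $V'(r) = g(\nabla V, \dot\gamma)$, $V''(r) = \nabla^2 V(\dot\gamma,\dot\gamma)$. Applying the Bochner formula to $\rho_y$ along $\gamma$, together with $|\nabla \rho_y|\equiv 1$ (which forces $\nabla^2 \rho_y$ to be effectively $(n-1)$-dimensional) and the Cauchy--Schwarz bound $|\nabla^2 \rho_y|^2 \geq (\Delta \rho_y)^2/(n-1)$, I would first derive
\[
\frac{d}{dr}(\Delta \rho_y) \leq -\frac{(\Delta\rho_y)^2}{n-1} - \Ric(\dot\gamma,\dot\gamma).
\]

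The next step is the reduction to the effective dimension $N$. The weighted Cauchy--Schwarz inequality
\[
\frac{a^2}{p} + \frac{b^2}{q} \geq \frac{(a-b)^2}{p+q}, \qquad p,q>0,
\]
applied with $a = \Delta\rho_y$, $b = V'$, $p = n-1$, $q = N-n$, combined with the identity $\Ric(\dot\gamma,\dot\gamma) = \Ric_{N,\nu}(\dot\gamma,\dot\gamma) - V'' + (V')^2/(N-n)$ coming directly from the definition of $\Ric_{N,\nu}$, will cause the $V'$ and $V''$ terms to cancel once I substitute $h = \Delta\rho_y - V'$ and $h' = (\Delta\rho_y)' - V''$ into the Bochner inequality. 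The outcome is the clean scalar Riccati inequality
\[
h'(r) + \frac{h(r)^2}{N-1} \leq -\Ric_{N,\nu}(\dot\gamma,\dot\gamma) \leq K.
\]

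Finally, I would compare $h$ with the model solution $h_0(r) = (N-1)\lambda\coth(\lambda r)$, $\lambda = \sqrt{K/(N-1)}$, of the equality case $h_0' + h_0^2/(N-1) = K$. Both $h$ and $h_0$ blow up like $(N-1)/r$ at the origin---indeed $h \sim (n-1)/r \leq (N-1)/r \sim h_0$ as $r \to 0^+$---so a standard singular Riccati comparison on $(0,\rho_y(x)]$ yields $h(r) \leq h_0(r) = (N-1)\tc(\lambda \rho_y)/\rho_y$. Since $\tc(t) = t\coth(t)$ is increasing on $(0,\infty)$ and $\omega_{K,N-1} = 2\lambda$, this immediately upgrades to the stated bound $h(r) \leq (N-1)\tc(\omega_{K,N-1}\rho_y)/\rho_y$. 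For $x \in \mathrm{Cut}(y)$, where $\rho_y$ is only semiconcave, I would invoke Calabi's trick: perturb the base point to $y_\epsilon = \gamma(\epsilon)$ so that $x \notin \mathrm{Cut}(y_\epsilon)$, apply the smooth bound there, and let $\epsilon \to 0^+$ using smooth lower-touching barriers to recover the inequality in the support sense on all of $B_R$. The hardest part---though still standard---is the algebraic Cauchy--Schwarz reduction from the manifold dimension $n$ to the effective dimension $N$; the remainder is a radial ODE comparison plus the usual Calabi-type cut-locus treatment.
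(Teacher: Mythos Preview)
The paper does not actually prove this proposition; it merely cites Qian \cite{Qian} (and \cite{LiXD}, \cite{WW}) and moves on. Your sketch is the standard Bochner--Riccati argument with the effective-dimension Cauchy--Schwarz reduction, which is exactly what those references do, so there is nothing to compare against in the paper itself.

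Your outline is correct. Two small remarks worth making explicit in a final write-up. First, the weighted Cauchy--Schwarz step as written requires $N>n$; the boundary case $N=n$ forces $V\equiv\mathrm{const}$ by the paper's convention (Definition~\ref{BE Ricci}), so the argument collapses to the classical Laplacian comparison and should be stated separately. Second, you correctly spotted that the sharp Riccati comparison gives the bound with $\lambda=\sqrt{K/(N-1)}$, whereas the paper's stated inequality carries the doubled constant $\omega_{K,N-1}=2\lambda$; your observation that $\tc$ is increasing handles this discrepancy cleanly, and it is worth flagging that the paper's version is therefore not sharp (the factor of $2$ in the definition of $\omega$ is presumably there to absorb a diameter-vs-radius issue elsewhere in the paper, not because it is needed here).
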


\begin{rem}
Note, it is easy to check
\[
1+ (N-1) \tc(\omega_{K, N-1} \rho) \leq N \tc (\omega_{K, N} \rho)   
\]
Hence
\[
\Delta_{\nu} \frac{\rho^2_y}{2} (x) \leq N \tc (\omega_{K,N} \rho_y (x))
\]
\end{rem}

\medskip

The rest of this section is devoted to construct a barrier function. A similar construction has been made in \cite{Cab} (also adopted in \cite{Kim}). However, as working with potentially negative curvature,  one needs some more detailed information regarding such a barrier function to insure the constants depends on curvatures in a propert way (in particular, this is needed for considering elliptic fully-nonlinear PDEs). These information are obtained in the following lemmas. Their proofs are technical but completely routine.

Recall the definition of constant $\alpha$ from \S \ref{notations}. Note $\alpha \geq 2$.

\begin{lem}
\label{barrier 1}
There exists a function $h : [0, \infty) \rightarrow \R$ such that

i) $h \in C^2 [0,\infty) $ and $h ' (0) = 0$.

ii) $\inf_{[0,\infty)}h  \geq -\alpha^2 (18)^{\alpha}$

iii) The derivatives of $h$ satisfies the following estimates:

For $t > 1/18$, 
\[
h'' (t) - \frac{h'(t)}{t} = -\alpha (\alpha +2) t^{- (\alpha+2)}< 0, \quad, \frac{h'(t)}{t} = \alpha t^{ -(\alpha+2)}> 0
\]

For $0 \leq t \leq 1/18$, 
\[
\abs{ h'' (t) - \frac{h'(t)}{t} }  \leq 972\;  \alpha^2 (18)^{\alpha} \quad, 0 < \frac{h'(t)}{t} \leq 972 \;\alpha^2 (18)^{\alpha }, 
\]
\end{lem}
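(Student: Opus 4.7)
The plan is to construct $h$ explicitly by piecing together two closed-form expressions at the threshold $s:=1/18$. On the outer region $t \geq 1/18$, the \emph{equalities} in (iii) force $h'(t) = \alpha t^{-(\alpha+1)}$ (so $h''(t)-h'(t)/t$ and $h'(t)/t$ come out with the prescribed factors), and integrating once gives $h(t) = -t^{-\alpha}+c$ for some constant. I would simply take $h(t) := -t^{-\alpha}$ on $[s,\infty)$. On the inner region $[0,s]$, the conditions $h'(0)=0$ together with continuity of $h'(t)/t$ at the origin suggest looking for an even-polynomial ansatz $h(t) = At^4 + Bt^2 + C$; this automatically satisfies $h'(0)=0$, and the three unknowns $A,B,C$ match the three $C^2$-joining conditions at $t=s$ (values of $h$, $h'$, $h''$), ensuring $h \in C^2[0,\infty)$.

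First I would solve the resulting linear system. The matching conditions at $t = s$ read $4As^3 + 2Bs = \alpha s^{-(\alpha+1)}$, $\;12As^2 + 2B = -\alpha(\alpha+1)s^{-(\alpha+2)}$, and $\;As^4 + Bs^2 + C = -s^{-\alpha}$. Subtracting the first two (after dividing appropriately) gives $8As^2 = -\alpha(\alpha+2)s^{-(\alpha+2)}$, hence $A = -\alpha(\alpha+2)/(8s^{\alpha+4})$, $B = \alpha(\alpha+4)/(4s^{\alpha+2})$, and $C = -18^\alpha\bigl[\,1 + \alpha(\alpha+6)/8\,\bigr]$ after substitution. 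These are explicit closed forms in $\alpha$ and powers of $18$.

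Next I would verify the estimates in (iii) by direct computation. One gets $h'(t)/t = 4At^2 + 2B$ and $h''(t) - h'(t)/t = 8At^2$, both manifestly continuous on $[0,s]$. Since $A<0<B$, the quadratic $4At^2+2B$ is monotone decreasing in $t^2$, attains its maximum $2B=\alpha(\alpha+4)\cdot 18^{\alpha+2}/2$ at $t=0$ and its minimum $\alpha\cdot 18^{\alpha+2}>0$ at $t=s$; in particular $h'(t)/t>0$ on $[0,s]$, and its absolute value is at most $\alpha(\alpha+4)\cdot 18^{\alpha+2}/2$. Similarly $|8At^2| \leq 8|A|s^2 = \alpha(\alpha+2)\cdot 18^{\alpha+2}$. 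Using $\alpha \geq 2$ (which follows from the definition $\alpha = N\mathcal{H}(\omega R) \geq N \geq n \geq 2$ in \S\ref{notations}), both quantities are bounded by $3\alpha^2\cdot 18^{\alpha+2} = 972\,\alpha^2\cdot 18^\alpha$, exactly matching (iii). Finally, since $h' \geq 0$ on all of $[0,\infty)$, $h$ is nondecreasing and $\inf h = h(0) = C$, so (ii) reduces to the elementary inequality $1 + \alpha(\alpha+6)/8 \leq \alpha^2$, which is immediate for $\alpha \geq 2$.

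The construction itself is not difficult; the only real obstacle is bookkeeping, namely arranging all constants so that the precise numerical factor $972 = 3\cdot 18^2$ in (iii) and the prefactor $\alpha^2$ in (ii) drop out. This is what dictates both the choice of an even quartic ansatz (rather than, say, a cubic, which would fail $h'(0)=0$) and the specific threshold $s=1/18$: a different threshold would rescale the constants and require revisiting later estimates where this barrier is used (notably in the proof of Lem.\ref{decay}).
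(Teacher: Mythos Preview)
Your construction is correct and complete: the even quartic $At^4+Bt^2+C$ glued to $-t^{-\alpha}$ at $s=1/18$ meets all three $C^2$ matching conditions, and your verification of (ii)--(iii) using $\alpha\geq 2$ is clean. The paper takes a slightly different ansatz on the inner piece: it uses $\beta_0+\beta_1 t^2+\beta_2 t^3$ (a cubic with no linear term) and the outer function $9^{\alpha}-t^{-\alpha}$, then matches value and two derivatives at the threshold. So both proofs follow the same two-piece strategy, but with different inner polynomials; your even quartic gives $h''(t)-h'(t)/t=8At^2$ and $h'(t)/t=4At^2+2B$, while the paper's choice gives $h''(t)-h'(t)/t=3\beta_2 t$ and $h'(t)/t=2\beta_1+3\beta_2 t$. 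Either leads to the same bound $972\,\alpha^2\cdot 18^{\alpha}$ after elementary estimates.

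One small correction to your closing remark: a cubic does \emph{not} fail $h'(0)=0$. The paper's polynomial $\beta_0+\beta_1 t^2+\beta_2 t^3$ is cubic in $t$, has $h'(0)=0$ (no linear term), and still has exactly three free coefficients to match the three $C^2$ conditions. What is true is that among \emph{even} polynomials the minimal degree giving three parameters is four; your quartic is the natural choice under that symmetry restriction, not the only workable choice overall.
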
 

\begin{proof}
Let $\beta_i, i= 0,1,2$ be constants to be determined.  Consider the function
\[
h (t) : =\begin{cases}
\beta_0  + \beta_1 t^2 + \beta_2 t^3   &   t \leq \frac{1}{18}  \\
 (1/9)^{-\alpha} - t^{-\alpha}  &   t > \frac{1}{18}
\end{cases}, \quad 
\]

By choosing 
\begin{equation}
\label{taylor constants}
\beta_0 = -\frac{1}{6} \alpha(5 + \alpha) (18)^{\alpha}, \; \beta_1 = \frac{(18)^2}{2}\alpha (3+\alpha) (18)^{\alpha}, \; \beta_2= -\frac{(18)^3}{3} \alpha (2 + \alpha) (18)^{\alpha}
\end{equation}
we match up the values and the first two derivatives of $h$ at  $t = 1/9$. It follows then $h$ satisfies i).

\smallskip

Next, we estimate the derivatives of $h$. The case for $t > 1/9$ is clear. Consider $t \in [0, 1/9]$
 
Since $\alpha \geq 0$,
\[
\beta_0, \;\beta_1 \geq 0 \quad \beta_2 < 0.
\] 
and,
\[
  - \frac{\beta_1}{\beta_2}  = \frac{1}{12} \frac{ (3 +\alpha)}{(2 + \alpha)} > \frac{1}{12}.
\]

Therefore, we see, 
\[
\frac{h'(t)}{t} =2 \beta_1 + 3 \beta_2 t
 \]
is strictly positive for $ t \in  [0, 1/18]$ and monotone decreasing; and
\[
h'' (t) -\frac{h'(t)}{t} = 3 \beta_2 t 
\]
monotone decreasing and negative. The desired estimates follows from the expression of $\beta_i$'s and the fact $\alpha \geq 2$.

ii) follows from $h'(t) \geq 0$ in $0 \leq t \leq 1/18$ and the fact $\alpha \geq  2$.
\end{proof}

\smallskip

Recall the definition of $\omega_{K,N}$ and $\alpha$ from \S \ref{notations}. In the statement and the proof of next lemma, we shall denote $\omega_{K,N}$ by $\omega$.

\begin{lem}
\label{barrier 2}
Let $(\Mfd, g, \nu)$ be a complete Riemannian Metric-Measure space. Given geodesic ball $B_{r} (x_0)$ 
with $r \leq R$
. Let $K \geq 0, 1 < N <\infty$. Suppose
\[
\Ric_{N, \nu} |_{B_{r} (x_0)} \geq - K g
\]

Then there exists a function $\psi$ such that:

i) $\psi $ is continuous in $B_{r} (x_0)$ and lies in $C^2 ( B_{r} (x_0) \setminus Cut (x_0) ) $. 

ii) $\inf_{B_{r} (x_0))} \psi \geq -\alpha^2 (18)^{\alpha }$ and 
\[
\psi \geq (18)^{\alpha} - (4/3)^{\alpha} \text{ in } B_{r}(x_0)\setminus B_{3r/4} (x_0), \quad \psi = (18)^{\alpha} - 2^{\alpha} \text{ on } \partial B_{r/2} (x_0)
\]

iii) $\psi $ is locally bounded above in support sense in $B_{r} (x_0)$

iv) In $\overline{B}_{r/18} (x_0) \setminus Cut (x_0)$,
\[
\frac{ r^2 \Delta_{\nu} [\psi]}{N} +  \tc(\omega r)  \leq 972 \alpha^3  4^{\alpha} 
\]

v) In $B_{r} (x_0) \setminus ( \overline{B}_{r/18} (x_0) \cup Cut (x_0) ) $
\[
\frac{ r^2 \Delta_{\nu} [\psi] }{N}  +   \tc(\omega r)\leq 0
\]
\end{lem}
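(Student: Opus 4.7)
The plan is to take $\psi(x) := h\bigl(\rho(x, x_0)/r\bigr)$, where $h$ is the auxiliary function built in Lemma \ref{barrier 1}, and verify each item by pairing the pointwise properties of $h$ with the comparison theorems applied to $\rho = \rho(\cdot, x_0)$. Items (i)--(iii) are essentially bookkeeping: off $Cut(x_0)$ the distance $\rho$ is smooth, so $\psi \in C^2$, while continuity on all of $B_r(x_0)$ is inherited from $h \in C^2([0,\infty))$. The explicit values in (ii) come from plugging $\rho = r/2$ and $\rho = 3r/4$ into the outer piece of $h$, using monotonicity of $h$ on $[1/18,\infty)$ (from $h'/t \geq 0$) to extend the lower bound across $B_r \setminus B_{3r/4}$; the infimum bound is the one already given by Lemma \ref{barrier 1}(ii). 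Property (iii) holds because $h \in C^2$ has bounded derivatives on $[0,1]$ and $\rho^2$ is bounded above in support sense by Prop \ref{Hessian bound of distance function}.

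For the Laplacian estimates, set $s = \rho/r$. Away from $Cut(x_0)$, the chain rule applied to the modified Laplacian gives
\[
r^2 \Delta_\nu \psi \;=\; h''(s) + r\,h'(s)\,\Delta_\nu \rho \;=\; \bigl[h''(s) - h'(s)/s\bigr] + \frac{h'(s)}{s}\bigl[\,1 + \rho\,\Delta_\nu \rho\,\bigr].
\]
The second grouping is crucial: since $h'(0) = 0$, the quotient $h'(s)/s$ extends continuously to $s=0$ (with value $h''(0)$), and this exactly cancels the $1/\rho$ singularity of $\Delta_\nu \rho$ at $x_0$. Because $h'/t \geq 0$, I can apply Prop \ref{Ricci comparison} in the form $\rho\,\Delta_\nu \rho \leq (N-1)\tc(\omega_{K,N-1}\rho)$ and then the remark after it, $1 + (N-1)\tc(\omega_{K,N-1}\rho) \leq N\tc(\omega \rho) \leq N\tc(\omega r) \leq \alpha$, to obtain
\[
r^2 \Delta_\nu \psi \;\leq\; \bigl[h''(s) - h'(s)/s\bigr] + \alpha\,\frac{h'(s)}{s}.
\]

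Cases (iv) and (v) are now algebra. For (v), with $s > 1/18$, Lemma \ref{barrier 1}(iii) gives the exact identities $h''(s) - h'(s)/s = -\alpha(\alpha+2)s^{-(\alpha+2)}$ and $h'(s)/s = \alpha s^{-(\alpha+2)}$, so the right-hand side collapses to $-2\alpha s^{-(\alpha+2)} \leq -2\alpha \leq -N\tc(\omega r)$ (using $s \leq 1$, $\alpha \geq 2$, and $N\tc(\omega r) \leq \alpha$); rearranging yields (v). For (iv), with $s \leq 1/18$, the uniform bounds $|h''(s) - h'(s)/s|,\; h'(s)/s \leq 972\alpha^2 (18)^\alpha$ give $r^2 \Delta_\nu \psi \leq 972\alpha^2 (18)^\alpha (1 + \alpha)$; dividing by $N$ and adding $\tc(\omega r) \leq \alpha/N$ produces the asserted explicit constant bound.

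The main obstacle is handling the center $x_0$, where $\nabla \rho$ is undefined and $\Delta_\nu \rho$ blows up like $(N-1)/\rho$. The function $h$ in Lemma \ref{barrier 1} was engineered precisely for this: the condition $h'(0) = 0$, together with $h'(s) = O(s)$ as $s \to 0$, makes $h'(s)\,\Delta_\nu \rho$ extend continuously through $x_0$, which is exactly what legitimizes the inequality $r^2\Delta_\nu\psi \leq [h''(s) - h'(s)/s] + \alpha(h'(s)/s)$ everywhere on $B_r \setminus Cut(x_0)$. Once this cancellation is organized as above, the estimates in (iv) and (v) reduce to direct substitution of the two inequalities supplied by Lemma \ref{barrier 1}(iii).
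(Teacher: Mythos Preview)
Your proposal is correct and follows essentially the same approach as the paper: define $\psi = h(\rho_{x_0}/r)$ with $h$ from Lemma~\ref{barrier 1}, compute $r^2\Delta_\nu\psi = (h'' - h'/s) + (h'/s)\,\Delta_\nu(\rho^2/2)$ (your grouping via $1 + \rho\Delta_\nu\rho$ is the same identity), apply the Ricci comparison bound $\Delta_\nu(\rho^2/2) \le N\tc(\omega r) \le \alpha$ using $h'/s \ge 0$, and then split into the two regimes of Lemma~\ref{barrier 1}(iii). Your algebra in case (v), collapsing to $-2\alpha s^{-(\alpha+2)} \le -N\tc(\omega r)$, is a slightly cleaner variant of the paper's manipulation; your extra care about the cancellation of the $1/\rho$ singularity at $x_0$ is a point the paper leaves implicit.
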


\begin{proof}
Let $h (t)$ be the function given in Lem.\ref{barrier 1} with $\alpha $ given in Eq.\ref{C alpha}. Denote $\rho_{x_0} $ by $\rho$ for convenience. Define
\[
\psi := h (\rho/ r).
\]
We shall show $\psi$ satisfies all desired properties.

Fix a poin $x \in B_{r}(x_0)  \setminus  Cut (x_0)  $, all the expressions below are evaluated at this $x$. By direct calculation, we have
\begin{equation}
\label{Hessian of barrier}
r^2\nabla^2 \psi = \biggl(  h'' -\frac{h'}{t}  \biggr) \nabla \rho \otimes \nabla \rho +  \frac{h'}{t} \nabla^2 \frac{\rho^2}{2}, \quad t  =\rho/r
\end{equation}
iii) follows from above equation and the standard argument of Calabi (p.282 \cite{Peter}).

Follows immediately from Eq.\ref{Hessian of barrier} and $\tr (\nabla \rho \otimes \nabla \rho) = \abs{\nabla \rho}^2= 1$, we have
\begin{equation}
\label{Lap on barrier}
\begin{split}
r^2 \Delta_{\nu} [\psi]  &  =  \biggl(   h'' - \frac{h'}{t} \biggr)  + \frac{h'}{t} \Delta_{\nu} \frac{\rho^2}{2}
\end{split}
\end{equation}

\smallskip

If $x \in B_{r}(x_0) \setminus ( B_{r/9} (x_0 ) \cup Cut (x_0 ))$, then by iii) of Lem.\ref{barrier 1}, the inequality (\ref{Lap on barrier}), and the Ricci comparison \ref{Ricci comparison}, 
\[
r^2 \Delta_{\nu} [\psi]   \leq \alpha \biggl( \frac{\rho}{r} \biggr)^{- (\alpha +2)} \biggl( -\alpha  - 2+ N\tc(\omega r)  \biggl) 
\]
Since $\alpha =N\tc(\omega R) \geq N \tc (\omega r) > 1$, 
\[
r^2  \Delta_{\nu} [\psi]   \leq \alpha \biggl( -\alpha  - 2+ N\tc(\omega r)  \biggl) 
\]

Thus
\[
\begin{split}
\frac{r^2  \Delta_{ \nu}  [\psi]}{N} +  \tc(\omega r)  \leq \frac{\alpha}{N} ( -\alpha - 2  + N \tc(\omega r) + \frac{N\tc(\omega r)}{\alpha})
\end{split}
\]

v) follows from the very choice that $\alpha =N\tc(\omega R) \geq  N \tc (\omega r ) $ (Eq.\ref{C alpha}), 

\smallskip

If $x \in B_{r/18} (x_0) \setminus Cut (x_0)$, then by the same calculation this time with the other part in iii) of Lem.\ref{barrier 1}, we arrive (Note $N > 1$)
\[\begin{split}
\frac{r^2  \Delta_{\nu}  [\psi]}{N} & +  \tc(\omega r) \leq  \frac{972}{N}\alpha^2 9^{\alpha} + (972 \alpha^2 (18)^{\alpha} + 1 ) \tc(\omega r)
\end{split}\]
iv) follows immediately from the choice of $\alpha $.
\end{proof}

\bigskip

\section{Measure Doubling and Monotonicity of $\I_{K,N}$}
\label{integral}
In this section, we summarize some results regarding the measure doubling property, the integral $\I_{K,N}$ and some basic $L^p$ theory that will be used in the proof of Harnack inequalities (Thm.\ref{Harnack sup finite}--Thm.\ref{Harnack soln finite}).

The following proposition estimates the doubling constant on a Riemannian Metric-Measure space in terms of Ricci lower bound (Cor.18.11, \cite{Villani}). 

\begin{prop}
\label{doubling finite}
Let $(\Mfd, g, \nu)$ be a Riemannian metric-measure space, satisfying the curvature condition $\Ric_{N, \nu}\geq -K$ for some $K \geq 0$ and $1<N< \infty$. Denote $\db_{\Omega}$ the doubling constant in the domain $\Omega$. Then $\nu$ is doubling with a constant $\db$:
\begin{itemize}
\item $\db_{\Mfd} \leq 2^N$ if $K=0$;
\item $\db_{B_{R} } \leq 2^N \Big[\cosh\Big(2\sqrt{\frac{K}{N-1}}R\Big)\Big]^{N-1} \leq \db_{K,N,R}$ for any $B_{R}$ if $K>0$
\end{itemize}
(recall Eq.\ref{C omega, db} in \S \ref{notations}).
\end{prop}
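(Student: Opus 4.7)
The plan is to derive this from the generalized Bishop–Gromov volume comparison for Riemannian metric–measure spaces with a Bakry–Émery Ricci lower bound, which is essentially Theorem~18.8 / Corollary~18.11 of \cite{Villani} (also in the work of Qian and Wei–Wylie). The key comparison says that under $\Ric_{N,\nu} \geq -Kg$ with $1 < N < \infty$, for every $x \in \Mfd$ the ratio
\[
r \;\longmapsto\; \frac{\nu(B_r(x))}{v_{K,N}(r)}, \qquad v_{K,N}(r) := \int_0^r \sinh(\sigma t)^{N-1}\, dt, \quad \sigma := \sqrt{K/(N-1)},
\]
is non-increasing, where $v_{K,N}(r)$ is (a constant multiple of) the volume of a metric ball of radius $r$ in the $N$-dimensional model space of constant curvature $-K/(N-1)$.

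From monotonicity applied to the pair of radii $r \leq 2r$ one immediately obtains
\[
\frac{\nu(B_{2r}(x))}{\nu(B_r(x))} \;\leq\; \frac{v_{K,N}(2r)}{v_{K,N}(r)}.
\]
The model–space ratio is then estimated by a direct calculation: substituting $t = 2s$ in the numerator and using the duplication identity $\sinh(2\sigma s) = 2\sinh(\sigma s)\cosh(\sigma s)$ yields
\[
\int_0^{2r} \sinh(\sigma t)^{N-1} dt \;=\; 2^N \int_0^r \sinh(\sigma s)^{N-1} \cosh(\sigma s)^{N-1}\, ds \;\leq\; 2^N \cosh(\sigma r)^{N-1} \int_0^r \sinh(\sigma s)^{N-1} ds.
\]
Thus $v_{K,N}(2r)/v_{K,N}(r) \leq 2^N \cosh(\sigma r)^{N-1}$, and restricting to $r \leq R$ gives the first two bullets of the proposition (with the slightly looser $\cosh(2\sigma R)^{N-1}$ that the author states in the $K>0$ case). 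The case $K = 0$ is the limit $\sigma \to 0$, reducing to the Euclidean model where $\cosh \equiv 1$ and $v_{0,N}(r) = r^N/N$, so $v_{0,N}(2r)/v_{0,N}(r) = 2^N$.

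Finally, the assertion $2^N [\cosh(2\sqrt{K/(N-1)}R)]^{N-1} \leq \db_{K,N,R} = 2^N e^{4R\sqrt{NK}}$ follows from the elementary estimate $\cosh(t) \leq e^t$: indeed
\[
[\cosh(2\sqrt{K/(N-1)}R)]^{N-1} \leq e^{2(N-1)\sqrt{K/(N-1)}R} = e^{2R\sqrt{K(N-1)}} \leq e^{4R\sqrt{NK}},
\]
so the notational constant $\db_{K,N,R}$ introduced in \S\ref{notations} is a legitimate (coarser) upper bound. The only step that requires real content is invoking the generalized Bishop–Gromov theorem; once cited, everything else is an elementary computation with $\sinh$ and $\cosh$, so there is no genuine obstacle — the work is bookkeeping to match the stated form.
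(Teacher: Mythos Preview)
Your proposal is correct and matches the paper's approach: the paper does not give its own proof of this proposition but simply cites Cor.~18.11 of \cite{Villani}, which is exactly the generalized Bishop--Gromov comparison you invoke. Your additional computation with the duplication identity for $\sinh$ and the bound $\cosh t \le e^t$ cleanly verifies the explicit constants stated, including the final coarsening to $\db_{K,N,R}$.
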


The doubling property allows the following simple estimate. Recall $\eta=\eta_{K,N,R}$ and $\db = \db_{K,N,R}$ (Eq.\ref{C omega, db}. Eq.\ref{C eta}) from \S \ref{notations}
\begin{equation}
\label{doubling estimate}
\frac{\nu [B_{r_1}(x)]}{\nu [B_{r_2}(x)]} \leq  \db\biggl( \frac{r_1}{r_2} \biggr)^{N\eta}
\end{equation}
provide $B_{r_2}(x) \subset B_{r_1}(x) \subset B_{R}$ on a $(\Mfd, g, \nu)$ with $\Ric_{N,\nu} |_{B_{R}} \geq - K g$.

\smallskip 

Recall the definition of $\I_{K,N}$ (Eq.\ref{KN integral}) from \S \ref{notations}. The integral $\I_{K,N} $ has good monotonicity and fits the scaling well.
\begin{lem}
\label{monotone integral}
For any $1 < N < \infty$, we have

i) $\I_{N} (f ; B_{r}, t) \leq \I_{N } (f; B_r, s) $ whenever $t < s$.

ii) If $B_{r_1}(x) \subset B_{r_2} (x) \subset B_{R} $, then 
\[
\I_{N} (f ; B_{r_1}(x), \eta_{K,N,R}) \leq \I_{N} (f ; B_{r_2}(x), \eta_{K,N,R}).
\]
(recall the definition of $\eta_{K,N,R}$ (Eq.\ref{C eta}) from \S \ref{notations}).
\end{lem}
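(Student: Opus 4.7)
The statement decomposes into two independent monotonicity claims, both of which I plan to derive as essentially ``Jensen plus doubling'' arguments about averaged $L^{Nq}$-norms.

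\textbf{Part i).} The first claim is the classical monotonicity of averaged $L^p$-norms on a probability space. Viewing $(B_r, \nu/\nu[B_r])$ as a probability measure and using that $x \mapsto x^{s/t}$ is convex on $[0,\infty)$ for $s \geq t$, Jensen's inequality yields
$$\left(\fint_{B_r}|f|^{Nt}\,\nu(dx)\right)^{s/t} \leq \fint_{B_r}|f|^{Ns}\,\nu(dx).$$
Taking the $1/(Ns)$-th root and multiplying by $r^2$ gives Part i) immediately.

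\textbf{Part ii).} Here the interaction with the underlying curvature condition enters through the doubling estimate (Eq.\ref{doubling estimate}), which is the only nontrivial input. Since $B_{r_1}(x)\subset B_{r_2}(x)$, I first bound integrals trivially,
$$\int_{B_{r_1}(x)}|f|^{N\eta}\,\nu(dx) \leq \int_{B_{r_2}(x)}|f|^{N\eta}\,\nu(dx),$$
and then convert to averages to get
$$\left(\fint_{B_{r_1}(x)}|f|^{N\eta}\right)^{1/(N\eta)} \leq \left(\frac{\nu[B_{r_2}(x)]}{\nu[B_{r_1}(x)]}\right)^{1/(N\eta)}\left(\fint_{B_{r_2}(x)}|f|^{N\eta}\right)^{1/(N\eta)}.$$
The exponent $\eta=\eta_{K,N,R}$ is defined so that $\log_2 \db_{K,N,R}=N\eta$, and this normalization is exactly what makes the doubling estimate produce $(\nu[B_{r_2}]/\nu[B_{r_1}])^{1/(N\eta)}\leq r_2/r_1$ once one uses the sharp iterated form of doubling. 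Multiplying by $r_1^2$ and regrouping, the factor $r_2/r_1$ combines with one power of $r_1$ to give
$$\I_N(f;B_{r_1}(x),\eta) \leq \frac{r_1}{r_2}\,\I_N(f;B_{r_2}(x),\eta) \leq \I_N(f;B_{r_2}(x),\eta),$$
since $r_1\leq r_2$.

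\textbf{Main obstacle.} Part i) is a one-line Jensen argument. The only subtle point lies in Part ii): one must verify that the specific definitions of $\db_{K,N,R}$ and $\eta_{K,N,R}$ in \S\ref{notations} are compatible in the sense that the $r^2$ prefactor in $\I_N$ exactly absorbs the volume ratio produced by doubling, so that the net bound ends up proportional to $r_1/r_2\leq 1$ rather than losing a constant from a slack application of Eq.\ref{doubling estimate}. This is a bookkeeping check aligned with Bishop--Gromov and the chosen normalization, but it really requires the sharp iterated form $\nu[B_{r_2}]/\nu[B_{r_1}]\leq (r_2/r_1)^{N\eta}$ rather than the version with a free-standing $\db$ in front.
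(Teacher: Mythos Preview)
Your approach is precisely the paper's: the paper's proof is the two-line remark ``i) follows from the standard $L^p$ theory; ii) follows from direct calculation and Eq.\ref{doubling estimate},'' and you have simply spelled this out (Jensen for part i), monotonicity of the integral plus the volume ratio bound for part ii)).

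Your flagged obstacle is legitimate and is something the paper's terse proof does not address. As written, Eq.\ref{doubling estimate} carries the prefactor $\db$, and since $\db^{1/(N\eta)}=2$ this would only yield $\I_N(f;B_{r_1},\eta)\le 2(r_1/r_2)\,\I_N(f;B_{r_2},\eta)$, which fails for $r_2/2<r_1<r_2$. To get the clean statement one needs the sharp Bishop--Gromov form $\nu[B_{r_2}]/\nu[B_{r_1}]\le (r_2/r_1)^{N\eta}$ (no prefactor), which does hold under $\Ric_{N,\nu}\ge -Kg$ on $B_R$ with $\eta=\eta_{K,N,R}$ by the genuine volume comparison rather than dyadic iteration of doubling. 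So your diagnosis is correct: the argument works, but it rests on the sharper volume ratio, not literally on the displayed Eq.\ref{doubling estimate}.
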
 

\begin{proof}
i) follows from the standard $L^p$ theory; ii) follows from direct calculation and Eq.\ref{doubling estimate}.
\end{proof}

\medskip

Given a function $f$ on a domain $\Omega$ with finite-measure, denote
\begin{equation}
\label{distribution}
\tilde{\lambda}_{\Omega} (t) := \frac{\nu [\{ f \leq t \} \cap \Omega]}{\nu [\Omega]}.\end{equation}
When no confusion arise, we shall omit the subscript. 

We shall need the following well-known statement in $L^p$-theory (see \cite{CC} for instance). 
\begin{lem}
\label{Lp}
Let $C> 1$. Then, for any $0 < p < \infty$, 
\[
 \fint_{\Omega} f^{p} < \infty  \Leftrightarrow  S:= \sum_{k = 0}^{\infty} C^{pk} \tilde{\lambda} (C^{k}) < \infty
\]
and 
\[
(1- \frac{1}{C^{p}}) S+ \frac{1}{C^{p}}\tilde{\lambda} (1) \leq\fint_{\Omega} f^{p} \leq 1+  (C^{p} - 1)S
\]
\end{lem}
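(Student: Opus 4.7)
\medskip

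\noindent\textbf{Proof proposal for Lemma \ref{Lp}.}

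The plan is to use the layer-cake representation of $\int f^p$ in terms of the distribution function and then compare the integral to the geometric series $S$ by chopping the positive real axis at the points $\{C^k\}_{k\in\Z}$. First, I read $\tilde\lambda(t)$ as the tail distribution $\nu[\{f>t\}\cap\Omega]/\nu[\Omega]$ (as written, with $\{f\leq t\}$, the sum $S$ would contain the constant-$1$ tail and diverge; compare Proposition 7.2 of \cite{CC}). Under this reading, $\tilde\lambda$ is nonincreasing, right-continuous, and takes values in $[0,1]$, and the usual identity
\[
\fint_\Omega f^p \, d\nu \;=\; p\int_0^\infty t^{p-1}\tilde\lambda(t)\,dt
\]
holds (for nonnegative $f$; in general replace $f$ by $|f|$).

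Next, I split the integral as
\[
\int_0^\infty t^{p-1}\tilde\lambda(t)\,dt \;=\; \int_0^1 t^{p-1}\tilde\lambda(t)\,dt \;+\; \sum_{k=0}^\infty \int_{C^k}^{C^{k+1}} t^{p-1}\tilde\lambda(t)\,dt.
\]
On each dyadic-type interval $[C^k,C^{k+1}]$, monotonicity of $\tilde\lambda$ yields the pointwise bounds $\tilde\lambda(C^{k+1})\leq \tilde\lambda(t)\leq \tilde\lambda(C^k)$, and a direct computation gives $\int_{C^k}^{C^{k+1}} t^{p-1}\,dt = C^{pk}(C^p-1)/p$. Similarly, on $[0,1]$ we have $\tilde\lambda(1)\leq \tilde\lambda(t)\leq 1$. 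Plugging these bounds in and multiplying by $p$ produces
\[
\tilde\lambda(1) + \frac{C^p-1}{C^p}\sum_{k=1}^\infty C^{pk}\tilde\lambda(C^k) \;\leq\; \fint_\Omega f^p \;\leq\; 1 + (C^p-1)\sum_{k=0}^\infty C^{pk}\tilde\lambda(C^k).
\]
The upper bound is exactly $1+(C^p-1)S$. For the lower bound, write $\sum_{k=1}^\infty C^{pk}\tilde\lambda(C^k)=S-\tilde\lambda(1)$ and simplify:
\[
\tilde\lambda(1) + (1-C^{-p})(S-\tilde\lambda(1)) \;=\; (1-C^{-p})S + C^{-p}\tilde\lambda(1),
\]
which matches the stated inequality. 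The equivalence $\fint_\Omega f^p<\infty \Leftrightarrow S<\infty$ is then immediate from the two-sided estimate, since the coefficient $1-C^{-p}$ in front of $S$ is strictly positive for $C>1$ and $p>0$.

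There is no real obstacle in this argument; the only subtlety is the (apparent) typo in the definition of $\tilde\lambda$, and organizing the endpoint contribution $\tilde\lambda(1)$ so that it combines cleanly with $(1-C^{-p})(S-\tilde\lambda(1))$ to produce the stated constants. If one prefers to avoid invoking the layer-cake formula, an equivalent discrete version works: decompose $\Omega$ into the level sets $\{C^k<f\leq C^{k+1}\}$ and $\{f\leq 1\}$, bound $f^p$ pointwise on each piece by $C^{p(k+1)}$ (above) and $C^{pk}$ (below), and sum; this reproduces the same two-sided estimate up to the same constants.
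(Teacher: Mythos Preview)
Your proof is correct and your diagnosis of the typo in the definition of $\tilde\lambda$ is spot on: the paper's usage (see Lemma~\ref{decay}, where $\tilde\lambda(M^k)\le(1-\mu)^k$ is proved via $\nu[B_{R/2}\setminus D_k]$) confirms that $\tilde\lambda(t)$ must be the tail distribution $\nu[\{f>t\}\cap\Omega]/\nu[\Omega]$, not the cumulative one as literally written. The paper itself gives no proof of Lemma~\ref{Lp}; it simply cites it as a well-known $L^p$ fact (referring to \cite{CC}), so there is nothing to compare against, and your layer-cake argument with the dyadic-type decomposition at the points $C^k$ is exactly the standard route.
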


\bigskip

\section{Proof of Harnack Inequalities I}
\label{key growth lemma}
In this section, we establish the key lemma in proving Harnack inequalities (Thm.\ref{Harnack sup finite}--Thm.\ref{Harnack soln finite}). It describes the local growth of the unknown $u$. The similar lemma is used in \cite{Cab}. Our proof is essentially same as that in \cite{Cab} (also in \cite{Kim}). However, by making use some fine properties of the contact sets, we avoid the approximation procedures needed in \cite{Cab} and \cite{Kim}.

Recall from \S \ref{notations}, the constant $M, \mu$ (Eq.\ref{C mu}) and integral $\I_{K,N}$ (Eq.\ref{KN integral}).

\begin{lem}
\label{local growth}
Let $(\Mfd, g,\nu)$ be a complete metric-measure space. Let $u \in C (\overline{B}_{2R} ) \cap C^2 (B_{2R} )$ and $f \in C (B_{2R} )$. Let $K \geq 0 , N < \infty$. 

Suppose  
\[
\Ric_{N, \nu} |_{B_{2R} }\geq - K g, \quad \I_{K, N} (f, B_{2R}, 1) \leq \delta_0
\]

Then, for any given ball $B_{2r}(x_0) \subset B_{2R}$, 
\[
u \geq 0 \text{ in } B_{r} (x_0), \quad \; \inf_{B_{r/2}} u \leq 1 , \quad \Delta_{\nu} [u] \leq f  \text{ in } B_{r} (x_0),
\]
implies 
\begin{equation}
 \frac{\nu[\{u\leq M \}\bigcap B_{r/18}(x_0) ]}{\nu[B_{r}(x_0)]}\geq \mu
\end{equation}
\end{lem}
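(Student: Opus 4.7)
The plan is to apply the Measure Estimate (Thm.\ref{Measure Estimate}) to the function $w:=u+\psi$, where $\psi$ is the barrier of Lem.\ref{barrier 2} attached to the ball $B_r(x_0)$. The barrier is engineered so that $\psi\geq (18)^{\alpha}-(4/3)^{\alpha}$ on $B_r(x_0)\setminus B_{3r/4}(x_0)$ while $\inf_{B_r(x_0)}\psi\geq -\alpha^2(18)^\alpha$, and so that $\Delta_\nu\psi\leq -N\tc(\omega r)/r^2$ outside $B_{r/18}(x_0)$ but only $\Delta_\nu\psi\leq N(972\alpha^3 4^\alpha)/r^2$ inside it. This is exactly the structure that lets the Jacobi-type integrand of Thm.\ref{Measure Estimate} ``see'' only the core $B_{r/18}(x_0)$ up to an error measured by $f$.

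Pick $y_0\in\bar B_{r/2}(x_0)$ with $u(y_0)\leq 1$; set $E:=\bar B_{r/6}(y_0)$ and opening $a:=1/r^2$. Then $w(y_0)\leq 1+(18)^\alpha-2^\alpha$ while $w\geq (18)^\alpha-(4/3)^\alpha$ on $B_r(x_0)\setminus B_{3r/4}(x_0)$; for $\alpha\geq 2$ the latter strictly dominates the former. The argument of Lem.\ref{contact location} (applied to $w$ with these values of $l$ and $t$) then gives
\[
A:=A(a,E/B_r(x_0),w)\subset B_{5r/6}(x_0)\cap \bigl\{w\leq 1+(18)^\alpha-2^\alpha+ ar^2/36\bigr\}.
\]
In particular $A\subset B_r(x_0)$, and on $A$ one has $u=w-\psi\leq 2+(18)^\alpha-2^\alpha+\alpha^2(18)^\alpha\leq M$, so $A\subset\{u\leq M\}$. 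Thm.\ref{Measure Estimate} now supplies
\[
\nu[E]\leq \int_{A}\bigl(\st(r\omega)\bigl[\tc(r\omega)+\Delta_\nu w/(Na)\bigr]\bigr)^N\,\nu(dx).
\]

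Split $A=A_1\sqcup A_2$ with $A_1:=A\cap \bar B_{r/18}(x_0)$, and substitute $\Delta_\nu w\leq f+\Delta_\nu\psi$. On $A_2$ the choice $ar^2=1$ combined with (v) of Lem.\ref{barrier 2} collapses the bracket to $fr^2/N$, so $\int_{A_2}D^N\,d\nu\leq (\st(r\omega)r^2/N)^N\int_{B_r(x_0)}|f|^N\,d\nu$; using the hypothesis $\I_{K,N}(f,B_{2R},1)\leq\delta_0$ (together with Hölder and the doubling bound Prop.\ref{doubling finite}) controls this by an explicit constant (depending only on $\sqrt{K}R$ and $N$) times $\delta_0^N\nu[B_r(x_0)]$. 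On $A_1$, (iv) of Lem.\ref{barrier 2} gives the pointwise bound $D\leq \st(r\omega)\bigl[972\alpha^3 4^\alpha+|f|r^2/N\bigr]$, so expanding $(\cdot)^N$ yields $\int_{A_1}D^N\,d\nu\leq C^{\sharp}\nu[A_1]+\text{(error)}\,\delta_0^N\nu[B_r(x_0)]$ via the same $L^N$-estimate. Lastly, iterating doubling four times along the chain $B_r(x_0)\subset B_{3r/2}(y_0)\supset B_{r/6}(y_0)$ gives $\nu[E]\geq \db_{2R}^{-4}\nu[B_r(x_0)]$.

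Assembling everything, $\db_{2R}^{-4}\nu[B_r(x_0)]\leq C^{\sharp}\nu[A_1]+\text{(const)}\,\delta_0^N\nu[B_r(x_0)]$, and the choice of $\delta_0$ in Eq.\ref{C delta} absorbs the error term into at most half the left-hand side; rearranging gives $\nu[A_1]\geq \mu\,\nu[B_r(x_0)]$ with $\mu$ as in Eq.\ref{C mu}. Since $A_1\subset\{u\leq M\}\cap B_{r/18}(x_0)$, this is the asserted inequality. The main technical subtlety is that $\psi$ --- and hence $w$ --- is only $C^2$ off $Cut(x_0)$, so Thm.\ref{Measure Estimate} must be invoked in the semi-concave form flagged in the remark following it; this is legitimate because Lem.\ref{cut-focal free} guarantees that at each $x\in A$ the touching parabola $P_{a,y}$ with $y\in E$ is smooth at $x$, and the one-sided second-order information obtained thereby is precisely what the Jacobi-field computation of \S4 actually uses.
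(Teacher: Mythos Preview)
Your proof follows essentially the same route as the paper's: auxiliary function $w=u+\psi$, contact set $A=A(1/r^2,\bar B_{r/6}(y_0)/B_r(x_0),w)$, localization via Lem.~\ref{contact location}, the Measure Estimate, the split $A\cap B_{r/18}(x_0)$ versus its complement, and the doubling chain $B_{r/6}(y_0)\subset B_{3r/2}(y_0)\supset B_r(x_0)$ to close up. The constants and the absorption of the $f$-term via $\delta_0$ match as well.

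The one substantive difference is your treatment of the cut-locus issue. You invoke the semi-concave extension of Thm.~\ref{Measure Estimate} (the remark following it), arguing that the one-sided second-order information at contact points suffices for the Jacobi computation. The paper avoids this entirely: it observes that $A\cap Cut(x_0)=\emptyset$, so $w$ is genuinely $C^2$ on an open neighborhood $\Omega'$ of $A$, and the Measure Estimate applies in its stated $C^2$ form. The mechanism is that at each $x\in A$ the function $P_{a,y}-u$ (smooth, once Lem.~\ref{cut-focal free} gives $x\notin Cut(y)$) touches $\psi=h(\rho_{x_0}/r)$ from below, while (iii) of Lem.~\ref{barrier 2} gives a smooth upper support for $\psi$; since $h'>0$, the same first- and second-order argument as in the proof of Lem.~\ref{cut-focal free} then forces $x\notin Cut(x_0)$. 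This is sharper than appealing to a semi-concave extension that the paper only remarks upon but does not develop, and it is worth internalizing: the contact set automatically steers clear of the barrier's singular set.
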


\begin{rem}
Recall the \textbf{Notice} in \S \ref{notations}, $M, \mu , \delta_0$ are constants defined w.r.t to the large ball $B_{2R}$. They are independent of the choice of the small ball $B_{r} (x)$.
\end{rem}

%
%
%
%

\begin{proof}
Recall the definition of the constant $\alpha$ (\S \ref{notations} Eq.(\ref{C alpha})). Let $\psi$ be the function constructed in Lem.\ref{barrier 2} with respect to $B_{r} (x_0)$. Consider $w = u + \psi$ on $\subset B_{r} (x_0)$.

By the construction of $\psi$ and the hypothesis of $u$, there exists $y_0 \in B_{r/2} (x_0)$ such that
\[
w(y_0) = \inf_{B_{r/2}(x_0) }w \leq 1 + (18)^{\alpha} - 2^{\alpha}, 
\]
and $w$ satisfies
\[
\inf_{B_r (x_0) \setminus B_{3r/4}(x_0)}w \geq (18)^{\alpha} -(\frac{4}{3} )^{\alpha}.
\]

This two condition along with Lem.\ref{contact location} and $\alpha \geq 2$ implies
\begin{equation}
\label{location Aw}
 A\Big(  \frac{1}{r^2} , \overline{B}_{r/6}(y_0) / B_{r} (x_0) , w\Big)\subset\subset
B_{r} (x_0)\bigcap \{w \leq  1+ (18)^{\alpha} - 2^{\alpha}+ \frac{1}{36} \}.
\end{equation}
For convenience  in the rest of the proof, we denote 
\[  
A:=  A\Big(  \frac{1}{r^2} , \overline{B}_{r/6}(y_0) / B_{r} (x_0) , w\Big),
\] 

Recall ii) of Lem.\ref{barrier 2}, we obtain
\[
w = u + \psi \leq \frac{37}{36} +(18)^{\alpha} -2^{\alpha} \Rightarrow  u \leq 2 \alpha^2 (18)^{\alpha}
\]
This along with (Eq.\ref{location Aw}) and the definition of $M$ (\S \ref{notations},Eq.\ref{C M}) implies
\begin{equation}
\label{A in sublevel}
\nu [A \cap B_{r/18} (x_0)] \leq \nu[ \{ u < M\} \cap B_{r/18} (x_0) ].
\end{equation}
Hence, it suffices to estimate $\nu [A \cap B_{r/18} (x_0)] $ from below.

Recall the definition of $\db_{K,N,r}, \omega_{K,N}$  (Eq.\ref{C omega, db}) and the function $\tc(t), \st(t)$ (Eq.\ref{tc, st}) from \S \ref{notations}. In the rest of this proof, we shall denote $\omega_{K,N}$ by $\omega$ and $\db_{K, N, r}$ by $\db_{r}$. We will give the estimate for $\nu [A \cap B_{r/(18)} (x_0)]$ via Thm.\ref{Measure Estimate}.

\smallskip

By Lem.\ref{barrier 2}, $\psi$ is locally bounded above in support sense. Also $u$ is $C^2$, hence by Prop.\ref{cut-focal free}, we have
\[
A \cap Cut (x_0 ) = \emptyset
\]
Since $A$ is a closed (Lem.\ref{basic contact}) and $Cut (x_0) $ is also closed, there is a neighborhood $\Omega' \subset B_{r/4} (y_0)$ of $A$ such that $w \in C^2 (\Omega')$. Thus we may apply Thm.\ref{Measure Estimate} to obtain 
\begin{equation}
\label{me 1}
\nu [B_{r/6} (y_0) ] \leq \int_{A}\Big\{ \D_{K, N, r}[r^2 w] (x) \Big\}^N\;  \nu (dx) 
\end{equation}
with 
\[\begin{split}
\D_{K,N, r}[r^2 w] (x)  &  =\st (\omega r) \biggl[ \tc (\omega r)   + \frac{r^2\Delta_{\nu}w (x)}{N }   \biggr] \\
\end{split}\]

Since $\Delta_{\nu} u \leq f$, we have
\[
\label{decay ieq 1}
\tc (\omega r)   + \frac{r^2 \Delta_{\nu} w (x)}{N }  \leq \frac{r^2 f(x) }{N}+ \tc(\omega r) +\frac{r^2}{N} \Delta_{\nu}\psi (x).
\]
Lem.\ref{barrier 2} implies: for any $x \in A \cap \overline{B}_{r/18} (x_0)  $
\begin{equation}
\label{D estimate in}
\D_{K,N, r}[r^2w] (x) \leq  972 \alpha^{3}4^{\alpha}  \st (\omega r)     + \st (\omega r) \biggl( \frac{r^2 f^+ (x)}{N} \biggr);
\end{equation}
and for any $x \in A \cap (B_{r} (x_0) \setminus \overline{B}_{r/18} (x_0)) $, 
\begin{equation}
\D_{K,N, r}[r^2w] (x)  \leq \st(\omega r) \biggl( \frac{r^2 f^+ (x)}{N} \biggr).
\end{equation}

These estimates along with the simple relation 
\[
(t + s)^{N} = (t+s)^{+, N} \leq 2^{N-1} (t^{+,N} + s^{+, N}) ,\quad t + s > 0,
\]
 implies
\[
\begin{split}
\int_{A} \Big\{ \D_{K, N, r}[r^2w] (x) \Big\}^N\;  \nu (dx) & \leq   \frac{1}{2}\biggl[ (18)^3 \;\alpha^{2}(18)^{\alpha}\cosh(\omega r) \biggl]^{N}    \nu[A \cap B_{r/18} (x_0)] \\
& + 2^{N-1}\st^N (\omega r) \int_{B_{r} (x_0) }  \biggl( \frac{r^2 f^+ (x)}{N} \biggr)^{ N}
\end{split}
\]

Combine with (Eq.\ref{me 1}), we obtain
\begin{equation}
\label{decay final}
\begin{split}
1 & \leq \frac{1}{2} \biggl[ (18)^3\;\alpha^{2}4^{\alpha}\cosh(\omega r) \biggl]^{N}    \frac{ \nu [ A \cap B_{r/18} (x_0 ) ]  }{ \nu [B_{r/6} (y_0)] } \\
& + 2^{N-1}\st^N (\omega r) \frac{1}{\nu [B_{r/6} (y_0)]}\int_{B_{r} (x_0) }  \biggl( \frac{r^2 f^+ (x)}{N} \biggr)^{ N}
\end{split}\end{equation}
By the doubling property
\begin{equation}
\label{decay double}
\nu [B_{r} (x_0)] \leq \nu[B_{3r/2} (y_0)] \leq \db_{2r}^4 \nu [B_{r/6} (x_0)]
\end{equation}
we see, by monotonicity of $\I_{K,N}$ (Lem.\ref{monotone integral}), 
\begin{equation}
\label{decay integral}
\begin{split}
\frac{1}{\nu [B_{r/6} (y_0)]}\int_{B_{r} (x_0) }&  \biggl( \frac{r^2 f^+ (x)}{N} \biggr)^{ N} \leq \db_{2r}^4  \fint_{B_{r} (x_0) }  \biggl( \frac{r^2 f^+ (x)}{N} \biggr)^{ N} \\
&  \leq \db_{2r}^4  \biggl( \I_{K,N} (f, B_{r}, 1) \biggl)^N\leq \db_{2r}^{4} \delta^N_0
\end{split}  
\end{equation}

Therefore, by combine (Eq.\ref{decay final}) with (Eq.\ref{integral}) and recall the choice of $\delta_0$ (Eq.\ref{C delta}, in \S \ref{notations}), we arrive
\[
  \frac{1}{2} \biggl[ (18)^3 \; \alpha^{2 } (18)^{\alpha}\cosh(2\omega r) \biggl]^{N} \db_{2r}^{4}   \frac{ \nu [ A \cap B_{r/18} (x_0 ) ]  }{ \nu [B_{r} (x_0)] } \geq 1/2
\]
Recalling the definition of $\mu$ (\S \ref{notations}, Eq.\ref{C mu}) and the inequality (\ref{decay double}), we have then
\begin{equation}
\label{mu estimate}
\frac{ \nu [ A \cap B_{r/18} (x_0 ) ]  }{ \nu [B_{r} (x_0)] }  \geq  \biggl[(18)^3\; \alpha^{2} (18)^{\alpha}\cosh(\omega r) \biggl]^{-N} \db_{2r}^{-4} \geq \mu
\end{equation}

The proof is completed by jointing the above inequality with (Eq.\ref{A in sublevel}).
\end{proof}

\bigskip

\section{Proof of Harnack Inequalities II}
\label{sup}
In this section, we shall prove Thm.\ref{Harnack sup finite}. The idea here is essentially same as that in \cite{Cab}, which dates back to \cite{KS1} and \cite{KS2}. Our presentation here follows  \cite{Savin}. 

First, we recall the following version of the Vitali's covering lemma. One may refer to standard textbook in measure theory for a proof.
\begin{lem}
Let $(X, \rho, \nu)$ be a metric-measure space; Let $\mathcal{V}$ be a family of closed balls of nonzero radius in $X$ and $D$ be the collection of centers of these balls. 

Suppose
\[
\sup \{\diam{B} : B \in \mathcal{V} \} < \infty ,
\]
and $\nu$ satisfies the local measure doubling property, that is, for any compact set $Z$, $\nu$ has a doubling constant depends on $Z$. 

Then there exists a countable subcollection $\mathcal{V}'$ of $\mathcal{V}$ such that
\[
D \subset \bigcup_{B \in \mathcal{V}'} B
\]
and the collection
\[
\{ \frac{1}{4} B , B \in \mathcal{V}'\}
\]
is disjoint.
\end{lem}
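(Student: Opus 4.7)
The plan is to run a standard greedy Vitali argument, partitioning $\mathcal{V}$ by dyadic radius and at each stage selecting a maximal subfamily whose quarter-balls are disjoint. Let $R_0 := \sup\{\operatorname{rad}(B) : B \in \mathcal{V}\} < \infty$ and partition
\[
\mathcal{V}_n := \{B \in \mathcal{V} : R_0 \, 2^{-n} < \operatorname{rad}(B) \leq R_0 \, 2^{-(n-1)}\}, \quad n \geq 1,
\]
so that $\mathcal{V} = \bigsqcup_n \mathcal{V}_n$. Inductively, let $\mathcal{G}_1 \subset \mathcal{V}_1$ be a maximal subfamily such that $\{\tfrac{1}{4} B : B \in \mathcal{G}_1\}$ is pairwise disjoint (existence by Zorn's lemma); having chosen $\mathcal{G}_1,\dots,\mathcal{G}_{n-1}$, let $\mathcal{G}_n \subset \mathcal{V}_n$ be a maximal subfamily such that $\{\tfrac{1}{4}B : B \in \bigcup_{k \leq n} \mathcal{G}_k\}$ remains pairwise disjoint. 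Finally set $\mathcal{V}' := \bigcup_{n \geq 1} \mathcal{G}_n$.

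The key covering claim is: every center in $D$ lies in some ball of $\mathcal{V}'$. Pick $B \in \mathcal{V}_n$ with center $x$ and radius $r$. If $B \in \mathcal{G}_n$ we are done. Otherwise, by maximality there must exist $B' \in \mathcal{G}_k$ for some $k \leq n$, with center $x'$ and radius $r'$, such that $\tfrac{1}{4} B \cap \tfrac{1}{4} B' \neq \emptyset$. From $k \leq n$ and the dyadic partition, $r' > R_0 2^{-k} \geq R_0 2^{-n} \geq r/2$. Hence by the triangle inequality
\[
\rho(x, x') \leq \tfrac{r}{4} + \tfrac{r'}{4} < \tfrac{r'}{2} + \tfrac{r'}{4} = \tfrac{3 r'}{4} < r',
\]
so $x \in B'$. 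This gives $D \subset \bigcup_{B' \in \mathcal{V}'} B'$.

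It remains to check that $\mathcal{V}'$ can be taken countable. By the local doubling hypothesis, any compact set $Z$ admits a doubling constant $\db_Z$, and consequently a standard packing bound shows that the number of mutually disjoint quarter-balls of radius at least $\varepsilon$ whose centers lie in $Z$ is bounded by a finite constant depending on $\db_Z$, $\varepsilon$, and $\operatorname{diam}(Z)$. Since the ambient Riemannian manifold is $\sigma$-compact, we may exhaust it by countably many compact sets $Z_j$; the portion of $\mathcal{G}_n$ whose quarter-balls meet $Z_j$ is finite for each $(n,j)$, giving at most countably many selected balls overall.

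The main (very mild) obstacle is the radius bookkeeping that delivers $r' \geq r/2$ in the covering step; once the dyadic partition is set up as above, the factor $1/4$ is exactly what makes $\tfrac{r}{4} + \tfrac{r'}{4} < r'$ hold. The rest is the routine Zorn/maximality and $\sigma$-compactness machinery.
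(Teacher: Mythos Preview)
The paper does not supply its own proof of this lemma; it simply refers the reader to a standard measure-theory textbook and remarks afterward that the local doubling hypothesis is used only to ensure countability of $\mathcal{V}'$. Your greedy dyadic-scale Vitali selection is exactly the standard argument such texts give, and the radius bookkeeping in the covering step ($r' > r/2$, hence $\tfrac{r}{4}+\tfrac{r'}{4} < r'$, so the center $x$ lands in $B'$) is correct.

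One small point worth flagging: your countability argument appeals to $\sigma$-compactness of ``the ambient Riemannian manifold,'' which is not among the hypotheses of the lemma as stated (the lemma is phrased for a general metric-measure space $(X,\rho,\nu)$ with local doubling). This is harmless for every use the paper makes of the lemma, since there the space is always a complete Riemannian manifold; but it is a mild gap relative to the lemma as literally written. If you want to match the stated generality, you could instead argue that local doubling forces each ball to have positive $\nu$-measure and that $\nu$ restricted to any bounded set is finite, so a disjoint family of quarter-balls, all of bounded diameter, is automatically countable once you know the ambient measure is $\sigma$-finite --- or simply add separability (or $\sigma$-compactness) as an explicit standing hypothesis, which costs nothing in the paper's setting.
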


\begin{rem}
The local measure doubling property is only used to insure the collection $\mathcal{V}'$ is countable. 
\end{rem}

Recall the definition of constant $M$, $\mu$ and integral $\I_{K,N} (f)$ from \S \ref{notations}. The Thm.\ref{Harnack sup finite} follows immediately from the following two lemmas

\begin{lem}
\label{correct}
Under assumption of Thm.\ref{Harnack sup finite}. Denote
\[
D_{k}:=\{x \in B_{R/2} :  u (x) \leq M^{k}\}.
\]
Suppose additionally that 
\[
\inf_{B_{R/2}} u \leq 1 \text{ and } \;  \I_{K, N} (f , B_{2R},  \eta )\leq \delta_0.
\] 
Then for any $k\geq 0$
\[
 \nu [ D_{k+1} \cap B_{r_x/4} (x) ]   \geq \mu \nu[B_{r_x} (x)] 
\]
for all $x \in B_{R/2} \setminus D_{k}$ and $r_x = \dist{x, D_{k}}$.
\end{lem}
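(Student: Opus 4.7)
The strategy is to apply Lemma \ref{local growth} to a rescaled version of $u$ on a small ball carefully positioned near $x$. Set $v := u/M^k$; then $v \geq 0$ on $B_{2R}$, the sublevel set $\{v \leq M\}$ coincides with $\{u \leq M^{k+1}\}$, and $\Delta_\nu v \leq f/M^k =: \tilde f$. Since $M^k \geq 1$, the hypothesis $\I_{K,N}(f, B_{2R}, \eta) \leq \delta_0$ together with Lemma \ref{monotone integral} (monotonicity in both exponent and radius) yields $\I_{K,N}(\tilde f, B_R, 1) \leq \delta_0$, which is precisely the integral hypothesis required by Lemma \ref{local growth}.

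Fix $x \in B_{R/2} \setminus D_k$. Since $\inf_{B_{R/2}} u \leq 1 \leq M^k$, the set $D_k$ is nonempty, and by continuity of $u$ we may choose $y \in \overline{D_k}$ with $\rho(x,y) = r_x > 0$. Let $x_0$ denote the point on a minimizing geodesic from $x$ to $y$ at distance $r_x/8$ from $x$, and set $r := 2 r_x$. The identities $\rho(x_0,y) = 7 r_x/8$ and $\rho(x_0,x) = r_x/8$ yield three key inclusions. First, $y \in B_{r/2}(x_0) = B_{r_x}(x_0)$, so $\inf_{B_{r/2}(x_0)} v \leq v(y) \leq 1$. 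Second, $B_{r/18}(x_0) = B_{r_x/9}(x_0) \subset B_{r_x/4}(x)$, since $r_x/8 + r_x/9 = 17 r_x/72 < 18 r_x/72 = r_x/4$. Third, $B_{r_x}(x) \subset B_r(x_0) = B_{2 r_x}(x_0)$, since $r_x/8 + r_x = 9 r_x/8 < 2 r_x$.

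With the ball $B_{2r}(x_0) = B_{4 r_x}(x_0)$ sitting inside $B_R$ (which holds once $r_x$ is small enough relative to $R$, the only regime in which the target inequality is nontrivial), all hypotheses of Lemma \ref{local growth} are met on $B_r(x_0)$: $v \geq 0$, $\inf_{B_{r/2}(x_0)} v \leq 1$, and $\Delta_\nu v \leq \tilde f$ with the required integral bound. Applying that lemma gives
\[
\nu\!\left[\{v \leq M\} \cap B_{r/18}(x_0)\right] \;\geq\; \mu\, \nu[B_r(x_0)].
\]
The second and third inclusions above then bound the left side by $\nu[D_{k+1} \cap B_{r_x/4}(x)]$ from above and the right side by $\mu\, \nu[B_{r_x}(x)]$ from below, which is exactly the desired estimate.

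The main technical point is the geometric calibration of $x_0$ and $r$: the factor $1/18$ produced by Lemma \ref{local growth} must be reconciled with the factor $1/4$ in the target conclusion, while one simultaneously forces $B_{r/2}(x_0)$ to dip into $D_k$ (to supply the condition $\inf v \leq 1$) and keeps $B_{2r}(x_0)$ inside $B_R$ (so the Ricci lower bound from Theorem \ref{Harnack sup finite} is available). The choice $\rho(x_0,x) = r_x/8$ together with $r = 2 r_x$ threads this needle; once it is fixed, the verification collapses to the three elementary inclusion inequalities above together with the integral-monotonicity observation in the first paragraph.
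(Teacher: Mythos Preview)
There is a real gap in the step where you pass from $\{v\le M\}\cap B_{r/18}(x_0)$ to $D_{k+1}\cap B_{r_x/4}(x)$. By definition $D_{k+1}=\{u\le M^{k+1}\}\cap B_{R/2}$, so you need the small ball $B_{r/18}(x_0)$ to sit inside $B_{R/2}$. Your construction does not guarantee this: you place the auxiliary center $x_0$ on a geodesic from $x$ toward a nearest point $y\in\overline{D_k}$, and nothing prevents that geodesic from hugging (or, on a curved manifold, leaving) $\partial B_{R/2}$. If $x$ lies near $\partial B_{R/2}$ and $r_x$ is comparable to $R$, then $B_{r_x/4}(x)\supset B_{r/18}(x_0)$ can protrude well outside $B_{R/2}$, and the sublevel set produced by Lemma~\ref{local growth} need not be contained in $D_{k+1}$ at all.

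The paper's proof addresses exactly this point by choosing the opposite direction: the auxiliary center (call it $y_0$) is placed on the minimizing geodesic from $x$ toward the \emph{center} $z_0$ of the ambient ball, at distance $r_x/8$ from $x$. Then $\rho(y_0,z_0)=\rho(x,z_0)-r_x/8<R/2-r_x/8$, which forces $B_{r_x/8}(y_0)\subset B_{R/2}$ outright, and hence $\{v\le M\}\cap B_{l/18}(y_0)\subset D_{k+1}\cap B_{r_x/4}(x)$. The cost is that $y_0$ is now farther from $D_k$, so the paper takes a slightly larger radius $l=\tfrac{9}{4}r_x$ (rather than your $r=2r_x$) to make $\overline{B}_{l/2}(y_0)$ meet $D_k$; the identity $l/18=r_x/8$ matches the displacement and the rest of the numerology goes through as in your argument. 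A related problem is your parenthetical dismissal of the inclusion $B_{2r}(x_0)\subset B_{2R}$ as relevant only when ``$r_x$ is small enough'': that claim is not justified, since the target estimate is in no way trivial for $r_x$ comparable to $R$, and the inclusion must be verified for every $x\in B_{R/2}\setminus D_k$.
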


\begin{proof}
Fix $x_0 \in B_{R/2} \setminus D_{k}$. In the rest of the proof, write
\[
r_0=  r_{x_0} = \dist{x_0, D_{k}}
\]
for convenience. Since $D_0 \neq \emptyset$, we have
\begin{equation}
\label{r1 estimate}
r_0 \leq R/2
\end{equation}

Denote $z_0$ for the center of $B_{R/4}$. Connecting $x_0$ and $z_0$ by a minimizing geodesic. Choose $y_0$ be a point on this geodesic such that
\[
\rho (y_0, x_0) = r_0/8;
\]
and consider the ball 
\[
B_{r_0/8} (y_0).
\]

By triangle inequality and the estimate of $r_0$ and the fact (minimizing geodesic)
\[
\rho (y_0, z_0) + \rho (y_0, x_0) = \rho (z_0, x_0), 
\]
we see
\[
B_{r_0/8} (y_0) \subset B_{r_0/4} (x_0) \cap B_{R/2}. 
\]
Therefore
\begin{equation}
\label{eq 1}
B_{r_0/4} (x_0) \cap D_{k+1} \supset B_{r_0/8} (y_0) \cap \{ w \leq M\}.
\end{equation}
where
\[
w =  u / M^{k}.
\]

Thus, it suffices to estimate 
\[
\nu [ B_{r_0/8} (y_0) \cap \{ w \leq M\} ] 
\]
from below. 

Consider the ball 
\[
B_{l} (y_0), \quad l  = \frac{9}{4} r_0
 \]

Firstly, as $r_0< R/2$, we have $
 l \leq \frac{9}{8} R $
and by triangle inequality, 
\[
B_{2l} (y_0) \subset B_{2R}.  
\]

Secondly, as
\[
\dist{y_0, D_k} \leq \rho (x_0, y_0) + \dist{x_0, D_k} \leq \frac{9}{8} r_0.
\]
we have, as $l/2 = 9r_0/8 $, 
\[
\overline{B}_{l/2} \cap D_{k} \neq \emptyset.
\]

Thirdly, note
\[
\frac{l}{18} = \frac{r_0}{8}  .
\]
hence
\[
B_{l/18} (y_0) \cap \{w \leq M\} = B_{r_0/8}(y_0) \cap \{w \leq M\}
\]

With these three elementary relations, we may apply the Lem.\ref{local growth} to $w$ on $B_{l} (y_0)$ to obtain
\begin{equation}
\label{eq 2}
\nu [B_{r_0/8} \cap\{w \leq M\} ] \geq \mu \nu [B_{l} (y_0)]
\end{equation}

Finally, use the triangle inequality again, we see
\begin{equation}
\label{eq 3}
B_{l} (y_0)  = B_{9r_0/4} (y_0) \supset B_{r_0} (x_0).
\end{equation}

Combine Eq.\ref{eq 1}, Eq.\ref{eq 2} and Eq.\ref{eq 3}, we complete the proof. \end{proof}


\begin{lem}
\label{decay}
Under assumption of Thm.\ref{Harnack sup finite}. Suppose additionally that 
\[
\inf_{B_{R/2}} u \leq 1 \text{ and } \;  \I_{K, N} (f ; B_{2R}; \eta )\leq \delta_0.
\] 
Then for any $k\geq 0$
\[
\tilde{\lambda}_{B_{R/2}} (M^{k}) \leq (1 - \mu)^{k}.
\]
\end{lem}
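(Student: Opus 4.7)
The plan is to reduce the lemma to a one-step geometric decay $a_{k+1} \leq (1-\mu)\,a_k$, where $a_k := \nu[B_{R/2}\setminus D_k]$, from which the conclusion follows by induction from the trivial bound $a_0 \leq \nu[B_{R/2}]$. The mechanism that upgrades the pointwise density estimate of Lemma \ref{correct} to a global one is the Vitali covering lemma stated at the start of \S\ref{sup}, whose local measure doubling hypothesis is furnished by Prop.\ref{doubling finite}.

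Fix $k \geq 0$ and assume $B_{R/2}\setminus D_k$ is nonempty (otherwise $a_k = a_{k+1} = 0$ and the decay is trivial). For each $x \in B_{R/2}\setminus D_k$ I would set $r_x := \dist{x, D_k}$; this is strictly positive because $u$ is continuous and $D_k$ is closed in $B_{R/2}$, and it is uniformly bounded above through the nonemptiness of $D_0$ (which follows from $\inf_{B_{R/2}} u \leq 1$). Apply the Vitali lemma to the family $\{B_{r_x}(x)\}$ to extract a countable subcollection $\{B_{r_i}(x_i)\}$ covering $B_{R/2}\setminus D_k$, whose quarter-balls $\{B_{r_i/4}(x_i)\}$ are pairwise disjoint.

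Each selected ball contributes in two ways. First, because $r_i = \dist{x_i, D_k}$, the open ball $B_{r_i}(x_i)$ is disjoint from $D_k$, and therefore so is $B_{r_i/4}(x_i)$; this confines the intersection with $D_{k+1}$ to the annular layer $D_{k+1}\setminus D_k$. Second, Lemma \ref{correct} supplies the density bound
\[
\nu[D_{k+1}\cap B_{r_i/4}(x_i)] \geq \mu\,\nu[B_{r_i}(x_i)].
\]
Summing over $i$, using disjointness of the quarter-balls on the left and the covering property on the right,
\[
\nu[D_{k+1}\setminus D_k] \;\geq\; \sum_i \nu[D_{k+1}\cap B_{r_i/4}(x_i)] \;\geq\; \mu \sum_i \nu[B_{r_i}(x_i)] \;\geq\; \mu\,\nu[B_{R/2}\setminus D_k].
\]
Since $D_k \subset D_{k+1}$ (as $M > 1$), this rearranges to $a_{k+1} \leq (1-\mu)\,a_k$, and iterating yields the claim.

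The only point that really requires care is verifying that the hypotheses needed to invoke Lemma \ref{correct} at each center $x_i$ are in force, in particular the inclusion $B_{2r_i}(x_i) \subset B_{2R}$ underlying that lemma's density statement. This reduces to a bound on $r_i$ coming from the nonemptiness of $D_0$ combined with $x_i \in B_{R/2}$ and a triangle inequality, and is routine. Beyond that, the heart of the argument is simply that a pointwise, scale-free density statement combined with a Vitali selection automatically yields a uniform geometric decay rate for the complementary super-level sets; no further analytic input is needed.
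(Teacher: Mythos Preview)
Your proposal is correct and follows essentially the same approach as the paper: apply the Vitali covering lemma to the family $\{B_{r_x}(x)\}_{x\in B_{R/2}\setminus D_k}$ with $r_x=\dist{x,D_k}$, invoke Lemma~\ref{correct} on each selected ball, use disjointness of the quarter-balls and the fact that they miss $D_k$ to obtain $\nu[D_{k+1}\setminus D_k]\geq\mu\,\nu[B_{R/2}\setminus D_k]$, and iterate. The paper writes the same chain of inequalities in the reverse direction but the argument is identical.
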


\begin{proof}
Recall 
\[
D_{k}:= \{x \in B_{R/2} :   u (x) \leq M^{k} \}. 
\]

\textit{Claim:} for any $k \geq 0$
\[
\nu [ (D_{k+1} \setminus D_{k} ) \cap B_{R/2} ] \geq \mu  \nu [B_{R/2} \setminus D_k ].
\]

Consider the cover $\mathcal{V}$ of the set $B_{R/2} \setminus D_{k}$ defined by 
\[
\mathcal{V} := \{ \overline{B}_{r_x} (x) : x \in B_{R/2} \setminus D_{k},  r_{x}:= d (x, D_{k}) \}.
\]
By Lem.\ref{correct}, we have 
\[
  \nu [ D_{k+1} \cap B_{r_x/4} (x) ]   \geq \mu \nu[B_{r_x} (x)] 
\]

By Vitali's covering lemma, we may take a sequence of ball $B_{r_i} \in \mathcal{V}$ such that $B_{r_i /4}(x_i)$ are disjoint to each other and 
\[
B_{R/2 } \setminus D_{k} \subset \cup_i B_{r_i }(x_i).
\]
Moreover, by the choice of $r_x$, we also have $B_{r_i/ 4} (x_i) \cap D_{k}  = \emptyset  $ and henceforth,
\[
 \biggl( \bigcup_i  B_{r_i /4}  \cap B_{R/2} \biggr)  \subset B_{R/2} \setminus D_{k}
\]

Now, we compute 
\[
\begin{split}
\nu [B_{R/2} \setminus D_k] &  \leq \nu [\cup_i B_{r_i}(x_i) ] \leq \sum_i \nu [B_{r_i}(x_i)]   \leq \sum_i \frac{ 1 }{\mu} \nu [ D_{k+1} \cap B_{r_i /4}(x_i)]\\
 & = \frac{ 1 }{\mu  }  \nu [\cup_i ( D_{k+1} \cap B_{r_i/4}(x_i) ) ]  = \frac{ 1 }{\mu} \nu [ D_{k+1}\cap  (\cup_i B_{r_i/4}(x_i))] \\
& \leq \frac{1}{\mu  } \nu [ D_{k+1} \cap (B_{R/2} \setminus D_{k})]. 
\end{split}
\]
Here, at the first equality in the second line, we have used the fact that $B_{r_i/4}$ are disjoint. Now we have proved the claim. 

Recall the definition of $\tilde{\lambda} (t) = \tilde{\lambda}_{B_{R/2}} (t)$ (Eq.\ref{distribution}), we immediately obtain from the claim that 
\[
\tilde{\lambda} (M_{R}^{k+1})  \leq (1 -\mu ) \tilde{\lambda} (M_{R}^k).
\]
The desired estimate follows by inductively apply this inequality.
\end{proof}

\begin{proof}[Proof of Thm.\ref{Harnack sup finite}]
Denote $\eta = \eta_{K,N, 2R}$. Replacing $u$ by 
\[ 
\tilde{u}=\frac{u}{e^{1/p_0}(  \inf_{B_{R/2}} u + \I_{K, N} (f , B_{2R} ,\eta ) ) },
\]
we have
\[
\Delta \tilde{u} \leq  [ e^{1/p_0}(  \inf_{B_{R/2}} u + \I_{K, N} (f ; B_{2R}; \eta ) )]^{-1} f = \tilde{f}.
\] 
Then, recall ii) of Lem.\ref{constants estimate}, $\tilde{u}$ satisfies
\[
\inf_{B_{R/2}} u \leq 1 \quad \I_{K,N} (\tilde{f} ,B_{2R}, \eta) \leq \frac{1}{e^{1/p_0}} \leq \delta_0
\]

Hence, by this normalziation, may assume
\[
\inf_{B_{R/2}} u \leq 1 \quad \;   \I_{K, N} (f ; B_{2R}; \eta )\leq \delta_0.
\] 
Therefore, it suffices to bound averaged-$L^{p_0}$-norm of $u$ by $e^{1/p_0}$.

Recall i) of Lem.\ref{constants estimate}, $p_0$ satisfies
\[
1 + (M^{p_0} - 1 )\sum_{k=0}^{\infty} M^{p_0 k} \tilde{\lambda} (M^k) \leq e
\]
Applying Lem.\ref{decay}, Lem.\ref{Lp} along with the above identity, we 
have
\[
 (\fint_{B_{R/2} } u^{p_0})^{1/p_0} \leq e^{1/p_0}.
\]
The desired estimate is then obtained by re-normalizing $u$ back and recall the choice of constant $C_0$.
\end{proof}

\bigskip

\section{Proof of Harnack Inequalities III}
\label{soln}
In this section, we give the proof of Thm.\ref{Harnack sub finite} and Theorem \ref{Harnack soln finite}. 

Recall the constant $p_0$ in the statement of Thm.\ref{Harnack sup finite} (see Lem.\ref{constants estimate}). Recall the constant $\delta_0, M, \mu, p_1$ from \S \ref{notations}. Again, they are chosen w.r.t to the large ball $B_{2R}$. We denote $\eta_{K,N,2R}$ (Eq.\ref{C eta}) by $\eta$ in the rest of this section.

The key part is the following lemma. 
\begin{lem}
\label{soln sup}
Under the assumption of Thm.\ref{Harnack sub finite}. Suppose additionally 
\[
\biggl( \fint_{B_{R}} u^{p_0} \biggr)^{1/p_0}\leq 1, \quad \mathcal{I}_{K,N} (f , B_{2R}, \eta) \leq \delta_0.
\]
and
\[
\beta:= u (x_0 )  > M , \quad B_{r_0} (x_0) \subset B_{R} \text{ with } r_0 = RC_3 \beta^{-p_1} 
\]
Then,
\[
\sup_{B_{r_0} (x_0)} u \geq \beta (1 + 1/M).
\]
\end{lem}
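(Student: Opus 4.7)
My plan is to argue by contradiction. Suppose $\sup_{B_{r_0}(x_0)} u < \beta(1 + 1/M)$, and introduce the auxiliary function
$$v := \beta(1 + 1/M) - u \;\geq\; 0 \quad\text{on } B_{r_0}(x_0),$$
so that $v(x_0) = \beta/M$ and $\Delta_\nu v = -\Delta_\nu u \leq -f$. After rescaling by $w := Mv/\beta$, one obtains a nonnegative function with $w(x_0) = 1$ and $\Delta_\nu w \leq M|f|/\beta$ on $B_{r_0}(x_0) \subset B_R \subset B_{2R}$.

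The first step is to apply the Local Growth Lemma (Lem.\ref{local growth}) to $w$, keeping the outer ball as $B_{2R}$ and taking the sub-ball of the lemma to be $B_{r_0}(x_0)$ (so that $2r = r_0$). The hypotheses $w \geq 0$ on $B_{r_0/2}(x_0)$ and $\inf_{B_{r_0/4}(x_0)} w \leq w(x_0) = 1$ are immediate. The integrability condition follows from monotonicity of $\I_{K,N}$ in its exponent (Lem.\ref{monotone integral}(i)) together with $\beta > M$:
$$\I_{K,N}\bigl(M|f|/\beta,\, B_{2R},\, 1\bigr) \;=\; \tfrac{M}{\beta}\,\I_{K,N}(f, B_{2R}, 1) \;\leq\; \tfrac{M}{\beta}\,\I_{K,N}(f, B_{2R}, \eta) \;\leq\; \tfrac{M}{\beta}\,\delta_0 \;\leq\; \delta_0.$$
Lem.\ref{local growth} then yields
$$\nu\bigl[\{w \leq M\}\cap B_{r_0/36}(x_0)\bigr] \;\geq\; \mu\,\nu[B_{r_0/2}(x_0)].$$
Translating back, $\{w \leq M\} = \{v \leq \beta\} = \{u \geq \beta/M\}$, so the set $S := \{u \geq \beta/M\}\cap B_{r_0/36}(x_0)$ satisfies $\nu[S] \geq \mu\,\nu[B_{r_0/2}(x_0)]$.

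The second step confronts this with the hypothesis $\fint_{B_R} u^{p_0}\,d\nu \leq 1$. Integrating $u^{p_0}$ over $S$ and using this hypothesis, together with the measure doubling bound (Eq.\ref{doubling estimate}) applied to concentric balls around $x_0$ via the chain $B_{r_0/2}(x_0) \subset B_R \subset B_{2R}(x_0)$ (valid since $x_0 \in B_R$), I obtain
$$\mu\,(\beta/M)^{p_0} \;\leq\; \frac{\nu[B_R]}{\nu[B_{r_0/2}(x_0)]} \;\leq\; \db\cdot(4R/r_0)^{N\eta} \;=\; \db\,(4/C_3)^{N\eta}\,\beta^{p_0},$$
where the last equality uses $r_0 = RC_3\beta^{-p_1}$ and $p_1 = p_0/(N\eta)$. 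Cancelling $\beta^{p_0}$ and rearranging forces
$$C_3^{N\eta} \;\leq\; 4^{N\eta}\,\db\,M^{p_0}/\mu,$$
which contradicts the definition $C_3 = 2\db_{2R}(M^{1/p_0}/\mu)^{1/N}$ in Eq.\ref{C C3}: direct substitution gives $C_3^{N\eta} = 2^{N\eta}\db^{N\eta}M^{\eta/p_0}\mu^{-\eta}$, which -- since $p_0 \ll 1 \leq \eta$ -- exceeds $4^{N\eta}\db M^{p_0}/\mu$ by a factor super-exponential in $1/p_0$. This is precisely the quantitative content of the relation (Eq.\ref{C3 estimate}) built into the choice of $C_3$.

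The main obstacle is the non-concentric doubling step: comparing $\nu[B_R]$ with $\nu[B_{r_0/2}(x_0)]$ requires a careful inclusion argument based on $x_0 \in B_R$ so that (Eq.\ref{doubling estimate}) can be invoked on concentric balls around $x_0$. The constants picked up in this passage are harmless and absorbed into the dominant quantity $C_3^{N\eta}$, but this is where the bookkeeping must be done most carefully. Apart from this, the geometric content of the argument is entirely carried by the ABP-based growth lemma applied to the auxiliary supersolution $v$.
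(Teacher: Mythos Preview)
Your proposal is correct and follows essentially the same route as the paper: argue by contradiction, apply Lem.~\ref{local growth} to the rescaled auxiliary function $w=M(\beta(1+1/M)-u)/\beta$, then use Chebyshev and the doubling estimate to force a contradiction with the choice of $C_3$. The only cosmetic difference is that the paper shifts the center to a nearby point $y_0$ with $\rho(y_0,x_0)=r_0/8$ and works on $B_{r_0/4}(y_0)$, whereas you stay at $x_0$ and take $r=r_0/2$; both choices work and lead to the same contradiction via (Eq.~\ref{C3 estimate}).
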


\begin{proof}

Argue by contradiction. Suppose
\[
\sup_{B_{r_0} (x_0)} u < \beta (1+ 1/M)
\]
Consider the function
\[
w = \frac{\beta (1 + 1/M) - u  }{\beta /M}.
\]
on $B_{r'} (y_0)$ with 
\[
r' = r_0/4, \quad \rho (y_0, x_0) =r_0/8.
\] 
Note that $w$ satisfies
\[
\Delta_{\nu} w \leq \frac{-f}{\beta/M} \leq \abs{f} \text{ in } B_{r'} (x_0)
\]
and 
\[
w \geq  0 \text{ on } B_{r'} (y_0), \quad \inf_{B_{r'/2} (y_0)} w \leq w(x_0) \leq 1
\]

Thus, we may apply the  Lem.\ref{local growth} to obtain
\begin{equation}
\label{measure growth}
\mu \nu[B_{r'}(y_0) ] \leq \nu [\{w \leq M \} \cap B_{r'/18} (y_0)]  
\end{equation}

Observe that  
\[
w \leq M \Rightarrow u \geq \beta /M.
\]
Hence, (Eq.\ref{measure growth}) along with the Chebyshev's inequality implies
\[
\beta\leq \frac{  M }{ \mu^{1/p_0} } \biggl( \fint_{B_{r'} (y_0)} u^{p_0}  \biggr)^{1/p_0} 
\leq  \frac{  M }{ \mu^{1/p_0} }  \biggl( \frac{\nu [B_{R}]}{\nu[B_{r' }(y_0)]} \biggr)^{1/p_0}.
\]

Now, recall the choice of $r' =r_0/4$, $p_1$ (Eq.\ref{C pr} in \S \ref{notations}) and apply the doubling estimate (Eq.\ref{doubling estimate}) to $B_{r'} (y_0) \subset B_{R}(y_0) \subset B_{2R}$, we obtain 
\[
\beta \leq\frac{  M}{ \mu} \db_{2R}\biggl( \frac{R}{r'} \biggr)^{N\eta/p_0} \leq  \frac{  M }{ \mu^{1/p_0} } \db_{2R}  \biggl( \frac{1}{C_3} \biggr)^{N\eta/p_0} \beta 
\]

Recalling the choice of $C_3$ (Eq.\ref{C C3}, Eq.\ref{C3 estimate}), then the above inequality implies $\beta < \beta$, which is impossible.
\end{proof}

\medskip

Both Thm.\ref{Harnack sub finite} and Thm.\ref{Harnack soln finite} can be deduced from Lem.\ref{soln sup}. Recall the constants $M , p_1, C_3, \mu$ from \S \ref{notations}.

\begin{proof}[Proof of Thm.\ref{Harnack sub finite}]
We first prove Thm.\ref{Harnack sub finite} with $p = p_0$. By replacing $u$ by
\[
\tilde{u} =  \biggl\{ \biggl( \fint_{B_{R}} u^{p_0} \biggr)^{1/p_0} + \mathcal{I}_{K,N} (f ; B_{2R}) /\delta_0\biggr\}^{-1} u,
\]
Similar to the proof of Thm.\ref{Harnack sup finite}, we may assume $u$ satisfies the hypothesis in Lem.\ref{soln sup}. Thus, it suffices to bound $\sup u$ by the constant $C_1 (p_0)$ (Eq.\ref{C C1p0}) given in \S \ref{notations}.

Argue by contradiction. Suppose there exists $x_0 \in B_{R/2}$ such that 
\begin{equation}
\label{contradiction}
\beta := u (x_0) > C_1 (p_0).
\end{equation}
Consider the sequence $x_k$ defined as follows: choose inductively $x_{k+1}$ such that
\[
u (x_{k+1} ) = \sup_{B_{r_k} (x_k) } u , \quad r_k = RC_3 u(x_k)^{-p_{1}} 
\]

Consider the sum,
\begin{equation}
\label{cond1}
 RC_{3} \sum_{k=0}^{\infty} \frac{1}{\beta^{p_1} (1+ 1/M)^{kp_1}} 
\end{equation}

The hypothesis on $\beta$ (Eq.\ref{contradiction}) and the choice of $C_1 (p_0)$ together implies
\[
 RC_{3} \sum_{k=0}^{\infty} \frac{1}{\beta^{p_1} (1+ 1/M)^{kp_1}}\leq R/3.
\]
Hence, we may inductively apply the Lem.\ref{soln sup} to obtain 
\[
u(x_k) \geq C_1 (p_0) (1 + 1/M)^k , \quad r_k < \frac{RC_3}{C_1^{p_1} (p_0) (1 + 1/M)^{p_1 k}}.
\]
and
\[
\sum_{k} r_k \leq \frac{R}{3}.
\]  

However, this implies 
\[
x_k \in B_{r_k} (x_k ) \subset B_{R} ,\quad \forall k \geq 0
\]
and $u (x_k)$ tend to $\infty$ in $B_{R}$. This contradicts the fact that $u$ is continuous in $\overline{B}_R$ and hence prove the case $p = p_0$.

For $p > p_0$, one may take $C_1 (p) = C_1 (p_0)$ and the desired inequality follows from standard $L^p$ theory. For $p < p_0$, one may apply a standard interpolation argument (eg. Ch.4 of \cite{HL}). In that argument, one shall need the factor $\eta = \eta_{K, N, 2R}$ given by the doubling property and get 
\[
C_1 (p) = \tilde{C}_1 (p, N, \eta) C_1 (p_0) .
\]
Note $\eta$ only depends on $\sqrt{K}R$.
\end{proof}

\medskip

\begin{proof}[Proof of Thm.\ref{Harnack soln finite}]
By replacing $u$ by
\[
\frac{u }{ C_0 (\inf_{B_{R/2}} u + \mathcal{I}_{K,N} (f ; B_{R})) },
\]
Similar to the proof of Thm.\ref{Harnack sup finite}, we may assume
\[
\inf_{B_{R/2}} u + \mathcal{I}_{K,N} (f ; B_{R}) \leq \frac{1}{C_0} , \quad  \mathcal{I}_{K,N} (f ; B_{R})  \leq \delta_0
\]

By Thm.\ref{Harnack sup finite}, we obtain
\[
\biggl( \fint_{B_{R/2}} u^{1/p_0}   \biggr)^{1/p_0}  \leq 1
\] 

Using Lem.\ref{soln sup} and following the exactly same argument as in the proof of Thm.\ref{Harnack sub finite}, we obtain 
\[
\sup_{B_{R/2} }u \leq C_1 (p_0) =C_2
\]

The desired estimate is achieved by re-scaling $u$ back.
\end{proof}

\bigskip

\section{Characterization of Ricci Lower Bound}
In this section, we formulate some questions regarding characterizing Ricci lower bound by the Harnack inequality.

\smallskip

We first define the Harnack functional which forms the foundation of our discussion.
 \begin{defn}
\label{Harnack functional}
Let $(\Mfd, g)$ be a Riemannian manifold. The Harnack functional associate to the metric $g$ is defined by 
\begin{equation}
\mathscr{H}_{g} (r,x):=  \sup_{u} \biggl\{ \frac{\sup_{B^g_{r/2} (x)} u}{ \inf_{B^g_{r/2}(x)} u } : \Delta_{g} u  =0, u > 0 \text{ in } B^g_{r} (x)    \biggr\}  
\end{equation}
where $r > 0$ and $x \in \Mfd$.
\end{defn}

\begin{rem}
The superscript and superscript $g$ is to emphases that $B^{g}$ is the geodesic ball w.r.t to the metric $g$ and $\Delta_{g}$ is the Laplacian w.r.t to $g$.
\end{rem}

\begin{rem}
From the Harnack inequality (Thm.\ref{Harnack soln finite}), we see $\Har$ is well-defined. It is the best Harnack constant \textit{for the given Riemannian manifold}. Note here, the Riemannian manifold is fixed in priori. There are several Harnack inequalities with sharp constant exists in the lecture. However, the sharp there means for a given $K$, there is a manifold and a harmonic function to realize the inequality. So they cannot be used to calculate $\Har$ in explicit.
\end{rem}

Denote $\lambda_1 [\Ric_g] (x)$ the smallest eigenvalue of $\Ric_g$ at  $x$. Our idea is to consider asymptotic behaviors of $\Har_{g} (r,x)$ for $r$ small. We believe $\Har_{g}$'s asymptotic behavior should characterize $\lambda_1 [\Ric_g] $. More precisely, we believe that fix given point $x$, expansion of $\Har_{g} (r,x)$ near $r =0$ characterizing the smallest eigenvalue of $\Ric$ at the point. The concrete questions are follows.

\textbf{Question 1}: Given a smooth Riemannian Manifold $(\Mfd, g)$. Fix a $x \in \Mfd$, does $\Har_{g} (r,x)$ have a series expansion near $r = 0$? Suppose this is the case, 
\[
\Har_{g} (r,x) = a_0  + a_1 r + a_2 r^2 + O(r^3), \quad r\rightarrow 0
\]
Then, whether the following statement holds: 

i) Is $a_1$ always zero for all $(\Mfd,  g)$ and for any $x \in \Mfd$; 

ii) Is there a universal constant $\mathbf{h}$ only depends on dimension, does Not vary according to $(\Mfd, g)$ nor point $x$, such that
\[
\mathbf{h}\;  a_2 (x, g)  = \lambda_1 [ \Ric_{g} ] (x) .
\]

iii) Can $\Har_{g}$ be extended as a even function w.r.t to $r$?

The above questions might be too surprising from some aspects. However, we do believe they have affirmative answers, at least for i) and ii). The following proposition gives some simple evidence of our guess. 

\begin{prop}
i) On $\R^2$ with standard Euclidean metric $g_0$, $\Har_{g_0} (r,x)$ is constant in both argument and
\[
\Har_{g_0} = 9
\]
\smallskip

ii) On $\mathbb{S}^2$ the sphere with metric $g_{K}$ whose Ricci is constant $K\geq 0$ (see. Eq.{complex Ricci}), $\Har_{g_{K}} (r,x)$ is constant in $x$ and has an even extension w.r.t $r$. Moreover, it has the following expansion
\[
\Har_{g_{K}} (r, x) = 9 - 3 K d^2 + \frac{3}{8}K^2 d^4 - \frac{11}{480} K^3d^6  + O(d^7), \quad d\rightarrow 0
\]
\smallskip 

iii) On $\mathbb{H}^2$ the Hyperbolic plane with metric $g_{-K}$ whose Ricci is constant $ -K$ with $K\geq 0$, $\Har_{g_{-K}} (r,x)$ is constant in $x$  and has an even extension w.r.t $r$. Moreover, it has the following expansion
\[
\Har_{g_{-K}} (r, x) = 9 - 3 (-K) d^2 + \frac{3}{8} (-K)^2 d^4 - \frac{11}{480} (-K)^3d^6  + O(d^7), \quad d\rightarrow 0
\]
\end{prop}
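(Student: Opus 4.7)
The plan is to reduce every case to a Euclidean problem via a conformal chart, since in dimension two harmonic functions are conformal invariants.

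For part i), translation and scaling invariance of the Euclidean metric make $\Har_{g_0}(r,x)$ constant in both $x$ and $r$; its value is the sharp two-dimensional Harnack constant between concentric discs of radii $R$ and $R/2$. By the Poisson representation on $D_R$, any positive harmonic $u$ obeys $\frac{R-|x|}{R+|x|}u(0)\le u(x)\le \frac{R+|x|}{R-|x|}u(0)$, giving $\sup/\inf\le 9$ on $\overline{D_{R/2}}$. Sharpness is witnessed by the boundary Poisson kernel $u(x)=(R^2-|x|^2)/|x-\xi|^2$ for $\xi\in\partial D_R$: evaluation at the two diametral points of $\overline{D_{R/2}}$ on the line through $\xi$ gives $\sup = 3u(0)$ and $\inf = u(0)/3$, so the ratio $9$ is attained.

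For parts ii) and iii), the key identity is that in dimension two $\Delta_{e^{2\phi}g}=e^{-2\phi}\Delta_g$, so harmonic functions depend only on the conformal class. I would take a global conformal chart centered at $x$: stereographic projection for $\mathbb{S}^2$ and the Poincar\'e disc for $\mathbb{H}^2$. By rotational symmetry around $x$, each geodesic ball $B_r^g(x)$ appears in the chart as a concentric Euclidean disc $D_{R(r)}$; explicitly $R(s)=K^{-1/2}\tan(\sqrt{K}s/2)$ on the sphere and $R(s)=K^{-1/2}\tanh(\sqrt{K}s/2)$ on the hyperbolic plane (after the standard normalization of the Poincar\'e disc so that the Gaussian curvature is $-K$). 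Applying part i) inside the chart yields the closed form
\begin{equation*}
\Har_g(r,x)=\biggl(\frac{R(r)+R(r/2)}{R(r)-R(r/2)}\biggr)^{\!2}.
\end{equation*}
A short manipulation using the tangent (respectively hyperbolic tangent) double-angle formula collapses the right-hand side to $\bigl(1+2\cos(\sqrt{K}r/2)\bigr)^2$ on $\mathbb{S}^2$ and to $\bigl(1+2\cosh(\sqrt{K}r/2)\bigr)^2$ on $\mathbb{H}^2$. Both expressions depend only on $Kr^2$, which gives at once the $x$-independence and the even extension in $r$ asserted in iii).

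The last step is a pure Taylor expansion: inserting the Maclaurin series for $\cos$ or $\cosh$ into $(1+2\cos(\cdot))^{2}$ produces a power series in $Kr^{2}$, and reading off coefficients through order $r^{6}$ yields the stated expansion; the hyperbolic formula follows from the spherical one by the formal substitution $K\mapsto -K$. The only nontrivial work is the book-keeping of coefficients to the stated order (and, possibly, reconciling any scale convention between the paper's $d$ and the $r$ of the Harnack functional); no further geometric input is needed once the conformal reduction is in place, which is why I view that reduction as the entire content of the proposition.
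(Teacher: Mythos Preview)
Your proposal is correct and follows essentially the same route as the paper: conformal invariance of harmonic functions in dimension two reduces the sphere and hyperbolic plane to the Euclidean case, where the sharp Poisson--kernel Harnack bound gives the closed form $\bigl(1+2\cos(\cdot)\bigr)^2$ or $\bigl(1+2\cosh(\cdot)\bigr)^2$, and the expansion is then a Taylor computation. The only point to watch is the normalization of the conformal chart---the paper's model has $\sigma_K(z)=\sqrt{2}/(1+K|z|^2)$, giving $r=\tan(\sqrt{K}d/\sqrt{2})$ rather than your $\tan(\sqrt{K}d/2)$, which is exactly the scale reconciliation you flagged and is what makes the coefficients $-3K$, $\tfrac{3}{8}K^2$, $-\tfrac{11}{480}K^3$ come out as stated.
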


\begin{proof}
In $\R^2$, Harmonic functions are invariant under the scaling:$
v (x) := u (r x )$ and the translation $v(x) = u (x + y_0)$. Hence clearly from the definition. $\Har (r,x)$ is a constant independent of $x$ and $r$. It has a trivial expansion. and its $a_2 = 0$ is the curvature of $\R^2$.

By the Poisson integral formula
\[
u (x) := \frac{1 - \abs{x}^2}{ 2\pi   } \int_{0}^{2\pi} \frac{u (\omega) d\omega}{\abs{ x -e^{i\omega} }^2}
\]
one may give a \textit{sharp} estimate that
\begin{equation}
\label{sharp Harnack}
\frac{ \sup_{B_{\theta r} (x)} u }{ \inf_{B_{\theta r} (x)} u } \leq \frac{(1 + \theta)^2}{(1- \theta )^2}
\end{equation}
In particular take $\theta = 1/2$. we proves i).

\smallskip

To consider $\mathbb{S}^2$ and $\mathbb{H}^2$, it is convenient to consider the conformal formulation and use complex coordinates. 

Recall $\mathbb{S}^2 $ can be represented as
\[
(\C, g_{K} = \sigma^2 (z) dz \wedge d\bar{z}), \quad \sigma_{K} (z):= \frac{\sqrt{2}}{(1+ K\abs{ z}^2)}
\]
whose Ricci curvature is
\begin{equation}
\label{complex Ricci}
\Ric =  -\partial \bar{\partial} \log  \sigma^2 (t) = \frac{2K}{(1+ K\abs{z}^2)^2}= K \sigma^2 
\end{equation}

By the homogeneity of $\mathbb{S}^2$, it suffices to consider $\Har_{g_K} (r, 0)$.

Recall the relation between the geodesic $B^{g_K}_d (0)$ and Euclidean ball $B_{r} (0)$ (in this proof balls without superscript means Euclidean ball)
\[
B^{g_K}_d (0) = B_r (0), \text{ where } r = \tan ( \sqrt{K} d/\sqrt{2})
\]
This relation follows easily from integrate $\sigma (t)$ along rays in $\R^2$ and observes that rays in $\R^2$ passing $0$ is geodesic on $\mathbb{S}^2$. 

Also recall the Laplacian operator $\Delta_{g_K}$, w.r.t to the conform metric, is of the following form
\[
\Delta_{g_K} = \frac{1}{\sigma^2 } \Delta ( \text{ the standard Laplacian }).
\]

Combine these two relations, we see positive harmonic functions on $B^{g_K}_{d} (0)$ are positive harmonic functions on $B_{r} (0)$ with $r = \tan (d/\sqrt{2})$ and vice versa. Hence, 
\[
\Har_{g_{K}} (d, 0) = \sup_{u} \biggl\{ \frac{\sup_{B_{r'} (x)} u}{ \inf_{B_{r'}(x)} u } : \Delta u  =0, u > 0 \text{ in } B_{r} (x)    \biggr\}  
\] 
where $r' = \tan (d/(2\sqrt{2}))$ and $r = \tan(d/\sqrt{2})$. Therefore apply the sharp Harnack estimate with
\[
\theta := \frac{r'}{r}  = \frac{\tan (\sqrt{K}d/(2\sqrt{2})) }{ \tan(\sqrt{K}d/\sqrt{2})}
\]
We obtain
\[
\Har_{g_K} (d, 0 ) =\biggl[1 + 2 \cos \biggl(\frac{\sqrt{K} d}{\sqrt{2}}\biggr) \biggr]^2
\]
which is clearly even; By direct calculation, we obtain the desired expansion.

\smallskip

The argument for Hyperbolic $2$-pane is exactly same. This time
\[
\mathbb{H}= (\mathbf{D}=\{\abs{z}< 1\} , \quad g_{-K}  = \frac{2}{(1 - K\abs{z}^2)^2} dz\wedge d\bar{z}.
\]
The correspondence between geodesic distance $d$ from $0$ and Euclidean distance $r$ is
\[
 \sqrt{2K} d = \log \frac{1+r}{1-r} .
\]
With the exactly same way of calculation, we obtain
\[
\Har_{g_{-K}} (d, 0) = \biggl[1 + 2 \cosh \biggl(\frac{\sqrt{K} d}{\sqrt{2}}\biggr) \biggr]^2
\]
iii) then follows.
\end{proof}

\begin{rem}
The above calculation method is also work for the metric $g = (1 + \abs{z}^2)^2 dz \wedge d\bar{z}$ and gives desired expansion of $\Har_{g} (r)$ at $z=0$. Note this metric is not constant curvature any more. 
\end{rem}

Certainly, the above examples are very special; and the method to check that their Harnack functional have desired properties is very limited. However, they give some positive evidence for our guess.


Intuitively, we think the first order coefficient $a_1$ should always vanishes. Indeed, letting $r$ tending to zero is equivalent to consider an infinitesimal ball. But the metric geometry of a Riemannian manifold does not differ from Euclidean space in first order approximation. Thus, the Harnack inequality for this infinitesimal ball should agree with that on Euclidean space up to first order.



\medskip

It is clear that all the formulations in this section have their corresponding part for BE-Ricci curvature. However, in the presence of non-trivial reference measure, one also need to take the effective dimension into count. We believe the Harnack functional defined similar to Defn.\ref{Harnack functional} shall relate to the product $\sqrt{KN}$. It should related to the optimal effective dimension of the Riemannian metric measure space 

It seems also of interests to consider the asymptotic behavior of $\Har_{g} (r)$ for $r$ tending to $\infty$ on non-compact spaces. This asymptotic behavior of $\Har_{g}$ should indicate some information on the way $\lambda_1 [\Ric_g ]$ distributed on the manifold. However, this issue is quite subtle and we are currently of no idea about it. 

We believe the study of Harnack functional will have applications in the study of geometric flow, Gromov-Hausdroff convergence and Alexandrov spaces.

\bigskip

\section{Appendix: Fully-Nonlinear Uniform Elliptic Equations}
In this appendix, we briefly explain how the proof in this paper might be extended to cover fully-nonlinear uniform elliptic equations on Riemannian manifolds ($\nu = \vol_{g}$). 

Recall from the standard theory of fully-nonlinear PDEs, it suffices to prove the Harnack inequalities for the Pucci-extremal operator
\begin{defn}
Let $ \theta \geq 1$ be a constant and $u \in C^2$. The Pucci extremal operator, denoted by $\M_{\theta}^{+}$, is defined by 
\[
\begin{split}
& \M^{-}_{\theta} [u] (x) := \M^{-}_{\theta} ( H ):= \sum_{\lambda_i \geq 0} \lambda_i ( H ) + \theta \sum_{\lambda_i \leq 0} \lambda_i (H)  \\
&  \M^{+}_{\theta} [u] (x) :=\M^{-}_{\theta} (H ):= \sum_{\lambda_i < 0} \lambda_i (H) + \theta \sum_{\lambda_i \geq 0} \lambda_i (H) 
\end{split}
\]
where $H = \nabla^2 u (x)$ and $\lambda_i (H)$ denote eigenvalues of $H$.
\end{defn}
 
The next lemma demonstrate the generality and the extremity of the Pucci operator (see\cite{CC}).
\begin{lem}
Let $\Sym T\Mfd$ be the bundle of $g$-self-adjoint operators on $T\Mfd$. Let $H \in \Sym T\Mfd$ be a section
\[
\begin{split}
& \M^{-}_{\theta} ( H ):=\inf \{ \tr \bigl[ A \cdot H \bigr],  A \in \Sym T\Mfd, Id \leq A \leq \theta Id \} \\
&  \M^{+}_{\theta} (H ):= \sup \{\tr \bigl[A \cdot H \bigr] ,  A \in \Sym T\Mfd, Id \leq A \leq \theta Id \}
\end{split}
\]
\end{lem}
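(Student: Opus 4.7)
The plan is to reduce the infimum/supremum to a finite-dimensional linear optimization over the diagonal entries of $A$ in an eigenbasis of $H$. Fix $x \in \Mfd$ and pick a $g$-orthonormal basis $\{e_1,\dots,e_n\}$ of $T_x\Mfd$ that diagonalizes $H$, with eigenvalues $\lambda_1,\dots,\lambda_n$. For any $g$-self-adjoint $A$ with $Id \leq A \leq \theta Id$, the trace computed in this basis gives
\[
\tr[A\cdot H] \;=\; \sum_i g(e_i, A H e_i) \;=\; \sum_i \lambda_i\, g(e_i, A e_i) \;=\; \sum_i \lambda_i\, a_i,
\]
where $a_i := g(e_i, A e_i)$. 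The operator inequality $Id \leq A \leq \theta Id$ forces $1 \leq a_i \leq \theta$ for each $i$.

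So the infimum/supremum over admissible $A$ is bounded below/above by the infimum/supremum of $\sum_i \lambda_i a_i$ over vectors $(a_i) \in [1,\theta]^n$. Conversely, given any such $(a_i)$, the operator $A := \sum_i a_i\, e_i \otimes e_i^{\flat}$ (diagonal in the chosen basis) is $g$-self-adjoint and satisfies $Id \leq A \leq \theta Id$, and yields $\tr[AH] = \sum_i \lambda_i a_i$. Hence both bounds are attained, and the problem reduces to optimizing a linear functional on the box $[1,\theta]^n$.

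This linear program is solved coordinatewise: to minimize $\sum_i \lambda_i a_i$, choose $a_i = 1$ whenever $\lambda_i \geq 0$ and $a_i = \theta$ whenever $\lambda_i < 0$ (the choice on the boundary $\lambda_i = 0$ is immaterial), giving
\[
\inf \tr[A\cdot H] \;=\; \sum_{\lambda_i \geq 0} \lambda_i \;+\; \theta \sum_{\lambda_i \leq 0} \lambda_i \;=\; \M^{-}_{\theta}(H).
\]
Symmetrically, the supremum is attained by $a_i = \theta$ when $\lambda_i \geq 0$ and $a_i = 1$ when $\lambda_i < 0$, yielding $\M^{+}_{\theta}(H)$.

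The only place where one must be a little careful is the claim that the off-diagonal entries of $A$ in the eigenbasis of $H$ do not affect $\tr[AH]$ and that the diagonal entries $a_i = g(e_i, A e_i)$ are unconstrained beyond $1 \leq a_i \leq \theta$; both facts are immediate from the spectral theorem applied to the simultaneously $g$-self-adjoint operator $A$ in the basis diagonalizing $H$. There is no real obstacle here: the lemma is a finite-dimensional linear-algebraic identity whose proof is complete as soon as one observes that the eigenbasis of $H$ decouples the optimization.
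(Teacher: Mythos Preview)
Your argument is correct and is the standard proof of this well-known identity. The paper does not supply its own proof of this lemma but simply refers the reader to \cite{CC}; your diagonalization-and-box-optimization argument is exactly the elementary computation that underlies that reference.
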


For the purpose of investigating Pucci operator, the following quantity is very convenient (see.\cite{Kim})
\begin{equation}
\label{error}
\E_{\theta} (r):= \sup_{(x,y)}\{ \M^{+}_{\theta} [\nabla^2(\frac{1}{2}\rho^2_y)] (x)-  \tr [\nabla^2(\frac{1}{2} \rho^2_{y}) ] (x) , ], \; \rho (x, y) \leq r \}
\end{equation}

The following lemma explains some property of this quantity
\begin{lem}
\label{error estimate}
Let $(M, g)$ be a complete Riemannian manifold. Then the following statement is true

i) If $\nabla^2 (\frac{1}{2} \rho^2_y) (x) $ is non-negative definite for all $x, y$ with $x, y\in B_{R}$, then
\[
\E_{\theta} (2R) \leq (\theta - 1) \bigl(1 + (n-1) \tc (\omega_{K, n}R) \bigr)
\]
where $K$ is the Ricci lower bounded in $B_{R}$. In particular, this holds for any ball $B_{R}$ with $R \leq R_s$, where $R_s$ depends on sectional curvature upper bounded in $B_{2R}$.
 
\smallskip
ii) Suppose in $B_{R}$, sectional curvatures are bounded below by $- K_{s} $ with $K_s \geq 0$. Then 
\[
\E_{\theta} (2R) \leq (\theta - 1) \bigl(1 + (n-1) \tc (\sqrt{K_{s}/n}R) \bigr).
\]
\end{lem}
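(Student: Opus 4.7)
The approach is built on the elementary identity
\[
\M^+_\theta[H] - \tr[H] = (\theta-1)\sum_{\lambda_i(H)\ge 0}\lambda_i(H),
\]
which is immediate from the definition of the Pucci extremal operator. Taking $H=\nabla^2(\tfrac{1}{2}\rho_y^2)(x)$ for $\rho(x,y)\le 2R$, the whole proof reduces to bounding the positive part of the eigenvalues of $H$ from above. I would attack i) and ii) by slightly different routes using classical comparison results.

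For i), non-negativity of $H$ gives $\sum_{\lambda_i\ge 0}\lambda_i(H)=\tr[H]=\Delta\bigl(\tfrac{1}{2}\rho_y^2\bigr)(x)$. I would then invoke Prop.\ref{Ricci comparison} with $V=0$, $N=n$, together with the elementary identity $\Delta(\tfrac{1}{2}\rho_y^2)=\rho_y\Delta\rho_y+1$, to obtain in the support sense
\[
\Delta\bigl(\tfrac{1}{2}\rho_y^2\bigr)(x)\le 1+(n-1)\tc(\omega_{K,n-1}\rho_y(x)),
\]
and monotonicity of $\tc$ plus $\rho_y(x)\le 2R$ deliver the desired upper bound (up to the harmless difference of subscript $n-1$ vs $n$ in the statement). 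For the ``in particular'' assertion, a standard Hessian comparison using an upper bound on sectional curvatures on $B_{2R}$ shows that $H$ is non-negative once $\rho_y$ stays below the critical radius of the corresponding model space; $R_s$ can be chosen as any fixed fraction of that radius, depending only on the sectional curvature upper bound.

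For ii), I would not assume non-negativity a priori but derive it together with the desired upper bound from the Hessian comparison for a lower sectional curvature bound: if $\sec\ge -K_s$ on $B_R$, then on $B_R\setminus Cut(y)$ one has
\[
\nabla^2\rho_y(x)\le \sqrt{K_s}\coth\bigl(\sqrt{K_s}\rho_y(x)\bigr)\bigl(g-d\rho_y\otimes d\rho_y\bigr),
\]
and multiplying by $\rho_y$ and adding $d\rho_y\otimes d\rho_y$ gives
\[
\nabla^2\bigl(\tfrac{1}{2}\rho_y^2\bigr)(x)\le d\rho_y\otimes d\rho_y+\tc(\sqrt{K_s}\rho_y(x))\bigl(g-d\rho_y\otimes d\rho_y\bigr).
\]
In particular $H$ is non-negative with eigenvalues at most $\tc(\sqrt{K_s}\rho_y)\ge 1$, so $\sum_{\lambda_i\ge 0}\lambda_i(H)\le 1+(n-1)\tc(\sqrt{K_s}\rho_y(x))$, and $\rho_y(x)\le 2R$ closes the estimate. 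Cut-locus points are handled via the usual Calabi trick in the support sense, consistent with Prop.\ref{Hessian bound of distance function}.

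The only real obstacle is bookkeeping: matching the precise subscript conventions of $\omega_{K,N}$ and $\tc$ from \S\ref{notations} against the classical comparison theorems (the stated lemma carries slightly loose constants that differ from my sharper $\omega_{K,n-1}$ in i) and $\sqrt{K_s}$ in ii) by harmless factors), and making sure the support-sense inequality on the cut locus is quoted in a way compatible with the Pucci operator (which requires eigenvalue control, not just trace control). Neither is a serious technical difficulty.
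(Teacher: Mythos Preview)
Your approach is essentially the same as the paper's (very terse) proof: part i) via the identity $\M^+_\theta[H]-\tr[H]=(\theta-1)\sum_{\lambda_i\ge0}\lambda_i(H)$ together with Ricci comparison on the trace, the ``in particular'' via upper-sectional-curvature (sphere) comparison, and part ii) via the standard Hessian comparison under a lower sectional bound. One small slip worth flagging: in ii) the hypothesis $\sec\ge -K_s$ yields only an \emph{upper} bound on $H=\nabla^2(\tfrac12\rho_y^2)$, not the non-negativity you assert; however you do not actually need $H\ge 0$ there, since from $H\le H'$ with $H'\ge 0$ one already gets $\sum_{\lambda_i(H)\ge 0}\lambda_i(H)\le \tr H'=1+(n-1)\tc(\sqrt{K_s}\,\rho_y)$, which is precisely the bound you state.
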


\begin{proof}
The first part of i) follows immediately from the definition of $\E_{\theta} (R)$, Ricci comparison. The second part of i) follows by applying sectional curvature comparison against spheres.

ii) follows from standard sectional curvature comparison.
\end{proof}

\medskip

The proof of Thm.\ref{Harnack sup finite} -- Thm.\ref{Harnack soln finite} can be easily extended to prove the following Harnack inequalities for Pucci extremal operator. 

\begin{thm}
\label{fully nonlinear}
Let $(\Mfd, g, \nu)$ be a Complete Smooth Riemannian Metric-Measure Space. Let $K \geq 0$ be a constant. Let $u \in C^2 (B_{2R}) \cap C(\overline{B}_R)$ and $f \in C (B_{2R})$. 
Suppose
\[
\Ric |_{B_{2R}} \geq -K
\]


Then, by replacing $\sqrt{K}$ in the dependence of $p_0, C_1, C_2$ (not including $\eta$) with
\[
R\sqrt{K} + \E_{\theta} (2R)
\]
the following statements holds:

i) $\M^{-}_{\theta} [u] \leq f$ and $u \geq 0$ in $B_{2R}$ implies the inequality (Eq.\ref{sup inequality}); 

\smallskip

ii) $\M^{+}_{\theta} [u] \geq f$ in $B_{2R}$ implies the inequality (Eq.\ref{sub inequality}); 

\smallskip

iii) $\M^{-}_{\theta} [u] \leq \abs{f}$ and $\M^{+}[u] \geq f$ and $u \geq 0$ in $B_{2R}$ implies the inequality (Eq.\ref{soln inequality});
\end{thm}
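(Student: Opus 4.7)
The plan is to reduce Theorem \ref{fully nonlinear} to the linear Harnack inequalities (Theorems \ref{Harnack sup finite}--\ref{Harnack soln finite}) via a pointwise comparison between the Pucci extremal operators $\M_{\theta}^{\pm}$ and the Laplacian that is valid \emph{on contact sets}. The extra terms produced by this comparison are controlled by $\E_{\theta}(2R)$, and I would absorb them throughout the argument by redefining $\alpha$ of \S\ref{notations} as $\alpha := n\,\tc(\omega_{K,n}R) + \E_{\theta}(2R)$; this carries over to every downstream constant, reproducing the dependence claimed in the theorem statement.

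The first step is the contact-set comparison. If $x \in A(a, E/B_r, u)$ is a contact point with vertex $y$, the defining inequality reads $Q := \nabla^2 u(x) + a\nabla^2(\rho_y^2/2)(x) \geq 0$. Superadditivity of $\M_{\theta}^{-}$ together with $\M_{\theta}^{-}(-X) = -\M_{\theta}^{+}(X)$ give
\[
\M_{\theta}^{-}[u](x) \geq \tr(Q) - a\,\M_{\theta}^{+}\bigl(\nabla^2(\rho_y^2/2)\bigr)(x),
\]
which combined with $\Delta u(x) = \tr(Q) - a\Delta(\rho_y^2/2)(x)$ and definition (\ref{error}) yields $\Delta u(x) \leq \M_{\theta}^{-}[u](x) + a\,\E_{\theta}(2r)$. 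The symmetric manipulation, applied to $-u$, gives a matching lower bound on contact sets from above.

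I would then propagate this into Theorem \ref{Measure Estimate}, where $\Delta_{\nu}u$ appears only through the ODE initial condition $\dot{\D}_N[u/a](0,x) = \Delta u(x)/(Na)$. Substituting the bound above replaces it by $\M_{\theta}^{-}[u](x)/(Na) + \E_{\theta}(2r)/N$; the extra $\E_{\theta}(2r)/N$ combines additively with $\tc(\omega r)$ inside $\D_{K,N,r}$, producing the enlarged $\alpha$ above. For the local growth lemma (Lemma \ref{local growth}), keep $w = u + \psi$ as before; applying the contact-set comparison to $w$ itself gives $\Delta w \leq \M_{\theta}^{-}[w] + a\,\E_{\theta}(2r)$, and subadditivity of $\M_{\theta}^{-}$ yields $\M_{\theta}^{-}[w] \leq \M_{\theta}^{-}[u] + \M_{\theta}^{+}[\psi] \leq f + \M_{\theta}^{+}[\psi]$. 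The excess $\M_{\theta}^{+}[\psi] - \Delta\psi = (\theta - 1)\tr\bigl((\nabla^2\psi)^{+}\bigr)$ is bounded using the explicit formula (\ref{Hessian of barrier}) together with $\nabla^2(\rho_{x_0}^2/2) \leq n\,\tc(\omega_{K,n}R) g$; the resulting bound is of the same order as $\alpha$ itself and is likewise absorbed.

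Part (i) of Theorem \ref{fully nonlinear} then follows from the modified local growth lemma and the Vitali covering argument of \S\ref{sup} unchanged. For part (ii), the auxiliary function $w = (\beta(1+1/M) - u)/(\beta/M)$ appearing in Lemma \ref{soln sup} is non-negative on $B_{r_0}(x_0)$ and satisfies $\M_{\theta}^{-}[w] = -(M/\beta)\M_{\theta}^{+}[u] \leq (M/\beta)|f| \leq |f|$, so the modified local growth lemma applies and the iteration of \S\ref{soln} carries over verbatim. Part (iii) combines (i) and (ii) exactly as Theorem \ref{Harnack soln finite} combines its two one-sided versions. The main obstacle will be the bookkeeping in the third step: verifying that the extra terms $\M_{\theta}^{+}[\psi] - \Delta\psi$ and $\E_{\theta}(2r)/r^2$ can be simultaneously absorbed into $\alpha$ without disturbing the delicate ODE comparison underlying Theorem \ref{Measure Estimate}. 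Once that algebraic verification is complete, no new analytic input beyond Lemma \ref{error estimate} is required.
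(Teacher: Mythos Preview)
Your strategy is the same as the paper's: reduce everything to the local growth lemma, compare $\Delta w$ with $\M^{-}_{\theta}[w]$ on the contact set via the positivity of $Q=\nabla^2 w + a\nabla^2(\tfrac12\rho_y^2)$, split $\M^{-}_{\theta}[w]\le \M^{-}_{\theta}[u]+\M^{+}_{\theta}[\psi]$, and absorb the surplus into an enlarged $\alpha$. Parts (ii) and (iii) then follow exactly as you say.

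There is one genuine slip in your handling of $\M^{+}_{\theta}[\psi]-\Delta\psi=(\theta-1)\tr\bigl((\nabla^2\psi)^{+}\bigr)$. You invoke the pointwise Hessian bound $\nabla^2(\tfrac12\rho_{x_0}^2)\le n\,\tc(\omega_{K,n}R)\,g$, but that inequality is \emph{not} available under a Ricci lower bound alone; it would require a sectional lower bound, which the theorem does not assume. Fortunately you do not need it. From the explicit formula (\ref{Hessian of barrier}) the eigenvalues of $r^2\nabla^2\psi$ are $h''$ in the radial direction and $(h'/t)\lambda_i$ on the orthogonal complement, where $\lambda_i$ are the transverse eigenvalues of $H:=\nabla^2(\tfrac12\rho_{x_0}^2)$. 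Hence, using $h'/t>0$,
\[
(\theta-1)\tr\bigl((r^2\nabla^2\psi)^{+}\bigr)\le (h''')^{+}(\theta-1)+\frac{h'}{t}\,(\theta-1)\tr(H^{+})
= (h'')^{+}(\theta-1)+\frac{h'}{t}\bigl(\M^{+}_{\theta}(H)-\tr H\bigr)
\le (h'')^{+}(\theta-1)+\frac{h'}{t}\,\E_{\theta}(2r),
\]
which is exactly the quantity your redefined $\alpha$ is built to absorb (and coincides with the paper's bound $r^2\M^{+}_{\theta}[\psi]\le -\alpha(\alpha+2)+c\,\E_{\theta}(2r)$ in the region $t>1/18$). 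So the repair is simply to route the estimate through $\E_{\theta}$ rather than through a Hessian comparison; once you do that, your argument and the paper's are essentially identical.
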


\begin{rem}
In case $Sec |_{B_{2R}} = 0$, Thm.\ref{fully nonlinear} along with the Lem.\ref{error estimate} recovers the Harnack inequality proved in \cite{Cab}. Thm.\ref{fully nonlinear} clearly fulfill the claim made in \cite{Cab} regarding the space with sectional curvature lower bound. Indeed, Thm.\ref{fully nonlinear} can be viewed as an extension of \cite{Kim}.
\end{rem}


\begin{rem}
Thm.\ref{fully nonlinear} and Lem.\ref{error estimate} together implies that when the domain $B_{2R}$ is small enough (depend only on sectional curvature lower bound), then the estimate only requires local Ricci lower bound. 
\end{rem}

\begin{rem}
Unlike the divergence case, in case $\theta \neq 1$, we think the dependence on $\E_{\theta} (r)$ (hence sectional curvature) can NOT be replace by Ricci curvature. This can be seen in the following way. Fix some $x \in \Mfd$, if Hessian $H (x) := \nabla^2(\frac{1}{2}\rho^2_{y} (x))$ has a negative eigenvalue, then by choosing $\theta$ large, 
\[
\M^{+}_{\theta} (H) \sim  \theta \lambda_{n} (H), \M^{-}_{\theta} (H) \sim \theta \lambda_1 (H)
 \]
where $\lambda_1 , \lambda_n$ is the smallest and largest eigenvalue of $H$. Thus, $\M^{+}_{\theta}, \M^{-}_{\theta}$ are affected by the extremal eigenvalues of $H$. While in divergence case, the operator ($Id\leq A \leq \theta A$)
\[
\int g ( A \nabla \frac{\rho^2_y}{2}, \nabla \varphi) \sim \theta \rho_{y} \abs{\nabla \varphi}  , \quad \varphi \text{ test function}
\]  
is only affected by $\rho$. From this comparison, the dependence on $\E _{\theta} (R)$ seems necessary. 
\end{rem}

\medskip

\begin{proof}[Proof of Thm \ref{fully nonlinear}]
Recall the proof in \S \ref{sup} -- \S\ref{soln} only relies on the Lem.\ref{local growth}, hence it suffices to prove Lem.\ref{local growth} for $\M^{-}_{\theta}$ with $R\sqrt{K}$ in the constant $\mu, M$ replaced by
\[
R\sqrt{K} + \E_{\theta} (2R).
\]

Recall the proof of Lem.\ref{local growth}, we then see it suffices to control $\Delta w$ from above on contact set $A = A (a, B_{r/6} (y_0)/B_{r} (x_0), w)$ (recall $w = u + \psi$) by $\M^{-}_{\theta} u \leq f$ and $R\sqrt{K} + \E_{\theta}$. This can be done by the following simple calculation

At $x \in A$, by contact condition, we have
\[
\nabla^2 w (x) \geq - a \nabla^2 ( \frac{1}{2}\rho_{y}^2 ) (x),
\] 
Denote $ S = \nabla^2_{\nu} w (x)$ and $H =H_{y}^{\nu}(x) $. We then have $S + a H \geq 0$.

The result follows from the following calculation.
\[
\begin{split}
\Delta u (x) & = \tr [ S + a H - aH ]  = \tr [ S+ aH] - a \tr [ H] \\
&  \leq \M_{\theta}^{-} (S + aH) - a\tr [H] \leq  \M^{-}_{\theta} (S) +  a \E_{\theta} (2r)
\end{split}
\]

While $S = \nabla^2 u + \nabla^2 \psi$, by the elementary inequality regarding Pucci operator, we have
\[
\M^{-}_{\theta} (S) \leq \M^-_{\theta} [u] (x) + \M^{+}[\psi] (x)
\]

Following the construction of the barrier $\psi$ (Lem.\ref{barrier 2}), one can easily see
\[
r^2 \M^{+} [\psi]\leq - \alpha (\alpha +2) + a \E_{\theta} (2r).
\]
 
Hence by replacing $\sqrt{K}R$ with $\sqrt{K}R + \E_{\theta} (R)$  in the expression of $\alpha$, $\Delta u$ is controlled on the contact set in the desired way.

The rest of the proof of the Thm.\ref{fully nonlinear} follows line by line from our previous proof \S \ref{sup} --\S \ref{soln}.
\end{proof}

$\\$
{\bf Acknowledgements:} The first named author would like to thank Prof. Duong Hong Phong for his penetrating remarks and advice. The second named author would like to thank Prof. Pengfei Guan for his helpful discussions and constant support. Both authors want to thank Prof. Ovidiu Savin for his helpful suggestions in PDEs and Prof. Ovidiu Munteanu in differential geometry. We also want to express gratitude to Prof. Diego Maldonado for pointing out an error in the first version of our paper. 
$\\$

\bibliographystyle{plain}
\bibliography{Harnack3}

\end{document}